\DeclareMathOperator*{\argmin}{argmin}
\newcommand{\bq}{\begin{equation}}
\newcommand{\eq}{\end{equation}}
\newcommand{\bbP}{{\mathbb P}}
\newcommand{\R}{\mathbb{R}}
\newcommand{\N}{\mathbb{N}}
\newcommand{\Prob}{\mathbb{P}}
\newcommand{\cA}{\mathcal{A}}
\newcommand{\cB}{\mathcal{B}}
\newcommand{\cC}{\mathcal{C}}
\newcommand{\cD}{\mathcal{D}}
\newcommand{\fkf}{\mathfrak{f}}
\newcommand{\myin}{\Omega_n}
\newcommand{\myout}{\Omega^c_n}
\newcommand{\fX}{\mathfrak{X}}
\newcommand{\TheShortTitle}{AdaVar for Global Optimization}
\newcommand{\TheAuthors}{B.\ Engquist, K.\ Ren and Y.\ Yang}
\begin{document}

\title{An Algebraically Converging Stochastic Gradient Descent Algorithm for Global Optimization\thanks{Received date, and accepted date (The correct dates will be entered by the editor)}}

\author{
  Bj\"orn Engquist\thanks{Department of Mathematics and the Oden Institute, The University of Texas, Austin, TX 78712;
  \texttt{engquist@oden.utexas.edu}} 
   \and Kui Ren\thanks{Department of Applied Physics and Applied Mathematics, Columbia University, New York, NY 10027; \texttt{kr2002@columbia.edu}}
  \and Yunan Yang\thanks{Department of Mathematics, Cornell University, Ithaca, NY 14850;
 \texttt{yunan.yang@cornell.edu}} 
}

\pagestyle{myheadings}
\markboth{\TheShortTitle}{\TheAuthors} 
\maketitle
         
\maketitle

\begin{abstract}
We propose a new gradient descent algorithm with added stochastic terms for finding the global optimizers of nonconvex optimization problems. A key component in the algorithm is the adaptive tuning of the randomness based on the value of the objective function. In the language of simulated annealing, the temperature is state-dependent. With this, we prove the global convergence of the algorithm with an algebraic rate both in probability and in the parameter space. This is a significant improvement over the classical rate from using a more straightforward control of the noise term. The convergence proof is based on the actual discrete setup of the algorithm, not just its continuous limit as often done in the literature. We also present several numerical examples to demonstrate the efficiency and robustness of the algorithm for reasonably complex objective functions.
\end{abstract}

\begin{keywords}
stochastic gradient descent, global optimization, algebraic convergence
\end{keywords}

\begin{AMS}
90C26, 90C15, 65K05
\end{AMS}

\section{Introduction}\label{sec:intro}

The gradient descent method has long been used for the optimization of real-valued functions. For minimization of function $f(x)\in C^1$ and $f:\R^d \mapsto \R$, the algorithm takes the form
\begin{equation}\label{eq:GD101}
X_{n+1} = X_n -\eta_n G(X_n),
\end{equation}
where $G(x) = \nabla f(x)$, $\eta_n$ is the step size, and $\{X_n\}$ are the iterates. Under proper conditions, the gradient descent algorithm converges to a local minimum within the basin of attraction. A sufficient convergence condition for strongly convex functions is $\eta_n = \eta < 2/L$, where $L$ is the Lipschitz constant for $G(x)$. We will also satisfy this condition in the stochastic setting proposed below. When applied to general functions with many local minima, there is no guarantee of convergence to a global minimum through gradient descent~\eqref{eq:GD101}.

For about half a century, an additional random term has been proposed for~\eqref{eq:GD101} to achieve global convergence for nonconvex functions. It results in a stochastic gradient descent algorithm
\begin{equation}\label{eq:SGD101}
    X_{n+1} = X_n -\eta_n G(X_n) + \sigma_n  \psi_n,
\end{equation}
where $\{\psi_n\}$ are random variables following a certain distribution, and we will assume $\psi_n \sim \mathcal{N}(0,I)$, the standard $d$-dimensional Gaussian, hereafter. The standard deviation $\sigma_n$ controls the strength of the noise. Equation~\eqref{eq:SGD101} represents the type of algorithm we focus on in this paper.

There are also many stochastic gradient descent techniques where the function and its gradient are intrinsically stochastic or where the gradient is stochastic from sampling, as is often the case in machine learning, e.g.,
\begin{equation}\label{eq:MLSGD101}
    X_{n+1} = X_n -\eta_n G(X_n;\xi_n),
\end{equation}
where $G(x;\xi)$, parameterized by the random variable $\xi$, is a stochastic approximation to the true gradient $G(x)$. %
In this paper, we do not address algorithms on the common form~\eqref{eq:MLSGD101} but will provide a numerical example in~\Cref{sec:numerics} where the adaptive variance is achieved by varying the batch size and offer a brief discussion on it in~\Cref{sec:conclusion} under future directions. We also mention that the form~\eqref{eq:SGD101} is sometimes used in the analysis of algorithm~\eqref{eq:MLSGD101} with $G(X_n)$ as the expected value of $G(X_n, \xi_n)$.

In an algorithm of the form~\eqref{eq:SGD101}, which is the focus of this work, there is an obvious dilemma. The sequence $\{\sigma_n\}$ has to converge to zero as $n$ increases even for convergence applied to convex functions. On the other hand, if $\sigma_n$ decreases too fast, the convergence will be to a local minimum and not the global one. This is analyzed in the classical papers~\cite{geman1986diffusions,chiang1987diffusion,kushner1987asymptotic,hwang1990large,gelfand1991recursive} starting from the late 1980s, and the critical decay rate is $\mathcal{O}(\frac{1}{\sqrt{\log n}})$. The analyses were mainly based on the asymptotic limit as a stochastic differential equation related to the scaling,
\begin{equation}\label{eq:diffdisc}
    X_{n+1} = X_n - \eta_n G(X_n) +  \sqrt{2\eta_nT_n} \psi_n,
\end{equation}
where $T_n\in\R^+$ is often referred to as the temperature parameter.

To improve on this slow convergence rate, we propose an algorithm with an adaptive \textit{state-dependent} $\sigma_n$,
\begin{equation}\label{eq:diffdisc_new}
    X_{n+1} = X_n - \eta_n G(X_n) +  \sigma_n \left(f(X_n) \right) \psi_n.
\end{equation}
This type of algorithm with an adaptive variance, which we label as ``AdaVar'' for future reference, can have a larger variance away from the minimum to explore the overall landscape and avoid trapping in a local minimum. It can then have a smaller variance for lower $f$-values, typically closer to the minimum. We will here use the simple form of the variance below based on the cutoff scalar value $f_n$ with one variance for $f$-values above $f_n$ and a smaller one for the case below the cutoff $f_n$. The sequence of scalar values $\{f_n\}$ are chosen by the user to implement the algorithm. That is,
\begin{equation}\label{eq:two-stage-sigma}
 \sigma_n( f(X_n) ) =    
 \begin{cases}
  \sigma_n^-, & f(X_n) \leq f_n,\\
   \sigma_n^+, & f(X_n) > f_n.
    \end{cases}
\end{equation}
We also assume that $f$ has a Lipschitz continuous gradient and a unique global minimum. Furthermore, there should be an open set containing the global minimum where $f$ is strongly convex. Given a current iterate $X^{\text{c}}$ where $f(X^\text{c}) > f_n$, the law of the following iterate $X^{\text{f}}$ is a normalized Gaussian given a bounded search domain, centered at $X^{\text{c}} - \eta_n G(X^{\text{c}})$, with the standard deviation $\sigma_n^+$. Our convergence theory in Section~\ref{sec:main} is based on a limiting case where $\sigma_n^+ \rightarrow \infty$ for all $n$ for which the law of the following iterate $X^{\text{f}}$ becomes the uniform distribution. Our numerical illustration in Section~\ref{sec:numerics} is based on~\eqref{eq:two-stage-sigma}.

Our main goal is to prove that there exists a stochastic gradient descent algorithm of this type, which has a much faster algebraic convergence rate in both probability and space than the classical logarithmic rate; see Theorem~\ref{thm:main}  below. We also prove that this algebraic rate in probability is optimal; see Proposition~\ref{prop} below. We want to stress that the convergence proof is for the true discrete algorithm and not just for the continuous limit via stochastic differential equations, which is often the case in the literature. To have a clean proof, we assume certain properties of the objective function to be known, for example, the volume of the sub-level sets. Since that is not always available, we describe in~\Cref{sec:extension} how these properties can be estimated during the optimization process. We further show that the exact properties are not required.

There are numerous similar strategies for global convergence, and many of them use multiple sequences of $X_n$ values~\cite[Section 1]{zhigljavsky2007stochastic}. In the simulated annealing algorithm~\cite{kirkpatrick1983optimization}, all new points that lower the objective function are accepted, and, with a certain probability, points that raise the objective function value are also accepted. For simulated annealing, the global convergence (in probability)  is achieved as long as the iterates can visit the entire search domain, and can be proved for the discrete algorithm~\cite{locatelli1996convergence}. In parallel tempering and its variants, such as the replica exchange method~\cite{dong2021replica}, Metropolis type of acceptance-rejection strategies are used to alternate between different sequences of $X_n$. 
When the multimodality of the objective function is mild, methods known as ``multistart''~\cite[Section 2.6]{zhigljavsky2007stochastic} and ``random initialization''~\cite{chen2019gradient} can be efficient in finding the global minimum. Stochastic gradient descent with noise of machine
learning type can achieve exponential convergence to the global minimum for objective functions satisfying certain conditions~\cite{wojtowytsch2023stochastic}.

In consensus-based optimization~\cite{carrillo2021consensus,totzeck2021trends}, a group of particles explore the optimization landscape simultaneously under some stochastic influence and then build a consensus at the position of the weighted mean that is located near the global minimizer. Swarm-based gradient descent methods with randomness can also handle non-convex optimization problems~\cite{lu2022swarm,tadmor2023swarm}, in which agents communicate and then change both the mass and the location to achieve both exploration and exploitation. There has also been work on global optimization using radial basis function interpolation from a history of carefully sampled iterates~\cite{bull2011convergence}. In a recent work~\cite{li2023convergence}, an algebraic convergence in both space and probability is achieved by using an $\epsilon$-greedy algorithm to obtain the iterates. In classical genetic algorithms (see the nice book~\cite{yang2020nature} for several examples), global convergence is achieved by random seeding such that one of the locally converging sequences will converge to the global minimum. Optimal control problems aiming at selecting optimal tempering schemes to reduce computational cost have also been studied~\cite{gao2020state}. The algorithm proposed here can also be used in parallel but is powerful even as a single sequence method, as seen in~\Cref{sec:example1_10D}. The basin of attraction for the global minimum has a volume proportionally $10^{-7}$ of the volume of the entire domain, but the implicit seeding from the AdaVar algorithm practically converges in $\mathcal{O}(10^4)$ iterations, which is much faster than what random seeding could give.

The rest of the paper is organized as follows. The mathematical setup and the main result of the paper are presented in~\Cref{sec:main}. We then prove the main theorem in~\Cref{sec:proof} through several lemmas. In~\Cref{sec:extension}, we discuss algorithmic details in terms of the implementation of the proposed algorithm and several key parameter estimations. In~\Cref{sec:numerics}, we present numerical examples illustrating the effectiveness of adaptive state-dependent variance. This includes a few illustrations that are not mathematically analyzed in the current study. One such instance pertains to the kind of optimization frequently seen in machine learning. Meanwhile, another focuses on optimization without gradients, a topic that is thoroughly examined in a separate work~\cite{engquist2023adaptive}. These extensions are further discussed as part of~\Cref{sec:conclusion}.

\section{Statement of Main Result}\label{sec:main}

While the decay $\mathcal O(\frac{1}{\sqrt{\log n}})$ in $\sigma_n$ in~\eqref{eq:SGD101} ensures the convergence of the iterates $X_n$ to the global minimizer $x^*$ in probability~\cite{hwang1990large}, it only gives a logarithmic convergence rate.
In this paper, we propose a new SGD algorithm with an \textit{algebraic} rate of convergence. The faster convergence here is achieved by having an adaptive and state-dependent noise term in the form of~\eqref{eq:diffdisc_new}.

Let us consider a nonconvex objective function $f:\R^d  \mapsto \R$, $f\in C^1$ and $ G(x) = \nabla f(x)$. We have the following additional assumptions for the function $f(x)$.
\begin{enumerate}[label=\textbf{A\arabic*}]
  \item \label{itm:A1} On a bounded open set $\Omega_{sc} \subset \R^d$,  $f \in C^2(\Omega_{sc})$ is strongly convex, and $b_1 I \preceq \nabla^2 f(x) \preceq b_2 I$, where $0< b_1 \leq b_2 <\infty$, $x\in \Omega_{sc}$, and $I\in\R^{d\times d}$ is the identity matrix;
  \item \label{itm:A2} $f(x)$ has a unique global minimizer $x^*$ that lies strictly inside $\Omega_{sc}$ and $f(x^*) = \fkf^*$. 
\end{enumerate}
We follow the two-stage adaptive variance~\eqref{eq:two-stage-sigma} and consider the following discrete algorithm for minimizing $f$. Let $\mathfrak{X}\subset \R^d$ be a non-empty compact set such that $\Omega_{sc} \subseteq \mathfrak{X}$. Given any $X_0 \in \mathfrak{X}$, we study an iterative optimization process for $n\geq 1$,
\begin{equation}\label{eq:diffdisc1}
    X_{n+1} = \begin{cases}  X_n - \eta_n G(X_n) + \sigma_n \psi_n, & X_n \in \Omega_n,\\
    \phi_{n}, &X_n  \in \Omega^c_n= \R^d \setminus \Omega_n, \end{cases}
\end{equation}
where $\sigma_n\in \R^+$ is the standard deviation of the Gaussian random term, and $\{\phi_{n}\}$ are i.i.d.\ random variables following $\mathcal{U}(\mathfrak{X})$, the uniform distribution on $\mathfrak{X}$. As mentioned in Section~\ref{sec:intro}, the random schedule in~\eqref{eq:diffdisc1} conditioned on a fixed $X_n$ is the limiting case of~\eqref{eq:two-stage-sigma} if $\sigma_n^+ \rightarrow \infty$, but it may not be the limiting dynamics of~\eqref{eq:diffdisc_new}. The random schedule~\eqref{eq:diffdisc1} shares the flavor of a stochastic gradient
descent algorithm with random restarts in the domain triggered by the objective function's value.

The subset $\Omega_n\subset \R^d$, $\forall n\in \N$, is the ${f}_n$-sublevel set of function $f(x)$ defined as
\begin{equation}\label{eq:omega_n_and_f_n}
    \Omega_n = \{x\in \R^d: f(x) \leq {f}_n\},
\end{equation}
where the real sequence $\{f_n\}$ monotonically decreases as $n$ increases with $f_n\xrightarrow{n\rightarrow \infty} f(x^*)$. Note that the sublevel sets satisfy $\Omega_{n_1} \subseteq \Omega_{n_2}$ if $n_1\geq n_2$. Without loss of generality, we assume that $\Omega_0\subset \mathfrak{X}$ for the chosen starting iterate $X_0$, and consequently, we have $\Omega_n\subset \mathfrak{X},\,\forall n\in\mathbb{N}$. The set $\mathfrak X$ can be considered as the target search domain as~\eqref{eq:diffdisc1} enforces the iterates $\{X_n\}$ to always come back to $\mathfrak X$. 

Let us emphasize that~\eqref{eq:diffdisc1} is simply the limiting case of~\eqref{eq:diffdisc_new} together with the two-stage adaptive variance~\eqref{eq:two-stage-sigma} when $\sigma_n^+$ is set to be $+\infty$.

We will use the following notations consistently throughout the paper. Given a measurable set $A\subset \R^d$ with a nonzero Lebesgue measure, i.e., $|A|> 0$, 
\begin{align}
     X_n^+ &:= X_n -\eta_nG(X_n), \nonumber \\ %
   k_n(y,x) &:=  \left(\sqrt{2\pi } \sigma_n\right)^{-d}    \exp \bigg(-\frac{|y-x^+ |^2}{2\sigma^2_n}\bigg),\quad x^+ =  x -\eta_n G(x), \label{eq:kernel} \\    
  p_{n}(A) &:= \Prob ( X_{n+1} \in A |X_n\in\myin ) = \frac{\int_{\Omega_n} \left( \int_A k_n(y,x) dy \right) d\mu_n(x) }{\int_{\Omega_n} d\mu_n(x)} , \label{eq:p_n} \\
       q_{n}(A) &:= \Prob ( X_{n+1}\in A |X_n\in\myout ) = \frac{|A\cap \mathfrak{X}|}{|\mathfrak{X}|}, \label{eq:q_n}
\end{align}
where $\mu_n$ is the probability distribution associated with the random variable $X_n$. Thus, $p_n$ and $q_n$ represent the conditional probability given $X_n\in\myin$ and $X_n\in\myout$, respectively. 

The pinnacle of the proposed algorithm is how we control the decay rates of $\sigma_n$ and $|\Omega_n|$ simultaneously such that
\begin{enumerate}
    \item $\sigma_n\rightarrow 0$ as $n\rightarrow 0$; That is, the noise variance upon the trusted region decreases.
    \item $|\Omega_n|\rightarrow 0$ as $n\rightarrow \infty$, due to the monotonic decrease of ${f}_n$.
\end{enumerate}
We add a few more assumptions that concretely determine the iteration in~\eqref{eq:diffdisc1}, based on which the main theorem holds. Let $c_0$ be the radius of a ball in $\R^d$ with unit volume.
\begin{enumerate}[label = \textbf{B\arabic*}]
\item \label{itm:B1}The step size in~\eqref{eq:diffdisc1} is a constant where $\eta_n = \eta < 2/b_2$.
\item \label{itm:B2} For $n\geq 1$, ${\sigma}_{n} = \dfrac{c_0 \sqrt[d]{|\Omega_n|}}{\sqrt{ \log n}}$ where $\Omega_n$ depends on $f_n$ based on~\eqref{eq:omega_n_and_f_n}. %
\item \label{itm:B3} The scalar value $f_0$ is chosen such that $\Omega_0 \in \frak{X}$ and the sequence $\{f_n\}$ is chosen such that $|\Omega_{n+1}| = |\Omega_0| n^{-\alpha}$, $\forall n\geq 1$, where $0<\alpha <  \alpha^* = \frac{(c^*)^2}{2}$, $c^* = \frac{2\eta b_2 - \eta^2 b_2^2}{4} \left( \frac{b_1}{b_2}\right)^{\frac{3}{2}}$; $|\Omega_1| = |\Omega_0|$.

\end{enumerate}
We remark that if $f_n \equiv f_0$ for all $n$, we have $\Omega_n \equiv \Omega_0$. Consequently, \ref{itm:B2} becomes the classical decay rate $\sigma_n = \mathcal{O} (\frac{1}{\sqrt{\log n}})$. We used a constant step size $\eta = 1/b_2$ in the proofs for simplicity.

Our proposed algorithm~\eqref{eq:diffdisc1} also reduces to the classical case~\eqref{eq:diffdisc} for which the $\mathcal{O} (\frac{1}{\sqrt{\log n}})$ decay rate is optimal to guarantee global convergence~\cite{geman1986diffusions,kushner1987asymptotic,hwang1990large,hu2019diffusion}. Besides, if $\eta = 1/b_2$, the upper bound $\alpha^*$ for $\alpha$ in~\ref{itm:B3} is maximized. We also stress that the formula in~\ref{itm:B3} does not add any extra constraints on the objective function $f(x)$. It is only used to define the sequence $\{f_n\}$. In~\Cref{sec:extension}, we will discuss several strategies to estimate $\{f_n\}$ if unknown a priori.

We are ready to state the main theorem of this paper in the form of a concentration inequality.
\begin{theorem}\label{thm:main}
Let \ref{itm:A1}-\ref{itm:A2} and \ref{itm:B1}-\ref{itm:B3} hold and the iterates $\{X_n\}$ follow the stochastic gradient descent algorithm~\eqref{eq:diffdisc1}. 
Then, $\exists N\in \N$ such that $\forall n\geq N$,
\[
\Prob \left( |X_n-x^*|  > \mathfrak{c}_1 n^{-\alpha/d} \right) \leq \mathfrak{c}_2 n^{\alpha-\alpha_2},
\]
where $\alpha_2 = \frac{\alpha}{2} + \frac{(c^*)^2}{4}$. Here, $\mathfrak{c_1}, \mathfrak{c_2}$ are constants that depend on $b_1$ and $b_2$.
\end{theorem}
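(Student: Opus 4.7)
The strategy is to track $\pi_n := \Prob(X_n \in \Omega_n)$ and show $1-\pi_n \to 0$ at the stated algebraic rate, then convert this into a bound on $|X_n - x^*|$ via the diameter of $\Omega_n$. Because $|\Omega_n| = |\Omega_0| n^{-\alpha} \to 0$ and $x^*$ lies strictly inside the open set $\Omega_{sc}$, there exists $N_0$ such that $\Omega_n \subset \Omega_{sc}$ for all $n \geq N_0$. Inside $\Omega_{sc}$, the Hessian sandwich \ref{itm:A1} gives
\[
B\!\bigl(x^*,\sqrt{2(f_n-\fkf^*)/b_2}\bigr) \;\subseteq\; \Omega_n \;\subseteq\; B\!\bigl(x^*,\sqrt{2(f_n-\fkf^*)/b_1}\bigr),
\]
so $|\Omega_n|^{1/d}$ and $\sqrt{f_n - \fkf^*}$ are comparable up to constants depending only on $b_1, b_2, d$. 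In particular, on the event $\{X_n \in \Omega_n\}$ one has $|X_n - x^*| \leq C(b_1,b_2)|\Omega_n|^{1/d} = O(n^{-\alpha/d})$, which will supply the spatial rate $\mathfrak{c}_1 n^{-\alpha/d}$.

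The second step is a one-step contraction of the gradient step. On $\Omega_{sc}$, combining $\eta < 2/b_2$ from \ref{itm:B1}, descent in $f$, and the Polyak--\L{}ojasiewicz bound $|\nabla f|^2 \geq 2b_1(f - \fkf^*)$ yields $f(X_n^+) - \fkf^* \leq \bigl(1 - \eta b_1(2 - \eta b_2)\bigr)(f(X_n) - \fkf^*)$; feeding this through the Step~1 sandwich produces a Euclidean contraction $|X_n^+ - x^*| \leq (1 - c^*)\,r_n$, where $r_n$ is the outer radius of $\Omega_n$ and $c^*$ is the explicit constant in \ref{itm:B3} (the factor $(b_1/b_2)^{3/2}$ is precisely the cost of converting $f$-level contraction into Euclidean contraction). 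The upshot is that $X_n^+$ already sits well inside $\Omega_{n+1}$, with a spare ``buffer'' of order $c^*|\Omega_n|^{1/d}$ before the Gaussian kick can push it back out.

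The escape probability $\epsilon_n := 1 - p_n(\Omega_{n+1})$ is then controlled by a Gaussian tail estimate: the buffer-to-noise ratio is
\[
\frac{c^*\,|\Omega_n|^{1/d}}{c_0\,|\Omega_n|^{1/d}/\sqrt{\log n}} \;=\; \frac{c^*}{c_0}\sqrt{\log n},
\]
so a $d$-dimensional Gaussian tail gives $\epsilon_n \leq C\, n^{-(c^*)^2/2}$, and crediting the per-step shrinkage $(n/(n+1))^{\alpha/d}$ of the target sublevel set (worth an extra factor $n^{-\alpha/2}$) sharpens this to $\epsilon_n \leq C\, n^{-\alpha_2}$ with $\alpha_2 = \alpha/2 + (c^*)^2/4$. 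Using \eqref{eq:p_n}--\eqref{eq:q_n} and total probability,
\[
\pi_{n+1} \;=\; \pi_n\bigl(1 - \epsilon_n\bigr) + (1 - \pi_n)\,\delta_n, \qquad \delta_n := |\Omega_{n+1}|/|\mathfrak{X}| = \Theta(n^{-\alpha}),
\]
equivalently $1-\pi_{n+1} \leq (1-\pi_n)(1 - \epsilon_n - \delta_n) + \epsilon_n$. Since \ref{itm:B3} forces $\alpha < (c^*)^2/2$, the driving term $\delta_n$ dominates $\epsilon_n$, and a standard steady-state comparison yields $1 - \pi_n \leq C\,\epsilon_n/\delta_n = O(n^{\alpha - \alpha_2})$. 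Combined with the Step~1 inclusion $\{|X_n - x^*| > \mathfrak{c}_1 n^{-\alpha/d}\} \subseteq \{X_n \notin \Omega_n\}$, the theorem follows.

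\textbf{Main obstacle.} The delicate point is producing \emph{both} exponents $\alpha/2$ and $(c^*)^2/4$ in $\alpha_2$ simultaneously; this requires cleanly splitting the buffer budget between the inherent sublevel-set shrinkage and the contraction-generated slack $c^*$, and applying the Gaussian tail to each piece individually rather than to their sum. Secondary care is required for the transient regime $n < N_0$ and for verifying that the driving term $\delta_n$ genuinely exceeds $\epsilon_n$ inside the linear recursion, which is precisely the role of the condition $\alpha < (c^*)^2/2$ in \ref{itm:B3}.
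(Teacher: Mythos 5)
Your plan follows essentially the same route as the paper: sandwich $\Omega_n$ between balls/ellipsoids via \ref{itm:A1} to convert $\{X_n\in\Omega_n\}$ into the spatial rate $\mathfrak{c}_1 n^{-\alpha/d}$; use the descent inequality $f(x^+)-\fkf^*\leq(1-2\eta b_1+\eta^2 b_1 b_2)(f(x)-\fkf^*)$ to show $x^+$ lands in $\Omega_{n+1}$ with a buffer $d_n(x)\geq c^*c_0|\Omega_n|^{1/d}$; bound the escape probability by a Gaussian (chi-squared) tail with the ratio $d_n/\sigma_n\geq c^*\sqrt{\log n}$; and close with the two-state recursion in which re-entry occurs with probability $\kappa n^{-\alpha}$, whose quasi-stationary ratio gives $O(n^{\alpha-\alpha_2})$. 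This is exactly the paper's Lemmas on $x^+\in\Omega_{n+1}$ and $d_n(x)/(c_0\sqrt[d]{|\Omega_n|})\geq c^*$, its bound $p_n(\Omega_{n+1}^c)\leq\beta_2 n^{-\alpha_2}$, and its recurrence $\Prob(A_{n+1}^c)\leq g(n)\Prob(A_n^c)+h(n)$; the only presentational difference is that the paper solves the variable-coefficient recurrence explicitly (with an integral comparison and L'Hospital) rather than invoking a ``steady-state comparison,'' a step you would still have to carry out since the coefficients are not constant.

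One piece of your reasoning is wrong, even though the inequality it is meant to produce is true. You claim the raw tail bound $\epsilon_n\lesssim n^{-(c^*)^2/2}$ is \emph{sharpened} to $n^{-\alpha_2}$ by ``crediting the per-step shrinkage $(n/(n+1))^{\alpha/d}$, worth an extra factor $n^{-\alpha/2}$.'' The shrinkage factor is $1+O(1/n)$ and yields no polynomial gain, and in any case $\alpha_2=\frac{\alpha}{2}+\frac{(c^*)^2}{4}$ is \emph{smaller} than $\frac{(c^*)^2}{2}$ whenever $\alpha<\frac{(c^*)^2}{2}$, so $n^{-\alpha_2}$ is a weaker bound than the plain tail estimate, not a stronger one. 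In the paper, $\alpha_2$ is simply a convenient exponent strictly between $\alpha$ and $\frac{(c^*)^2}{2}$, chosen so that the $(\log n)^{\frac{d}{2}-1}$ prefactor from the chi-squared tail in dimension $d\geq 3$ is absorbed for large $n$. Consequently the ``main obstacle'' you describe (splitting the buffer budget to generate the two exponents separately) does not exist; if you try to build the proof around that splitting you will be chasing a mechanism that is neither needed nor available. The correct fix is trivial: prove $\epsilon_n\leq C(\log n)^{\frac{d}{2}-1}n^{-(c^*)^2/2}$ and then weaken the exponent to $\alpha_2$ to swallow the logarithm, exactly as the paper does. A second, smaller caution: your Euclidean contraction $|x^+-x^*|\leq(1-c^*)r_n$ is only heuristic; the quantitative buffer constant $c^*$ requires the refined local ellipsoid sandwich near $x^*$ (the $b_\gamma$ estimates of the paper's Lemma on $x^+$), not just the global $b_1,b_2$ ball sandwich, because the ratio $(f_{n+1}-\fkf^*)/(f_n-\fkf^*)$ must be bounded below sharply enough to beat the descent factor.
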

The main goal here is to prove that there is an algorithm of AdaVar type with algebraic convergence rates. The algorithm depends on the volume of $\Omega_n$ as a function of $f_n$ and the parameters $b_1$ and $b_2$. From the proof in~\Cref{sec:proof} and the analysis in~\Cref{sec:extension}, it is clear that these estimates can be very rough, and we would still have algebraic convergence but potentially with a smaller $\alpha$.

\begin{remark}
We remark that there are two situations where the convergence in~\Cref{thm:main} can be improved to an almost-sure convergence.
\begin{itemize}
\item If we know  a value $f_{sc} \in \mathbb{R}$ such that 
    $\{x: f(x) \leq f_{sc} \} \subseteq \Omega_{sc}$, once an iterate $X_n$ satisfies $f(X_n) \leq f_{sc} $, this iterate has entered the global basin of attraction over which the objective function is strongly convex. We can then turn off the random component of the algorithm. This leads to the following variant of the algorithm in~\eqref{eq:diffdisc1}
\begin{equation*}  
     X_{n+1} = \begin{cases}  X_n - \eta_n G(X_n), & f(X_n) \leq f_{sc}  ,\\
     X_n - \eta_n G(X_n) + \sigma_n \psi_n, & f_{sc} < f(X_n) \leq f_n,\\
    \phi_{n}, &  f(X_n) >  f_n, \end{cases}
\end{equation*}
where $\Omega_n$ is the $f_n$-sublevel defined in~\eqref{eq:omega_n_and_f_n}.   
Once $X_n$ has entered the set $\{x\in \Omega: f(x) \leq f_{sc}\}$, the scheme consequently becomes a (deterministic) gradient descent method. Our assumption~\ref{itm:B1}  on the step size ensures the exponential convergence of $X_n$ to $x^*$ in terms of their Euclidean distance. Therefore, in the context of the proposal stochastic algorithm, we will have an almost-sure convergence to the global minimizer $x^*$.

\item  If we extract a different set of iterates $\{ \Tilde{X}_n \}$ where
\[
\Tilde{X}_n  = \argmin_{0\leq k \leq n} f(X_k),
\]
then $\Tilde{X}_n \rightarrow x^*$ almost surely due to Property~\ref{itm:E1} since it is an almost-sure event that at least one of the iterates from $\{X_n\}_{n=1}^\infty$ visits every set of nonzero Lebesgue measure.
\end{itemize}
Our main result (Theorem~\ref{thm:main}) does not work with either of the scenarios and has a convergence in probability result instead.
\end{remark}

There are three main goals in the rest of the paper. The first 
is to prove in~\Cref{sec:proof} the existence of the algebraically converging algorithm of this type as stated in~\Cref{thm:main}. This is done by an explicit algorithm under the assumption that we know the value of $f(x^*) = \fkf^*$, and the volume function of sub-level sets $\Omega^*(\fkf) = |\{x\in \mathfrak{X}|f(x)\leq \fkf \}|$, whose inverse exists. Given the volume $|\Omega_n|$, we can locate the cutoff values $f_n$ based on~\ref{itm:B3}.
After proving the results under the assumptions mentioned above, we consider the practical application of the algorithm and give guidance 
in~\Cref{sec:extension} for how to estimate $f(x^*)$ and the sub-level set volume function $\Omega^*$ (and more importantly, its inverse function), which we often do not know apriori. Third, we present several numerical examples in~\Cref{sec:numerics} to demonstrate results from the previous two parts.

In our proof of~\Cref{thm:main}, we mainly show that our algorithm, like many other algorithms in this category, achieves global convergence by balancing the ``exploration'' (visiting the entire search domain) and ``exploitation'' (concentrating around the global minimum) phases. More precisely, we show that the sequence $\{X_k\}_{k\ge 0}$ generated by the algorithm satisfies the following two properties:
\begin{enumerate}[label = \textbf{E\arabic*}]
    \item\label{itm:E1} (Exploration)  For any subset $\Omega \subset \fX$ with non-zero Lebesgue measure, one of the history iterates will visit $\Omega$ almost surely. That is, 
$$
\bbP\left(\bigcup_{k=0}^\infty \{X_k \in \Omega\} \right) = 1.
$$
    \item\label{itm:E2} (Exploitation) For any $\epsilon>0$, we have that
    $$p_n^\epsilon : = \bbP(|X_n - x^*| > \epsilon) \xrightarrow{n\rightarrow \infty} 0\,.$$
    \end{enumerate}
Moreover, we show that, for our algorithm, $p_n^\epsilon$ converges to $0$ at an algebraic rate. In our main proof of~\Cref{thm:main}, we proved the ``exploration'' (Property 1) in~\Cref{sec:prop1} and the ``exploitation'' (Property 2) in~\Cref{sec:prop2}. 

It turns out that, any stochastic optimization algorithm that achieves global convergence with only~\ref{itm:E1} and~\ref{itm:E2}, i.e., without utilizing more specific structures of the objective function $f$, can not have a convergence rate that is faster than $\mathcal{O}(n^{-1})$, as summarized in the following proposition.

\begin{prop}\label{prop}
     Let $\{X_k\}_{k\ge 0}$ be a sequence that satisfies~\ref{itm:E1} and~\ref{itm:E2}. Then
     \[
        n^{1+\gamma} p_n^\epsilon \xrightarrow{n\rightarrow \infty} +\infty, \qquad \forall \gamma>0\,.     
     \]
\end{prop}
\begin{proof}
Assume otherwise, then the sequence $\{p_n^\epsilon\}$ converges to $0$ faster than $\mathcal{O}(n^{-1})$. Therefore the series $\sum_{k=0}^{\infty } p_k^\epsilon$ converges. This means that $\exists N \in \mathbb{N}$ such that
\[
    \sum_{k=N}^{\infty } p_k^\epsilon   \le \frac{1}{2}\,.
\]
For this $N$, and a sufficiently small $\delta>0$, let $\Omega$ be a subset of $\fX \setminus \left( \cup_{k=0}^{N-1} B_\delta(X_k) \cup B_\epsilon(x^*) \right)$ where $B_\delta(X_k)$ (resp $B_\epsilon(x^*)$) is the open ball of radius $\delta$ (resp $\epsilon$) centered at $X_k$ (resp $x^*$). Then
\[
X_0,\ldots, X_{N-1} \not \in \Omega\,.
\]
For this particular choice of $\Omega$, we have that 
\begin{eqnarray*}
 \bbP\left(\bigcup_{k=0}^\infty \{X_k \in \Omega\} \right) & = & \lim_{n\rightarrow \infty} \bbP\left( \{X_{n} \in \Omega\} \cup\{ X_{n-1} \in \Omega\} \cup \ldots\cup   \{ X_{N} \in \Omega\}  \cup \ldots \cup \{ X_{0} \in \Omega\} \right)\\
 &=&  \lim_{n\rightarrow \infty} \bbP\left( \{X_{n} \in \Omega\} \cup\{ X_{n-1}\in \Omega\} \cup \ldots\cup \{ X_{N} \in \Omega\} \right)\\
 &\leq &
 \sum_{k=N}^{\infty} \bbP\left( \{ X_{k} \in \Omega\}  \right)\\
 &\leq &  \sum_{k=N}^{\infty}  \bbP\left( \{ X_{k} \not\in B_{\epsilon}(x^*)\}  \right)  = 
   \sum_{k=N}^{\infty}  p_k^\epsilon \le  \frac{1}{2}.
\end{eqnarray*}
This contradicts the  ``exploration'' condition in~\ref{itm:E1}. %
\end{proof}

\section{Convergence Proof}\label{sec:proof}

The proof of~\Cref{thm:main} is divided into three parts. In~\Cref{sec:prop1}, we first present several lemmas demonstrating that the iterates following algorithm~\eqref{eq:diffdisc1} will visit any subset of the domain of interest with a nonzero measure almost surely, under the assumptions in~\Cref{sec:main}. In~\Cref{sec:prop2}, we present another three lemmas to obtain the lower and upper bounds for the key conditional probability $p_n(\Omega_{n+1}^c)$ when $n$ is sufficiently large. Finally, in~\Cref{sec:main_proof}, we combine all the previous lemmas and prove that the iterates converge to the global minimizer in probability with concrete algebraic convergence rates.

\subsection{Property One: Full Coverage of the Entire Domain}\label{sec:prop1}

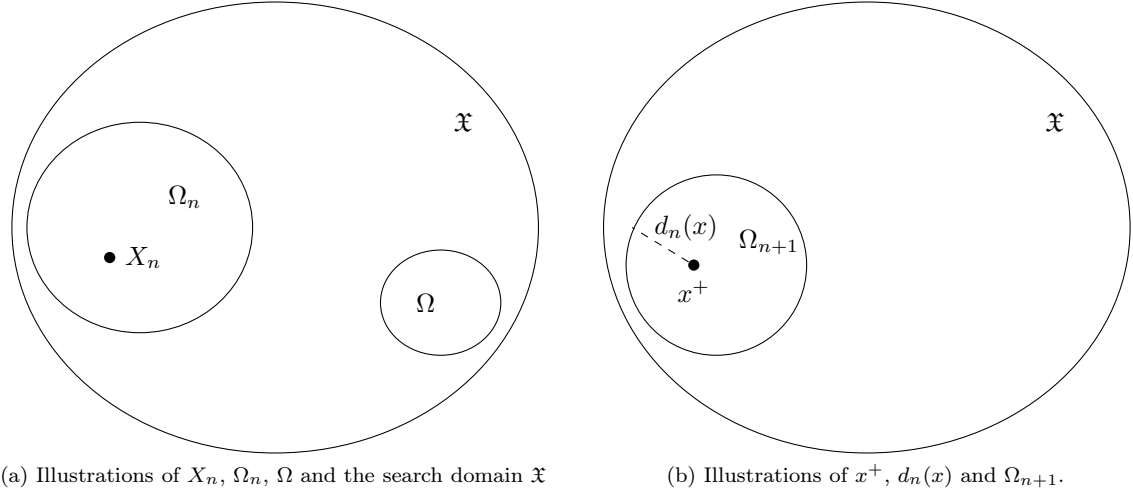
\begin{figure}
\centering
\subfloat[Illustrations of $X_n$, $\Omega_n$, $\Omega$ and the search domain $\mathfrak X$]{
\scalebox{0.8}{
\begin{tikzpicture}%
        \draw[] (0,0) ellipse (3.5 and 3);
        \draw[] (-1.8,0) ellipse (1.5 and 1.4);
        \draw[] (2.2,-1) ellipse (0.8 and 0.7);
		\node [] (7) at (-1.2, 0.4) {$\Omega_n$};
		\node [] (8) at (2.5, 1.4) {$\mathfrak{X}$};
        \node [] (12) at (2, -1.0) {$\Omega$};
        \draw [] (-2.2, -0.4) node[circle,fill,inner sep=1.5pt,label=right:$X_n$](a){}; 
\end{tikzpicture}} \label{fig:omega1} 
}
\hspace{0.3cm}
\subfloat[Illustrations of $x^+$, $d_n(x)$ and $\Omega_{n+1}$.]{
\scalebox{0.8}{
\begin{tikzpicture}%
		\draw [] (-2.3, -0.5) node[circle,fill,inner sep=1.5pt,label=below:$x^+$](a){}; %
        \draw [dashed] (a){}--node[above]{}(-3.12,0.0);
		\draw (0,0) ellipse (3.5 and 3);
		\draw (-2,-0.5) ellipse (1.2 and 1.2);
        \node [] (12) at (-1.3, -0.2) {$\Omega_{n+1}$};
        \node [] (12) at (-2.4, 0.0) {$d_n(x)$};
        \node [] (9) at (2.5, 1.4) {$\mathfrak{X}$};
\end{tikzpicture}} \label{fig:omega_n}
}
\caption{Illustrations for proofs of Property One in Section~\ref{sec:prop1} and Property Two in Section~\ref{sec:prop2}. The set $\Omega$ in (a) is used in all lemmas in Section~\ref{sec:prop1}. The distance $d_n(x)$ in (b) is defined in~\eqref{eq:d_n def}. 
}
\end{figure}

Given the initial iterate $X_0$, we are interested in finding the global minimum within the set $\mathfrak{X}$. For an arbitrary measurable set $\Omega \subseteq \mathfrak{X}$ where $|\Omega|\neq 0$, we use the following shorthand notations hereafter:
\begin{multicols}{2}
\begin{itemize}
    \item the event $X_{n} \in \Omega_n$ as $A_n$,
    \item the event  $X_{n} \not \in \Omega_n$ as $A_n^c$,
    \item the event $X_{n} \in \Omega$ as $B_n$,
    \item the event $X_{n} \not\in \Omega$ as $B_n^c$. 
\end{itemize}\end{multicols}
\noindent See~\Cref{fig:omega1} for illustrations of $\Omega$, $\Omega_n$ and related sets.  We start with the following lemma.

\begin{lemma}\label{lem:ball}
Given any $x\in\R^d$ and $\sigma, V \in\R^+$, let $\mathcal{M}_{V} = \{A \subseteq \R^d: |A| = V \}$. For any set $A\in\mathcal{M}_{V}$, we have
\begin{equation}\label{eq:ball}
  \left(\sqrt{2\pi } \sigma\right)^{-d}  \int_{A} \exp \bigg(-\frac{|y-x|^2}{2\sigma^2}\bigg) dy \leq   \left(\sqrt{2\pi } \sigma\right)^{-d} \int_{B(x;R)} \exp \bigg(-\frac{|y-x|^2}{2\sigma^2}\bigg) dy,
\end{equation}
where $B(x;R)\in \mathcal{M}_{V}$ is a ball centered at $x$ with radius $R = c_0 \sqrt[d]{V}$, $c_0 =  \pi^{-\frac{1}{2}}\sqrt[d]{\Gamma \left({\frac {d}{2}}+1\right)}$ being the radius for a ball of unit volume in $\R^d$.
\end{lemma}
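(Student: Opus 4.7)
The plan is to prove this via the classical layer-cake / rearrangement argument for a radially symmetric, radially decreasing density. The Gaussian kernel $g(y) := (\sqrt{2\pi}\sigma)^{-d}\exp(-|y-x|^2/(2\sigma^2))$ is a strictly decreasing function of $|y-x|$, so intuitively, to maximize $\int_A g$ subject to a volume constraint $|A|=V$, one should place the mass of $A$ where $g$ is largest, namely inside the ball $B(x;R)$ of volume $V$. The choice $R = c_0 \sqrt[d]{V}$ with $c_0 = \pi^{-1/2}\sqrt[d]{\Gamma(d/2+1)}$ is exactly what makes $|B(x;R)|=V$, by the standard formula for the volume of a Euclidean ball.

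The concrete proof I would write is a short symmetric-difference computation. I would set $B:=B(x;R)$ and decompose
\begin{equation*}
A = (A\cap B)\cup (A\setminus B),\qquad B = (A\cap B)\cup (B\setminus A).
\end{equation*}
Since $|A|=|B|=V$, the additivity of Lebesgue measure yields $|A\setminus B| = |B\setminus A|$. On $A\setminus B$ every point satisfies $|y-x|\geq R$, so $g(y)\leq g_\star := (\sqrt{2\pi}\sigma)^{-d}\exp(-R^2/(2\sigma^2))$; on $B\setminus A$ every point satisfies $|y-x|\leq R$, so $g(y)\geq g_\star$. Hence
\begin{equation*}
\int_{A\setminus B} g(y)\,dy \;\leq\; g_\star\,|A\setminus B| \;=\; g_\star\,|B\setminus A| \;\leq\; \int_{B\setminus A} g(y)\,dy.
\end{equation*}
Adding $\int_{A\cap B} g(y)\,dy$ to both sides gives $\int_A g \leq \int_B g$, which is exactly \eqref{eq:ball}.

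There is no real obstacle; the only points requiring a little care are measurability (the set $A$ is assumed measurable so $A\cap B$, $A\setminus B$, and $B\setminus A$ are all measurable) and the degenerate cases where $A\cap B$, $A\setminus B$, or $B\setminus A$ has measure zero, in which the argument is trivially the same or trivially true. I might also remark in one sentence that the inequality is a special instance of the Hardy--Littlewood rearrangement inequality applied to $\mathbf{1}_A$ and the radially decreasing function $g$, but the self-contained symmetric-difference proof above is shorter and sufficient for what is needed later in \Cref{sec:prop1}.
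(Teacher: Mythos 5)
Your proposal is correct and follows essentially the same route as the paper: the paper also splits $A$ and $B(x;R)$ along their intersection, bounds the integrand on $A\setminus B(x;R)$ above and on $B(x;R)\setminus A$ below by the value $e^{-R^2/(2\sigma^2)}$ (up to the normalization), and uses $|A\setminus B(x;R)| = |B(x;R)\setminus A|$ to conclude. No gaps.
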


\begin{proof}
First, recall that the volume of a $d$-dimensional ball of radius $R$ is $\frac {(\sqrt{\pi}R)^d}{\Gamma \left({\frac {d}{2}}+1\right)}$,
where $\Gamma(s)$ is the Gamma function. Thus, a $d$-dimensional ball of radius $R = c_0 \sqrt[d]{V}$ has volume $V$.

Next, we define a function $f(x,y;\sigma) =  \left(\sqrt{2\pi } \sigma\right)^{-d} e^{-\frac{|y-x|^2}{2\sigma^2}}$. Given any $A\in \mathcal{M}_{V}$, 
\begin{eqnarray}
    \int_{A} f(x,y;\sigma)dy & =&  \int_{A\cap B(x;R)} f(x,y;\sigma) dy +  \int_{A \setminus B(x;R)} f(x,y;\sigma) dy \nonumber \\
    &\leq&  \int_{A\cap B(x;R)} f(x,y;\sigma) dy +  |A \setminus B(x;R)| \max_{y\in A \setminus B(x;R)} f(x,y;\sigma) \nonumber\\
    &\leq& \int_{A\cap B(x;R)} f(x,y;\sigma) dy +  |A \setminus B(x;R)| e^{-\frac{R^2}{2\sigma^2}},\label{eq:lhs} \\
\int_{B(x;R)} f(x,y;\sigma)dy & =&  \int_{A\cap B(x;R)} f(x,y;\sigma) dy +  \int_{B(x;R) \setminus A} f(x,y;\sigma) dy  \nonumber\\
&\geq& \int_{A\cap B(x;R)} f(x,y;\sigma) dy +  |B(x;R) \setminus A| \min_{y\in B(x;R) \setminus A} f(x,y;\sigma) \nonumber\\
&\geq& \int_{A\cap B(x;R)} f(x,y;\sigma) dy +  |B(x;R) \setminus A| e^{-\frac{R^2}{2\sigma^2}}.\label{eq:rhs}
\end{eqnarray}
Since $|B(x;R)\setminus A| = |A \setminus B(x;R)|$ as a result of $|B(x;R)|=|A| = V$, we conclude with~\eqref{eq:ball} by combining~\eqref{eq:lhs} with~\eqref{eq:rhs}.
\end{proof}

Next, we obtain the following key lemma, which shows that we can ``restart'' and ``forget'' the history of $X_{n-1}, \ldots, X_0$ once the $n$-th iterate $X_n \in \Omega_n^c$.

\begin{lemma}\label{lem:cond_history1}
Consider random variables $\{X_n\}$ generated by the stochastic algorithm~\eqref{eq:diffdisc1}. For any $0\leq n < m <\infty$, and  arbitrary measurable sets $S_{n},\ldots,S_{m-1}, S_{m} \subseteq \mathfrak{X}$ where $S_{m-1}\subseteq \Omega_{m-1}^c$, the conditional probability 
\begin{equation}
    \bbP\left(X_{m} \in S_{m} |   X_{m-1} \in S_{m-1}, \cdots, X_{n} \in S_{n}\right) = \bbP\left(X_{m} \in S_{m} |  X_{m-1} \in S_{m-1}\right) = \frac{|S_m|}{|\fX|}\,.
\end{equation}
\end{lemma}
\begin{proof}
    The main algorithm in~\eqref{eq:diffdisc1} yields a discrete-time Markov chain on a measurable state space $\mathfrak{X}$. The joint probability is given by
    \begin{eqnarray}
&& \bbP\left( X_{n} \in S_{n},\, X_{n + 1} \in S_{n+1},\, \cdots, \, X_{m} \in S_{m}\right) \\
&=& \int_{S_n}\int_{S_{n+1}} \cdots \int_{S_{m-1}}\int_{S_{m}} p_{m-1}(x_{m-1}, dx_m)\,  p_{m-2}(x_{m-2}, dx_{m-1})\, \cdots \, p_n(x_{n}, dx_{n+1}) \mu_n(dx_n)\,, \nonumber
\end{eqnarray}
where $p_n$ is the Markov kernel from $X_n$ to $X_{n+1}$ and $\mu_n$ is the associated probability measure for $X_n$. Based on the particular bi-level variance in~\eqref{eq:diffdisc1}, the Markov kernel $p_n$ has an analytic form
\begin{equation}\label{eq:transition_kernel}
    p_n(x_{n}, dx_{n+1})   = \left( \mathbbm{1}_{x_n \in \Omega_n}  
     k_n(x_{n+1}, x_{n} ) + 
    \mathbbm{1}_{x_n \not \in \Omega_n}  \frac{1}{|\mathfrak X|}
     \right) dx_{n+1}\,,\, \forall n\,,
\end{equation}
where $k_n$ is defined in~\eqref{eq:kernel}, i.e.,
\begin{equation*}
    k_n(x_{n+1}, x_{n} )  = \left(\sqrt{2\pi } \sigma_n\right)^{-d}    \exp \bigg(-\frac{|x_{n+1}-x_n^+ |^2}{2\sigma^2_n}\bigg),\quad x_n^+ =  x_n -\eta_n G(x_n).
\end{equation*}
In particular, if $S_{m-1} \subseteq \Omega_{m-1}^c$, which implies that $x_{m-1} \not\in \Omega_{m-1}$, we have that
\[
p_{m-1}(x_{m-1}, dx_m) =  |\mathfrak X|^{-1} dx_m.
\]
Therefore, the conditional probability 
\begin{eqnarray}
&&\bbP\left(X_{m} \in S_{m} \,|\, X_{n} \in S_{n},\, X_{n + 1} \in S_{n+1},\, \cdots, \, X_{m-1} \in S_{m-1} 
\right)  \nonumber \\
&=& \frac{ \bbP \left( X_{n} \in S_{n},\, X_{n + 1} \in S_{n+1},\, \cdots, \, X_{m-1} \in S_{m-1},\, X_{m} \in S_{m} 
\right) }{\bbP\left( X_{n} \in S_{n},\, X_{n + 1} \in S_{n+1},\, \cdots, \, X_{m-1} \in S_{m-1}
\right) }  \label{eq:cond_expand} \\
&=& \frac{\int_{S_n}\int_{S_{n+1}} \cdots \int_{S_{m-1}} \left(\int_{S_{m}} 
|\mathfrak X|^{-1}  dx_m \right)\,  p_{m-2}(x_{m-2}, dx_{m-1})\, \cdots \, p_n(x_{n}, dx_{n+1}) \mu_n(dx_n)}{\int_{S_n}\int_{S_{n+1}} \cdots \int_{S_{m-1}} p_{m-2}(x_{m-2}, dx_{m-1})\, \cdots \, p_n(x_{n}, dx_{n+1}) \mu_n(dx_n)} \nonumber \\
&=& \int_{S_{m}} 
|\mathfrak X|^{-1}  dx_m   =  \bbP\left(X_{m} \in S_{m} |  X_{m-1} \in S_{m-1}\right), \nonumber
\end{eqnarray}
which finishes the proof. 
\end{proof}

Let us define a key ratio that will be used in the rest of the section,
\begin{equation} \label{eq:s_n}
    s_n = c_0\sqrt[d]{|\Omega_{n}|}/{\sigma}_{n}\,.
\end{equation}
We next provide a lower bound for the probability of ``jumping out'' of the lower-variance region.

\begin{lemma}\label{lem:cond_history2}
Consider random variables $\{X_n\}$ generated by the stochastic algorithm~\eqref{eq:diffdisc1}. For any $0\leq n < m <\infty$, and arbitrary  measurable sets $S_{n},\ldots, S_{m-1} \subseteq \mathfrak{X}$ where $S_{m-1} \subseteq \Omega_{m-1}$, the conditional probability satisfies
\begin{equation}\label{eq:pn_lower_bd}
    \bbP\left(X_{m} \in \Omega_m^c \,|\,X_{m-1} \in S_{m-1}, \, \cdots\, , X_{n} \in S_{n} \right)   
\geq  c\, s_{m-1}^d e^{-s_{m-1}^2} =: \ell_{m} \,,
\end{equation}
where $s_{m-1}$ is defined in~\eqref{eq:s_n} and the constant $c$ only depends on 
 the dimension $d$.
\end{lemma}

\begin{proof}
If $x_{m-1} \in \Omega_{m-1}$, based on~\eqref{eq:transition_kernel} and~\Cref{lem:ball}, we have that
\[
\int_{S_m} p_{m-1}(x_{m-1}, dx_{m}) = \int_{S_m}  k_{m-1} (x_m,x_{m-1}) dx_m \leq \int_{B(x^+_{m-1}, r)}  k_{m-1} (x_m,x_{m-1}) dx_m,
\]
where the ball $B(x_{m-1}^+, r)$ is centered at $x^+_{m-1} = x_{m-1} - \eta_{m-1} G(x_{m-1})$ with radius $r$ such that $|B(x^+_{m-1}, r)| = |S_m|$. We can further investigate the upper bound:
\begin{eqnarray*}
  \int_{B(x^+_{m-1}, r)}  k_{m-1} (x_m,x_{m-1}) dx_m  &=&   \left(\sqrt{2\pi } \sigma_{m-1}\right)^{-d}   \int_{B(x^+_{m-1}, r)} \exp \bigg(-\frac{|x_{m}-x_{m-1}^+ |^2}{2\sigma^2_{m-1}}\bigg) \, dx_m\\
  &=& \left(\sqrt{2\pi } \sigma_{m-1}\right)^{-d}   \int_{B(0, r)} \exp \bigg(-\frac{|x|^2}{2\sigma^2_{m-1}}\bigg) \, dx\, =: C(|S_m|, \sigma_{m-1}),
\end{eqnarray*}
due to the translation invariance of the isotropic Gaussian kernel.
As a result, the upper bound is independent of the location of $x_{m-1}$ except the fact that  $x_{m-1} \in \Omega_{m-1}$. It only depends on two constants, the volume $|S_m| $ and $\sigma_{m-1}$.

Next, we revisit the expansion in~\eqref{eq:cond_expand} for this scenario:
\begin{eqnarray*}
&&\bbP\left(X_{m} \in S_{m} \,|\, X_{n} \in S_{n},\, \cdots, \, X_{m-1} \in S_{m-1} 
\right)  \nonumber \\
&=& \frac{\int_{S_n}\cdots \int_{S_{m-1}} \left(\int_{S_{m}}    p_{m-1}(x_{m-1}, dx_{m})\right)
\,  p_{m-2}(x_{m-2}, dx_{m-1})\, \cdots \mu_n(dx_n)}{\int_{S_n}\cdots \int_{S_{m-1}} p_{m-2}(x_{m-2}, dx_{m-1})\, \cdots \, \mu_n(dx_n)}  \\  
&\leq & \frac{\int_{S_n}\cdots \int_{S_{m-1}} C(|S_m|, \sigma_{m-1})
\,  p_{m-2}(x_{m-2}, dx_{m-1})\, \cdots \mu_n(dx_n)}{\int_{S_n}\cdots \int_{S_{m-1}} p_{m-2}(x_{m-2}, dx_{m-1})\, \cdots \, \mu_n(dx_n)} \\
&=& C(|S_m|, \sigma_{m-1}).
\end{eqnarray*}
In particular, if $S_m = \Omega_m$, we have the following lower bound:
\begin{eqnarray*}
&&\bbP\left(X_{m} \in \Omega_m^c \,|\, X_{n} \in S_{n},\, \cdots, \, X_{m-1} \in S_{m-1} \right)\\
&=& 1 - \bbP\left(X_{m} \in \Omega_m \,|\, X_{n} \in S_{n},\, \cdots, \, X_{m-1} \in S_{m-1} \right) \\
&\geq & 1 - C(|\Omega_m|, \sigma_{m-1}) \\
&=& \frac{1}{(\sqrt{2\pi } \sigma_{m-1})^{d} }    \int_{|x|>c_0 \sqrt[d]{|\Omega_{m}|}} \, \exp \bigg(-\frac{|x|^2}{2\sigma^2_{m-1}}\bigg) \, dx \\
&\geq & \frac{1}{(\sqrt{2\pi } \sigma_{m-1})^{d} }  \int_{\sqrt{2}r_{m-1} \geq |x|>r_{m-1}} \exp \bigg(-\frac{|x|^2}{2\sigma^2_{m-1}}\bigg) \, dx\quad \left( \text{where }r_{m-1} = c_0 \sqrt[d]{|\Omega_{m-1}|}\right)\\
&\geq& \frac{(2^{d/2} -1) r_{m-1}^d c_0^{-d}}{\left(\sqrt{2\pi } \sigma_{m-1}\right)^{d}} \exp \bigg(-\frac{r_{m-1}^2}{\sigma^2_{m-1}}\bigg) = \beta_1 s_{m-1}^d e^{-s_{m-1}^2}\,,
\end{eqnarray*}
where $s_n$ is given in~\eqref{eq:s_n} and $\beta_1 = \frac{2^{d/2}-1}{ (\sqrt{2\pi} c_0)^{d}}$.
\end{proof}

We are ready to present the main lemma showing that for any given set with a positive Lebesgue measure, at least one of the history iterates can visit the set almost surely.
\begin{lemma}\label{thm:prop1}
Under the assumptions in~\ref{itm:B2} and~\ref{itm:B3}, for any closed subset $\Omega\subseteq \mathfrak{X}$ where $|\Omega|>0$, we have
\begin{equation}\label{eq:prop_1_goal}
   \mathbb{P}\left(\bigcup_{n=0}^\infty \{ X_{n}\in \Omega \}\right)= 1\,.
   \end{equation}
\end{lemma}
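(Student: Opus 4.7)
The plan is to prove the equivalent contrapositive statement $\Prob\bigl(\bigcap_{n=0}^\infty B_n^c\bigr) = 0$ by constructing a telescoping product bound along the odd subsequence of indices $\{2k+1\}_{k\geq 0}$, since Lemma~\ref{lem:prob2} gives a lower bound for $\Prob(B_{n+1}\mid B_{n-1}^c)$ and thus naturally couples indices two apart. The first step is to promote Lemma~\ref{lem:prob2}'s marginal bound to a pointwise one: for every $x \in \Omega^c$ and every large enough $n$, $\Prob(B_{n+1} \mid X_{n-1}=x) \geq l_{n+1}$. This is already implicit in the proof of Lemma~\ref{lem:prob2}, since the decomposition $B_{n-1}^c = D_{n-1}^1 \cup D_{n-1}^2$ treats $x \in \Omega_{n-1}\cap\Omega^c$ and $x \in \Omega_{n-1}^c\cap\Omega^c$ separately and uniformly in $x$ (in the latter case $X_n$ is simply redrawn uniformly from $\mathfrak{X}$ by~\eqref{eq:diffdisc1}). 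Combined with the Markov property of the iteration~\eqref{eq:diffdisc1}, this yields
\[
\Prob\bigl(B_{n+1}^c \,\big|\, \mathcal{F}_{n-1}\bigr)\, \mathbf{1}_{B_{n-1}^c} \;\leq\; (1-l_{n+1})\, \mathbf{1}_{B_{n-1}^c},
\]
where $\mathcal{F}_{n-1}=\sigma(X_0,\ldots,X_{n-1})$. Iterating along odd indices starting from some $k_0$ large enough for Lemma~\ref{lem:ln_estimate} to apply, and taking expectations at each step, gives
\[
\Prob\Bigl(\bigcap_{k=k_0}^{K} B_{2k+1}^c\Bigr) \;\leq\; \prod_{k=k_0}^{K}\bigl(1-l_{2k+1}\bigr).
\]

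The second step is to show that $\sum_{k\geq k_0} l_{2k+1}=\infty$ so that the product above vanishes as $K\to\infty$. By Lemma~\ref{lem:ln_estimate}, for $n$ sufficiently large,
\[
l_{n+1} \;\geq\; c\, s_{n-1}^{d}\, e^{-s_{n-1}^2},
\]
and Assumption~\ref{itm:B2} forces $s_n=\sqrt{\log n}$, so $l_{n+1} \gtrsim (\log(n-1))^{d/2}/(n-1)$. The sum over odd indices is comparable to $\sum (\log n)^{d/2}/n$, which diverges. Therefore $\prod_{k=k_0}^{K}(1-l_{2k+1}) \to 0$, giving $\Prob\bigl(\bigcap_{k\geq k_0} B_{2k+1}^c\bigr) = 0$. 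Since $\bigcap_{n\geq 0} B_n^c \subseteq \bigcap_{k\geq k_0} B_{2k+1}^c$, we conclude $\Prob(\bigcap_n B_n^c)=0$, which is equivalent to~\eqref{eq:prop_1_goal}.

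The main technical obstacle is the promotion of Lemma~\ref{lem:prob2} from a marginal bound to a bound conditional on the entire past. As stated, Lemma~\ref{lem:prob2} controls $\Prob(B_{n+1}\mid B_{n-1}^c)$, whereas the telescoping argument requires $\Prob(B_{n+1}^c\mid \mathcal{F}_{n-1})\leq 1-l_{n+1}$ on the event $B_{n-1}^c$. This promotion rests on verifying the Markov property of~\eqref{eq:diffdisc1}, so that $\Prob(B_{n+1}\mid\mathcal{F}_{n-1})=\Prob(B_{n+1}\mid X_{n-1})$ a.s., together with the uniform-in-$x$ lower bound $\Prob(B_{n+1}\mid X_{n-1}=x)\geq l_{n+1}$ for all $x\in\Omega^c$. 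The latter requires a pointwise reading of the chain $\Prob(B_{n+1}\cap A_n^c\mid X_{n-1}=x)=\Prob(B_{n+1}\mid A_n^c)\,\Prob(A_n^c\mid X_{n-1}=x)$ used in the proof of Lemma~\ref{lem:prob2}: the first factor equals $q_n(\Omega)$ because once $X_n\in\Omega_n^c$ the next iterate $X_{n+1}$ is uniform on $\mathfrak{X}$ independently of history, and the second factor is bounded below by $\min_{x'\in\Omega_{n-1}}\Prob(A_n^c\mid X_{n-1}=x')$ when $x\in\Omega_{n-1}$ and by $q_{n-1}(\Omega_n^c)$ when $x\in\Omega_{n-1}^c$. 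Once this pointwise bound is in place, the product estimate and the divergence of $\sum l_n$ via Lemma~\ref{lem:ln_estimate} finish the argument without further difficulty.
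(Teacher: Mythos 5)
Your proposal is correct and follows essentially the same route as the paper's proof: a skip-by-two telescoping product over the events $B_{n}^c$, lower-bounded via Lemmas~\ref{lem:prob1}--\ref{lem:prob2} and~\ref{lem:ln_estimate}, with $l_{n+1}\gtrsim(\log n)^{d/2}/n$ from~\ref{itm:B2} making the sum diverge and the product vanish. Your promotion of Lemma~\ref{lem:prob2} to a pointwise bound $\Prob(B_{n+1}\mid X_{n-1}=x)\geq l_{n+1}$ for $x\in\Omega^c$, followed by conditioning on $\mathcal{F}_{n-1}$, is in fact a more careful justification of the step the paper handles by informally invoking the Markov property to replace conditioning on $B_{n-1}^c\cap\cdots\cap B_M^c$ with conditioning on $B_{n-1}^c$ alone.
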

\begin{proof}
Recall that we denote the event $X_n \in \Omega$ by $B_n$. We will prove the equivalence of~\eqref{eq:prop_1_goal}:
\begin{equation*}
    \mathbb{P}\left(\bigcap_{n=0}^\infty \{ X_{n} \not\in \Omega \}\right) = \lim_{N\to\infty} \bbP(B_{N+1}^c\cap B_{N}^c\cap \cdots \cap B_0^c)= 0\,.
\end{equation*}
Without loss of generality, let us assume that $N$ is odd. We first observe that
\begin{equation*}
    \bbP(B_{N+1}^c\cap B_{N}^c\cap \cdots \cap B_0^c) \le \bbP(B_{N+1}^c\cap B_{N-1}^c\cap B_{N-3}^c \cdots \cap B_0^c)\,.
\end{equation*}
Next, we define the event
\[
    C_{N+1}:=B_{N+1}^c\cap B_{N-1}^c\cap B_{N-3}^c \cdots \cap B_0^c\,.
\]
It is sufficient to show that $\lim_{N\rightarrow \infty}\bbP(C_{N+1})=0$ to finish the proof.

Based on its definition, we have
\begin{multline*}
    \bbP(C_{N+1})=\bbP(B_{N+1}^c\cap C_{N-1})=\bbP(B_{N+1}^c|C_{N-1})\bbP(C_{N-1}) \\
    =\left(1-\bbP(B_{N+1}|C_{N-1}) \right)\bbP(C_{N-1})=\bbP(C_{N-1})-\bbP(B_{N+1}\cap C_{N-1})\,.
\end{multline*}
This gives us the relation
\begin{equation}\label{eq:C_1}
\bbP(C_{N+1})=\bbP(C_{N-1})-\bbP(B_{N+1}\cap C_{N-1})\,.
\end{equation}
The next step is to bound the term $\bbP(B_{N+1}\cap C_{N-1})$ from below. To do this, we first observe that
\begin{equation}\label{EQ:intermediate}
    \bbP(B_{N+1}\cap C_{N-1})=\bbP\left(B_{N+1}\cap (A_N\cup A_N^c)\cap(A_{N-1}\cup A_{N-1}^c)\cap C_{N-1} \right)\,. %
\end{equation}
Utilizing the relations
\begin{eqnarray*}
    \alpha \cap (\beta\cup \beta^c)\cap(\gamma\cup \gamma^c)\cap \delta&=&\Big[(\alpha\cap \beta)\cup(\alpha\cap \beta^c)\Big] \cap \Big[(\gamma\cap \delta)\cup (\gamma^c\cap \delta)\Big]\,,\\
    \left(\alpha \cup \beta) \cap (\gamma \cup \delta \right) &=& (\alpha \cap \gamma) \cup (\beta \cap \gamma )\cup (\alpha \cap \delta) \cup (\beta \cap \delta)\,,
\end{eqnarray*}
we can rewrite~\eqref{EQ:intermediate} into
\begin{align}\label{EQ:intermediate2}
\bbP(B_{N+1}\cap C_{N-1})&=\bbP\big((\cA\cap \cC)\cup(\cB\cap \cC)\cup(\cA\cap \cD)\cup(\cB\cap \cD)\big)  \nonumber \\ 
&=\bbP ((\cA\cap \cC)\cup(\cB\cap \cC))+\bbP((\cA\cap \cD)\cup(\cB\cap \cD))-  \nonumber\\
&\quad \bbP \left(  \left((\cA\cap \cC)\cup (\cB\cap \cC) \right)\cap \left ((\cA\cap \cD)\cup(\cB\cap \cD) \right) \right)\,,
\end{align}
where
\[
    \cA:=B_{N+1}\cap A_N,\quad \cB:=B_{N+1}\cap A_N^c,\quad \cC:=A_{N-1}\cap C_{N-1},\quad \cD=A_{N-1}^c\cap C_{N-1}\,.
\]
From the definition, we see that $\cA\perp \cB$ and $\cC\perp \cD$. This immediately gives us that the last term of~\eqref{EQ:intermediate2} is zero. Moreover, \eqref{EQ:intermediate2} can be simplified to
\[
\bbP(B_{N+1}\cap C_{N-1})=\bbP(\cA\cap \cC)+\bbP(\cB\cap \cC)+\bbP(\cA\cap \cD)+\bbP(\cB\cap \cD)\,,
\]
since $ \bbP(\cA\cap \cC\cap \cB\cap \cC) = \bbP(\cA\cap \cD\cap \cB\cap \cD)=0$.

Meanwhile, using the fact that $\bbP(\cA\cap \cC)\ge 0$ and $\bbP(\cA\cap \cD)\ge 0$, we have
\begin{multline}
    \bbP(B_{N+1}\cap C_{N+1})\ge \bbP(\cB\cap \cC)+\bbP(\cB\cap \cD)\\
    =\bbP(B_{N+1}\cap A_N^c\cap A_{N-1}\cap C_{N-1})+\bbP(B_{N+1}\cap A_N^c\cap A_{N-1}^c\cap C_{N-1})\,.
\end{multline}
We can express the joint distribution using the product of several conditional distributions:
\begin{multline}
    \bbP(B_{N+1}\cap A_{N}^c\cap A_{N-1}\cap C_{N-1})=\\
    \bbP(B_{N+1}| A_{N}^c\cap A_{N-1}\cap C_{N-1})   \bbP(A_{N}^c| A_{N-1}\cap C_{N-1})\bbP(A_{N-1}| C_{N-1})\bbP(C_{N-1})\,.   
\end{multline}
We define $t_{N-1} := \bbP(A_{N-1}^c|C_{N-1})$, and thus $\bbP(A_{N-1}| C_{N-1}) = 1-t_{N-1}$.  The first term $\bbP(B_{N+1}| A_{N}^c\cap A_{N-1})$ is a constant thanks to~\Cref{lem:cond_history1} as
\[
\bbP(B_{N+1}| A_{N}^c\cap A_{N-1}) = \bbP(B_{N+1}| A_{N}^c) = |\Omega|/|\fX| =: C_\Omega. 
\]
Based on~\Cref{lem:cond_history2}, the second term on the right-hand side $\bbP(A_{N}^c| A_{N-1}\cap C_{N-1})$   is lower bounded by $\ell_{N}$.  Similarly,
\begin{multline}
    \bbP(B_{N+1}\cap A_{N}^c\cap A^c_{N-1}\cap C_{N-1})=\\
    \bbP(B_{N+1}| A_{N}^c\cap A^c_{N-1}\cap C_{N-1})   \bbP(A_{N}^c| A^c_{N-1}\cap C_{N-1})\bbP(A^c_{N-1}| C_{N-1})\bbP(C_{N-1})\,.   
\end{multline}
The first term on the right-hand sided equals to $C_\Omega$, the second term equals to $|\Omega_{N}|/|\fX|=: \delta_{N} $, and the third term is $t_{N-1}$. 

Note that $\delta_{N}  = \mathcal{O}(N^{-\alpha})$ where $0 < \alpha < 1$, as a result of~\ref{itm:B3}. With Assumption~\ref{itm:B2}, $\ell_{N} = c \frac{(\log (N-1) )^{d/2} }{N-1}$ where the constant $c$ only depends on $d$. Clearly, $\ell_{N}$ decays to zero faster than $\delta_{N}$ as $N\rightarrow \infty$. Consider an odd integer $M\in \mathbb{N}$ such that for all $N \geq M$, $\ell_N < \delta_{N}$. 
Together with~\eqref{eq:C_1}, we have
\begin{eqnarray*}
    \bbP(C_{N+1}) &\leq& \bbP(C_{N-1})-\Big[(C_\Omega \cdot \delta_N \cdot t_{N-1})+(C_\Omega \cdot \ell_{N}(1-t_{N-1}) \Big]\bbP(C_{N-1})\\
    &\leq & (1- C_\Omega \ell_N )\bbP(C_{N-1}) \\
    &\leq & (1- C_\Omega \ell_N ) (1- C_\Omega \ell_{N-2} ) \cdots (1- C_\Omega \ell_M)\,.
\end{eqnarray*}
Since $\ell_{k}$ decays to zero slower than $1/k$, we also have
\[
    \prod_{k=M, odd}^\infty \left(1- C_\Omega \ell_k \right) =0\,,
\]
which concludes the proof.
\end{proof}

\subsection{Property Two: Jumping-In Easier Than Jumping-Out}\label{sec:prop2}
In~\Cref{sec:prop1}, we have shown that the sequence of iterates $\{X_n\}$ can almost surely visit any subset of $\mathfrak{X}$ with a positive Lebesgue measure at least once. %
Next, we will show that under the assumptions stated in~\Cref{sec:main}, the iterate $X_n$ is more likely to jump from $\Omega_n^c$ into the set $\Omega_{n+1}$ than to jump from $\Omega_n$ to $\Omega_{n+1}^c$. That is,
\begin{equation} \label{eq:jumpout_vs_jumpin}
\lim\limits_{n\rightarrow\infty} \frac{\Prob ( X_{n+1}\in\Omega_{n+1}^c|X_{n}\in\myin )}{\Prob ( X_{n+1}\in \Omega_{n+1}|X_n\in\myout )} = \lim\limits_{n\rightarrow\infty} \frac{p_n(\Omega_{n+1}^c)}{q_n(\Omega_{n+1})} = 0.
\end{equation}
This is a necessary property for the SGD algorithm in~\eqref{eq:diffdisc1} to guarantee global convergence. While $q_n(\Omega_{n+1}) = \frac{|\Omega_{n+1}|}{|\mathfrak{X}|}$ is given explicitly, we will use an upper bound for $p_n(\Omega_{n+1}^c)$ in the following lemmas. The technique here is different from those in~\Cref{sec:prop1} where a lower bound of $p_n(\Omega_{n+1}^c)$ was used.

Consider $x$ as a realization of the random variable $X_n$. Recall $x^+ = x - \eta_n G(x)$ as used in~\eqref{eq:kernel}. First, we justify that for large $n$, $x^+ \in \Omega_{n+1}$ if $x\in\Omega_n$ as a realization of $X_n$.

\begin{lemma}\label{lem:xn+}
Under the assumptions in~\ref{itm:A1}-~\ref{itm:A2}, ~\ref{itm:B1} and~\ref{itm:B3}, for any $\gamma > \gamma^* =(2\eta b_2 - \eta^2 b_2^2)^{-1}$, we choose $\epsilon >0$ sufficiently small so that $B(x^*;\epsilon)\subseteq \Omega_{sc}$ and
\begin{equation}\label{eq:bgamma}
b_\gamma  \nabla^2 f(x^*) \leq \nabla^2 f(z)  \leq  b_\gamma ^{-1} \nabla^2 f(x^*), \qquad b_\gamma= \left(1 - \frac{b_1}{\gamma b_2} \right)^{1/2} \leq 1,
\end{equation}
for all $z\in B(x^*;\epsilon)$, where the inequalities hold entry-wise. Then, $\exists N \in \N$ such that $\forall n \geq N$, $\Omega_{n} \subset B(x^*;\epsilon)$. Moreover, if $x\in \Omega_n$ and $n> N$, then $x^+  = x - \eta_n G(x)\in \Omega_{n+1}$.
\end{lemma}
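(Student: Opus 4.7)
I would split the proof into the two claimed pieces: the inclusion $\Omega_n\subset B(x^*;\epsilon)$ and the descent step $x\in\Omega_n\Rightarrow x^+\in\Omega_{n+1}$; the final $N$ would be the larger of the two thresholds that emerge.

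For the first assertion, \ref{itm:B3} gives $|\Omega_n|\to 0$, while $x^*\in\Omega_n$ for every $n$ forces the levels $f_n\searrow f^*$ (otherwise $\Omega_n$ would always contain a fixed positive-measure neighborhood of $x^*$). Because $f$ is continuous on the compact set $\mathfrak X$ with $x^*$ its unique global minimizer by \ref{itm:A2}, and each $\Omega_n$ is closed and contained in $\mathfrak X$, a standard compactness argument forces $\diam(\Omega_n)\to 0$: any limit of a subsequence of points in $\Omega_n$ staying at distance at least $\epsilon$ from $x^*$ would give another global minimizer. Hence $\Omega_n\subset B(x^*;\epsilon)\subset \Omega_{sc}$ for all $n\geq N_1$.

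For the second, fix $x\in\Omega_n$ with $n\geq N_1$, so $x\in B(x^*;\epsilon)\subset \Omega_{sc}$. A Taylor expansion together with the smoothness bound $\nabla^2 f\preceq b_2 I$ from \ref{itm:A1} and $\eta<2/b_2$ from \ref{itm:B1} gives $f(x^+)\leq f(x)-\frac{\eta(2-\eta b_2)}{2}|G(x)|^2$, which combined with the Polyak--Lojasiewicz inequality $|G(x)|^2\geq 2 b_1(f(x)-f^*)$ (from $b_1$-strong convexity on $\Omega_{sc}$) yields the one-step contraction
\[
f(x^+)-f^*\leq q\,(f(x)-f^*),\qquad q:=1-b_1\eta(2-\eta b_2).
\]
A direct check shows $q<b_\gamma^2=1-b_1/(\gamma b_2)$ if and only if $\gamma>\gamma^*=(2\eta b_2-\eta^2 b_2^2)^{-1}$, which is exactly the role of the hypothesis on $\gamma$. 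To conclude $x^+\in\Omega_{n+1}$ it suffices to show $q(f_n-f^*)\leq f_{n+1}-f^*$ for large $n$. I would use \eqref{eq:bgamma} on $B(x^*;\epsilon)\supset \Omega_n$ to sandwich $\Omega_n$ between two ellipsoids centered at $x^*$, sharing the axes of $\nabla^2 f(x^*)$ with semi-axes scaled by $b_\gamma^{\pm 1/2}$; a volume computation then produces constants $c_-\leq c_+$ with $c_-/c_+=b_\gamma^2$ obeying $c_-|\Omega_n|^{2/d}\leq f_n-f^*\leq c_+|\Omega_n|^{2/d}$. Combined with \ref{itm:B3}, this gives
\[
\frac{f_{n+1}-f^*}{f_n-f^*}\geq b_\gamma^2\Bigl(1-\tfrac{1}{n}\Bigr)^{2\alpha/d}\longrightarrow b_\gamma^2,
\]
so taking $N\geq N_1$ large enough that $q\leq b_\gamma^2(1-1/n)^{2\alpha/d}$ for all $n\geq N$ yields $f(x^+)-f^*\leq q(f_n-f^*)\leq f_{n+1}-f^*$, i.e., $x^+\in\Omega_{n+1}$.

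The main technical delicacy lies in the volume-to-level relation: the factor $b_\gamma$ from \eqref{eq:bgamma} must be threaded through the ellipsoidal sandwich so that the resulting $c_-/c_+=b_\gamma^2$ is exactly the threshold that the descent constant $q$ beats. This matching is no coincidence; it is what forces $\gamma^*=(2\eta b_2-\eta^2 b_2^2)^{-1}$ to appear simultaneously in the hypothesis of the lemma and, through $c^*$, in the admissible range of $\alpha$ in \ref{itm:B3}.
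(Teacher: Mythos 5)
Your proposal is correct and follows essentially the same route as the paper's proof: shrink $\Omega_n$ into $B(x^*;\epsilon)$ via \ref{itm:B3} and uniqueness of $x^*$, derive the one-step contraction $f(x^+)-\fkf^*\le q\,(f(x)-\fkf^*)$ with $q=1-\frac{b_1}{\gamma^* b_2}$ from the descent lemma and the gradient lower bound, and sandwich $\Omega_n$ between ellipsoids to get $\frac{f_{n+1}-\fkf^*}{f_n-\fkf^*}\ge b_\gamma^2\left(1-\frac{1}{n}\right)^{2\alpha/d}$, which beats $q$ for large $n$ exactly because $\gamma>\gamma^*$. The only (harmless) difference is that you bound $f(x)\le f_n$ uniformly for all $x\in\Omega_n$, so you do not need the paper's separate treatment of the case $x\in\Omega_{n+1}$.
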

\begin{proof}
Consider our global minimizer $x^*$ and its $\epsilon$-neighborhood $B(x^*;\epsilon)$ with the given $\epsilon$. By construction, the sequence $\{f_n\}$ contains the scalar cutoff values in~\eqref{eq:diffdisc1}. By~\ref{itm:B3}, the volume of the sublevel set $\Omega_n$ determined by $f_n$ decays to $0$ as $n$ increases, so  we have $\lim_{n\rightarrow\infty} {f_n} = f(x^*) = \fkf^*$. Thus, $\exists N_1 \in \mathbb{N}$ such that for any $n\geq N_1$, $\Omega_n  \subset  B(x^*;\epsilon) \subseteq \Omega_{sc}$. Next, we consider $x\in \Omega_n$ for $n > N_1$ such that $\Omega_{n} \subseteq \Omega_{n-1} \subset  B(x^*;\epsilon)$.

Based on Assumptions \ref{itm:A1}-\ref{itm:A2}, the following holds for any $y_1,y_2\in \Omega_{sc}$.
\begin{enumerate}[label = \textbf{C\arabic*}]
    \item \label{itm:C1} \(
        G(y_1)^T(y_2-y_1) + b_1/2|y_2-y_1|_2^2 \leq f(y_2) - f(y_1) \leq   G(y_1)^T(y_2-y_1) + b_2/2|y_2-y_1|_2^2.\)
    \item \label{itm:C2}  \( \sqrt{2b_1 (f(y_1)-\fkf^*) } \leq |G(y_1)| \leq   \sqrt{2b_2 (f(y_1)-\fkf^*) },\,\text{with\ } \fkf^* = f(x^*). \)
\end{enumerate}
Here $|\cdot|_2$ denotes the Euclidean ($\ell^2$) norm of the vector. We  plug $y_1= x$ and $y_2= x^+ = x - \eta_n G(x)$ into \ref{itm:C1} and \ref{itm:C2}. Since $0 < 
\eta_n = \eta < 2/b_2$ (by Assumption~\ref{itm:B1}), we have $\eta - b_2\eta^2/2 > 0$ and
\begin{eqnarray}
    f(x^+) - \fkf^*  &\leq&   f(x) -\fkf^* - \left(\eta - b_2\eta^2/2 \right) |G(x)|^2
 \nonumber \\
&\leq &  (f(x)- \fkf^*) \left(1 -2\eta b_1 + \eta^2 b_1 b_2\right)  =  (f(x)-\fkf^*) \left(1 - \frac{b_1}{\gamma^* b_2}\right), \label{eq:J_ratio}
\end{eqnarray}
where $\gamma^* =(2\eta b_2 - \eta^2 b_2^2 )^{-1} = (1-(\eta b_2 - 1)^2)^{-1} \geq 1$ as a result of $0 < \eta < 2/b_2$. If $x\in\Omega_{n+1}\subseteq \Omega_n$, \eqref{eq:J_ratio} also implies $x^+ \in \Omega_{n+1}$ since $ f(x^+) < f(x) \leq f_{n+1}$. Therefore, we focus on the case where $x \in \Omega_n \setminus \Omega_{n+1}$. The set difference is non-empty since $|\Omega_{n+1}| < |\Omega_n|$ by Assumption~\ref{itm:B3}. 

From the Taylor expansion of $f$ around $x^*$ and the multi-dimensional mean-value theorem, $\forall y\in B(x^*;\epsilon)$, we have 
\[
 f(y) = f(x^*) + \nabla f(x^*)^\top (y-x^*) +  \frac{1}{2} (y-x^*)^\top \nabla^2 f(z(y)) (y-x^*),\quad z(y) = \tau x^* + (1-\tau) y,
\]
for some $0\leq \tau \leq 1$. Since $y \in B(x^*;\epsilon)$, we also have that $z(y) \in B(x^*;\epsilon)$. Moreover, as a result of $\nabla f(x^*) = 0$ and $f(x^*) = \fkf^*$, together with~\eqref{eq:bgamma}, $\forall y \in  B(x^*;\epsilon)$, we have
\begin{equation}\label{eq:f upperlower bound}
\frac{b_\gamma}{2} (y-x^*)^\top  \nabla^2 f(x^*) (y-x^*) \leq  f(y)-\fkf^* \leq \frac{1}{2b_\gamma} (y-x^*)^\top \nabla^2 f(x^*) (y-x^*).
\end{equation}

Next, we restrict $y$ to $\Omega_{n-1} \setminus \Omega_{n}$, $\forall n > N_1$, a subset of 
$B(x^*;\epsilon)$. The set difference is non-empty since $|\Omega_n| < |\Omega_{n-1}|$ by~\ref{itm:B3}. Using the fact that $f_{n}< f(y) \leq f_{n-1}$ and~\eqref{eq:f upperlower bound}, we obtain that
\begin{equation}\label{eq:Omega_n_area_1}
\Omega_{n-1}  \setminus \Omega_{n} \subseteq \overline{E}_{n-1} \setminus \underline{E}_{n},
\end{equation}
where the two sets $\overline{E}_{n-1}$ and $\underline{E}_{n}$ are two nested ellipsoids defined by
\begin{eqnarray*}
 \overline{E}_{n} &:=& \big\{x | (x-x^*)^\top  \nabla^2  f(x^*) (x-x^*) \leq 2b_\gamma^{-1} \left( f_{n} - \fkf^*\right)\big\} \label{eq:overline En},\\
 \underline{E}_{n} &:=& \big\{x | (x-x^*)^\top  \nabla^2  f(x^*) (x-x^*) \leq 2b_\gamma \left( {f_{n}} - \fkf^*\right)\big\}.\label{eq:underline En}
\end{eqnarray*}

We then have the important set relation for any $n > N_1$,
\begin{equation}\label{eq:Omega_n_area_2}
\underline{E}_{n} \subseteq \Omega_{n} \subseteq \Omega_{n-1} \subseteq \overline{E}_{n-1},\quad \underline{E}_{n+1} \subseteq \Omega_{n+1} \subseteq \Omega_{n} \subseteq \overline{E}_{n}.
\end{equation}
Combining the two set relations, we have
\begin{equation}\label{eq:Omega_n_area_3}
 \underline{E}_{n} \subseteq \Omega_{n}   \subseteq \overline{E}_{n},\quad n > N_1.
\end{equation}
In other words, $\Omega_{n}$ is nested between two ellipsoids centered at $x^*$ for $n > N_1$. Its volume is thus bounded in between by $ |\underline{E}_{n}|$ and $|\overline{E}_{n}|$. Define $c_3= (\sqrt{2}c_0^{-1})^d /\sqrt{\det (\nabla^2 f(x^*))}$, we have
\begin{eqnarray}\label{eq:omega_n_volume}
|\underline{E}_{n}| = c_3 b_\gamma^{d/2} (f_{n} - \fkf^*)^{d/2} \leq &   |\Omega_{n}|  &\leq    c_3 b_\gamma^{-d/2} (f_{n} - \fkf^*)^{d/2} = |\overline{E}_{n}|,\\
b_\gamma c_3^{-2/d}  |\Omega_{n}|^{2/d} \leq &  \, f_{n} - \fkf^*\, &\leq   b_\gamma^{-1}  c_3^{-2/d}  |\Omega_{n}|^{2/d}, \label{eq:cutoff_bound}
\end{eqnarray}
for $n > N_1$. Based on Assumption~\ref{itm:B3}, for $n\geq 1$, $|\Omega_{n+1}| = |\Omega_0| n^{-\alpha}$ for some $\alpha \in (0,1)$. Together with \eqref{eq:cutoff_bound}, we have the following for $n >  \max (2,N_1)$,
\begin{equation}\label{eq:bn}
b_n  \leq \frac{{f_{n+1}} - \fkf^* }{{f_n} - \fkf^*} \leq 1,\quad \text{where } b_n = b_\gamma^{2} \left(\frac{n}{n-1}\right)^{-\frac{2\alpha}{d}} = \left(1 - \frac{b_1}{\gamma b_2} \right) \left(1-  \frac{1}{n}\right)^{\frac{2\alpha}{d}}.
\end{equation}
On the other hand, since $\gamma > \gamma^* \geq 1$, we know that $b_\gamma > b_{\gamma^*}$ based on~\eqref{eq:bgamma}. Therefore, let $N_2$ be the the  smallest integer that is not smaller than  $\left( 1 - (b_{\gamma^*}/b_\gamma)^{\frac{d}{\alpha}} \right)^{-1}$. Then $\forall n \geq N_2$, we have $b_n \geq  1 - \dfrac{b_1}{\gamma^* b_2}$. 
Together with~\eqref{eq:J_ratio}, we find that
\begin{equation} \label{eq:f_ratio}
\frac{f(x^+)-\fkf^* }{{f_n} - \fkf^* } \leq \frac{f(x^+) - \fkf^*}{f(x) - \fkf^*} \leq  1 - \frac{b_1}{\gamma^* b_2}  \leq   b_n \leq \frac{{f_{n+1}} - \fkf^*}{{f_n} - \fkf^*},\  \forall n\geq N = \max(N_1,N_2).
\end{equation}
Since \eqref{eq:f_ratio} also implies $f(x^+) \leq {f_{n+1}}$, we conclude that $x^+ \in \Omega_{n+1}$ provided $x\in \myin$. 
\end{proof}

Next, we provide a lower bound for $    \frac{d_n(x)}{c_0\sqrt[d]{|\Omega_n|}}$, where  
\begin{equation} \label{eq:d_n def}
    d_n(x) =  \inf\limits_{y\in \Omega_{n+1}^c} |y- x^+|
\end{equation}
for a given $x \in\Omega_n$ and $x^+ = x - \eta_n G(x)$.

\begin{lemma}\label{lem:d_n_vs_Omega_n}
Let Assumption~\ref{itm:B2} and all conditions in~\Cref{lem:xn+} hold. There exists $N_3\in\N$ such that $\forall n\geq N_3$, we have $c^* \leq \frac{d_n(x)}{c_0\sqrt[d]{|\Omega_n|}} \leq 1$ for all $x\in\Omega_n$, where $c^* = \frac{1}{4\gamma^*} \left(\frac{b_1}{b_2}\right)^{\frac{3}{2}} $ and $\gamma^* = (2\eta b_2 - \eta^2 b_2^2)^{-1}$.
\end{lemma}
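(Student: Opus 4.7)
I would prove the two inequalities separately, both valid for $n$ large enough that~\Cref{lem:xn+} and the ellipsoid sandwich~\eqref{eq:Omega_n_area_2} apply. For the upper bound $d_n(x)\leq c_0\sqrt[d]{|\Omega_n|}$, note that~\Cref{lem:xn+} places $\Omega_{n+1}$ inside $B(x^*;\epsilon)\subseteq\Omega_{sc}$ where $f$ is strongly convex, so $\Omega_{n+1}$ is convex (sublevel set of a strongly convex function), and $x^+\in\Omega_{n+1}$ by the same lemma. Hence $d_n(x)$ equals the distance from $x^+$ to $\partial\Omega_{n+1}$, which is at most the inradius of $\Omega_{n+1}$; an inscribed ball of radius $r$ forces $|\Omega_{n+1}|\geq c_0^{-d}r^d$, so the inradius is bounded by $c_0\sqrt[d]{|\Omega_{n+1}|}\leq c_0\sqrt[d]{|\Omega_n|}$, giving the upper bound.

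For the lower bound $d_n(x)\geq c^*c_0\sqrt[d]{|\Omega_n|}$, I would first convert the functional gap $f_{n+1}-f(x^+)$ into a Euclidean lower bound on $r=|y-x^+|$ for any $y\in\Omega_{n+1}^c$. Applying~\ref{itm:C1} with $y_1=x^+,\ y_2=y$ yields $f(y)-f(x^+)\leq|G(x^+)|r+(b_2/2)r^2$; solving this quadratic and using the identity $\sqrt{A^2+B}-A=B/(\sqrt{A^2+B}+A)$ together with $|G(x^+)|^2\leq 2b_2(f(x^+)-\fkf^*)\leq 2b_2(f(y)-\fkf^*)$ (from~\ref{itm:C2} and $f(x^+)\leq f_{n+1}<f(y)$) leads to $r\geq (f(y)-f(x^+))/\sqrt{2b_2(f(y)-\fkf^*)}$. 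Sending $f(y)\downarrow f_{n+1}$ to take the infimum and inserting $f(x^+)-\fkf^*\leq\rho(f_n-\fkf^*)$ with $\rho=1-b_1/(\gamma^*b_2)$ from~\eqref{eq:J_ratio}, along with $f_{n+1}-\fkf^*\geq b_n(f_n-\fkf^*)$ from~\eqref{eq:bn}, produces $d_n(x)\geq(b_n-\rho)\sqrt{(f_n-\fkf^*)/(2b_2)}$ whenever $n$ is large enough that $b_n>\rho$.

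To finish, I would bound $c_0\sqrt[d]{|\Omega_n|}\leq c_0\sqrt[d]{|\overline{E}_n|}\leq\sqrt{2(f_n-\fkf^*)/(b_\gamma b_1)}$ using~\eqref{eq:Omega_n_area_2} and $(\det\nabla^2 f(x^*))^{1/d}\geq b_1$, so that the desired ratio satisfies
\[
\frac{d_n(x)}{c_0\sqrt[d]{|\Omega_n|}}\geq\frac{b_n-\rho}{2}\sqrt{\frac{b_\gamma b_1}{b_2}}.
\]
As $n\to\infty$ this tends to $\tfrac{1}{2}(b_1/b_2)^{3/2}(1/\gamma^*-1/\gamma)\sqrt{b_\gamma}$ via the algebraic identity $b_\gamma^2-\rho=(b_1/b_2)(1/\gamma^*-1/\gamma)$, so matching the target $c^*=(b_1/b_2)^{3/2}/(4\gamma^*)$ reduces to $(1-\gamma^*/\gamma)\sqrt{b_\gamma}\geq 1/2$. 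The main obstacle is that three compounding loss factors ($b_\gamma<1$, the finite-$n$ deviation $b_n<b_\gamma^2$, and $1-\gamma^*/\gamma<1$) all erode the bound; the way through is to exploit the freedom in~\Cref{lem:xn+} to take $\gamma$ well above $\gamma^*$ (e.g., $\gamma\geq 3\gamma^*$ is enough to make the limiting ratio strictly exceed $c^*$) and then choose $N_3$ large enough to absorb the residual $n$-dependence.
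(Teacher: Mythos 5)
Your plan is correct and follows the same overall strategy as the paper's proof: upper bound by comparing the volume of $B(x^+;d_n(x))\subseteq\Omega_{n+1}\subseteq\Omega_n$ with $|\Omega_n|$ (your appeal to convexity and the inradius is harmless but unnecessary — the ball containment is immediate from the definition of $d_n$), and lower bound by converting the gap $f_{n+1}-f(x^+)$ into a distance via strong convexity, then using \eqref{eq:J_ratio}, \eqref{eq:bn} and the ellipsoid volume bound \eqref{eq:omega_n_volume}, and finally exploiting the freedom to take $\gamma$ a fixed multiple of $\gamma^*$. The one substantive difference is the distance inequality itself: the paper combines \ref{itm:C1}--\ref{itm:C2} into the two-sided bound \eqref{eq:important_ineq1}, $|y_2-y_1|\geq\sqrt{2/b_2}\big(\sqrt{f(y_2)-\fkf^*}-\sqrt{f(y_1)-\fkf^*}\big)$, whereas you solve the quadratic from \ref{itm:C1} and bound $|G(x^+)|$ by \ref{itm:C2}, which yields the weaker bound $(f(y)-f(x^+))/\sqrt{2b_2(f(y)-\fkf^*)}$ (off by up to a factor of about $2$ when $f(x^+)$ is close to $f_{n+1}$). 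Consequently your limiting ratio $\tfrac12(b_1/b_2)^{3/2}(1/\gamma^*-1/\gamma)\sqrt{b_\gamma}$ is smaller than the paper's difference-of-roots expression, but your compensation works: with $\gamma\geq3\gamma^*$ one has $\gamma^*\geq 1$ and $b_1\leq b_2$, so $b_\gamma\geq(2/3)^{1/2}$ and $(1-\gamma^*/\gamma)\sqrt{b_\gamma}\geq(2/3)^{5/4}>1/2$ strictly, and since $b_n\uparrow b_\gamma^2$ a suitable $N_3$ absorbs the finite-$n$ deficit; the paper instead fixes $\gamma=4\gamma^*$ and uses the tighter inequality. Both land on the same $c^*$.

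One small repair: you apply \ref{itm:C1} with $y_2=y$ ranging over all of $\Omega_{n+1}^c$, but \ref{itm:C1}--\ref{itm:C2} are only valid for points in $\Omega_{sc}$, and $\Omega_{n+1}^c$ contains points far outside $\Omega_{sc}$. The fix is exactly the paper's first step: since $x^+\in\Omega_{n+1}$, $d_n(x)=\inf_{y\in\partial\Omega_{n+1}}|y-x^+|$, and $\partial\Omega_{n+1}\subseteq\Omega_{n+1}\subset B(x^*;\epsilon)\subseteq\Omega_{sc}$ for $n$ large by \Cref{lem:xn+}; restricting to boundary points also gives $f(y)=f_{n+1}$ directly, so your limiting step ``$f(y)\downarrow f_{n+1}$'' (which otherwise needs the monotonicity of $u\mapsto(u-v)/\sqrt{u}$) becomes unnecessary.
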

\begin{proof}
Let $N$, $\gamma$ be those given in the conditions of~\Cref{lem:xn+}, whose conclusions hold here by the assumptions. We consider the case that $x\in \Omega_n$ where $n>N$. Here, $x$ is a possible realization of the random variable $X_n$. Thus, we have $x^+ \in \Omega_{n+1}$ and $\Omega_{n+1}\subseteq \Omega_{sc}$.  We also have that
\begin{equation*}%
d_n(x) = \inf_{y\in \Omega_{n+1}^c} |y- x^+| = \inf\limits_{y\in \partial\Omega_{n+1}^c} |y- x^+| = \inf\limits_{y\in \partial\Omega_{n+1}} |y- x^+|.
\end{equation*}
See~\Cref{fig:omega_n} for an illustration of $x^+$, $d_n(x)$ and $\Omega_{n+1}$. Consider a ball $B(x^+;d_n(x))$ centered at $x^+$ with a radius $d_n(x)$. Since $B(x^+;d_n(x)) \subseteq \Omega_{n+1} \subseteq \Omega_n$, $|B(x^+;d_n(x))| \leq  |\Omega_n|$, or equivalently, $d_n(x)/R_n =d_n(x) /(c_0 \sqrt[d]{|\Omega_n|}) \leq 1$, where $R_n = c_0\sqrt[d]{|\Omega_n|}$. %
Thus, we have the upper bound for $d_n(x)$.

Combining \ref{itm:C1} and \ref{itm:C2} in~\Cref{lem:xn+}, we find that
\begin{eqnarray}
f(y_2) - f(y_1)  &\leq & \sqrt{2b_2 (f(y_1) - \fkf^*)} |y_2-y_1| + b_2/2|y_2-y_1|_2^2 \\
&=& \big(\sqrt{b_2/2} |y_2-y_1|  + \sqrt{f(y_1) - \fkf^*}  \big)^2 - (f(y_1)-\fkf^*),\label{eq:important_ineq}
\end{eqnarray}
which implies that for any $y_1,y_2\in\Omega_{sc}$, we have
\begin{equation}
    |y_2-y_1| \geq \sqrt{2/b_2}\big(\sqrt{f(y_2) - \fkf^*} - \sqrt{f(y_1) - \fkf^*} \big).\label{eq:important_ineq1}
\end{equation}
By plugging $y_1 = x^+$ and  any $y_2\in\partial\Omega_{n+1}$ into~\eqref{eq:important_ineq1} (note that $y_1,y_2\in\Omega_{sc}$), we have that
\begin{eqnarray}
   d_n(x) & \geq & \sqrt{\frac{2}{b_2}} \inf\limits_{y_2\in \partial\Omega_{n+1}} \left(\sqrt{f(y_2)-\fkf^*} - \sqrt{f(x^+) - \fkf^*} \right) \nonumber \\
   &=& \sqrt{\frac{2}{b_2}}  \left( \sqrt{{f}_{n+1} - \fkf^*} - \sqrt{f(x^+)-\fkf^*} \right).\label{eq:lower_bd_dn}
\end{eqnarray}

Combining~\eqref{eq:lower_bd_dn} with the right-hand side of~\eqref{eq:omega_n_volume} in~\Cref{lem:xn+}, we have
\begin{eqnarray}
   \frac{d_n(x)}{c_0\sqrt[d]{|\Omega_n|}} &\geq&   \frac{\sqrt{2/b_2}  \left(\sqrt{{f}_{n+1}- \fkf^*} - \sqrt{f(x^+) - \fkf^*} \right)}{c_0 b_\gamma^{- \frac{1}{2}} (c_3)^{\frac{1}{d}}  \sqrt{ {f_n} - \fkf^* } } \nonumber \\
   &\geq&  \frac{\sqrt{2/b_2} \left( 1-\frac{b_1}{\gamma b_2}\right)^{\frac{1}{4}} }{\sqrt{2/b_1}} \left(\sqrt{b_n } - \sqrt{\frac{f(x^+) - \fkf^*}{ f_n - \fkf^*}}\right) \nonumber  \\
   & > &  \sqrt{\frac{b_1}{b_2}} \sqrt{1-\frac{b_1}{\gamma b_2}} \left( \sqrt{1 - \frac{b_1}{\gamma b_2} } \left(1 - \frac{1}{n}\right)^{\frac{\alpha}{d}}- \sqrt{1-\frac{b_1}{\gamma^* b_2}}  \right). \label{eq:c*} 
\end{eqnarray}
Here, we have applied~\eqref{eq:f_ratio} in~\Cref{lem:xn+} and the fact that $c_3 = (\sqrt{2}c_0^{-1})^d /\sqrt{\det (\nabla^2 f(x^*))} \leq (\sqrt{2/b_1}c_0^{-1} )^d$ by~\ref{itm:A1}-\ref{itm:A2}. Note that the last term of~\eqref{eq:c*} depends on both $\gamma$ and $n$, but it no longer depends on the position of $x\in\Omega_n$. Moreover,
\begin{equation*}
       \lim_{\gamma\rightarrow \infty}   \lim_{n\rightarrow \infty}   \sqrt{1-\frac{b_1}{\gamma b_2}} \left( \sqrt{1 - \frac{b_1}{\gamma b_2} } \left(1 - \frac{1}{n}\right)^{\frac{\alpha}{d}}- \sqrt{1-\frac{b_1}{\gamma^* b_2}}  \right)  =  1 -\sqrt{1 - \frac{b_1}{\gamma^* b_2}} .
\end{equation*}

If we set $\gamma = 4\gamma^*$, let $N$ to be the corresponding integer in~\Cref{lem:xn+} for this $\gamma$, and choose $N_3>N$ such that $b_n \geq  1 - \frac{b_1}{2\gamma^* b_2}$ (since $\lim\limits_{n\rightarrow \infty} b_n = 1 - \frac{b_1}{\gamma b_2} = 1 - \frac{b_1}{ 4\gamma^* b_2} $), then $\forall n \geq N_3$, \eqref{eq:c*} leads to
\begin{equation}
   \frac{d_n(x)}{c_0\sqrt[d]{|\Omega_n|}} \geq  \sqrt{\frac{b_1}{b_2}} \sqrt{1-\frac{b_1}{4\gamma^* b_2}} \left( \sqrt{1 - \frac{b_1}{2\gamma^* b_2} } - \sqrt{1-\frac{b_1}{\gamma^* b_2}}  \right) 
   \geq  \frac{1}{4\gamma^*} \left( \frac{b_1}{b_2}\right)^{\frac{3}{2}} := c^*.\label{eq:c*_2} 
\end{equation}
\end{proof}
\begin{remark}
In the proof above, we applied~\Cref{lem:xn+} by setting $\gamma = 4\gamma^*$. However, $\gamma$ can be chosen arbitrarily large, and the lower bound $c^*$ can consequently be made bigger by a constant factor. As $\gamma$ increases, the positive integer $N$ in both~\Cref{lem:xn+} and~\Cref{lem:d_n_vs_Omega_n} must correspondingly increase.
\end{remark}

\begin{remark}
So far, we have used Assumption~\ref{itm:B3} to simplify the proof. However, the assumption can be relaxed to, for example, $|\Omega_n| = \mathcal{O} ( n^{-\alpha})$ such that $C_1 n^{-\alpha} \leq |\Omega_n| \leq C_2 n^{-\alpha}$ for some constants $C_1, C_2>0$ when $n$ is large enough. In such cases, \eqref{eq:c*_2} still holds but with a different constant $c^*$.
\end{remark}

The proof of~\Cref{lem:d_n_vs_Omega_n} relies on Assumption~\ref{itm:B1} that the step size $\eta_n$ is a constant as $n$ goes to infinity. We choose $\eta_n = \eta < 2/b_2$ as required for the convergence of the gradient descent algorithm applied to a convex optimization problem. Note that~\ref{itm:B1} requires the knowledge or estimation of $b_2$, also known as the Lipschitz constant of the gradient function $G(x)$ over $\Omega_{sc}$ (see Assumption~\ref{itm:A1}).

If we only aim for convergence but not necessarily an algebraic order of convergence, we can relax Assumption~\ref{itm:B1} by assuming $\lim\limits_{n\rightarrow \infty }\eta_n = 0$, which drops the requirement of estimating $b_2$. For example, we may replace \ref{itm:B1}-\ref{itm:B3} by the following set of assumptions.
\begin{enumerate}[label = $\overline{ \textbf{B\arabic*} }$]
\item The step size in~\eqref{eq:diffdisc1} satisfies $\eta_n = \mathcal{O}\left( \frac{1}{\log (\log n )}\right)$.
\item For $n\geq 1$, ${\sigma}_{n} = \dfrac{c_0 \sqrt[d]{|\Omega_n|}}{\sqrt{ \log n}}$. 
\item $|\Omega_n| = \mathcal{O}\left(\frac{1}{\log n}\right)$ based on the properly chosen $\{{f}_n\}$.
\end{enumerate}
Under Assumptions $\overline{\ref{itm:B1}}$-$\overline{\ref{itm:B3}}$,~\eqref{eq:jumpout_vs_jumpin} still holds, but we will no longer have the algebraic-rate estimation for the upper and lower bounds stated in~\Cref{thm:prop2} below. As a result, the algebraic convergence in~\Cref{thm:main} is replaced by a slower logarithmic convergence.

We will see the fact that $d_n(x) /\sqrt[d]{|\Omega_n|}$ is bounded below by a constant independent of both $n$ and $x$ is the key to obtaining an algebraic rate of convergence. Next, we combine results from~\Cref{lem:xn+} and~\Cref{lem:d_n_vs_Omega_n} to show~\eqref{eq:jumpout_vs_jumpin} holds.

\begin{lemma}\label{thm:prop2}
Under the assumptions ~\ref{itm:A1}-\ref{itm:A2} and~\ref{itm:B1}-\ref{itm:B3}, we define $\kappa := |\Omega_0|/|\mathfrak{X}|$. Then we have 
\begin{equation} \label{eq:out_in}
    q_n(\Omega_{n+1}) = \kappa n^{-\alpha},\quad \forall n\geq 1\,.
\end{equation}
Moreover, there exists $N\in \N$ such that
\begin{equation}\label{eq:in_out}
\beta_1 n^{-1}\leq p_n(\Omega_{n+1}^c) \leq \beta_2n^{-\alpha_2}, \quad \forall n \geq N,
\end{equation}
where $\alpha_2 = \frac{(c^*)^2}{4} + \frac{\alpha}{2}$, $\beta_1 =  \frac{2^{d/2}-1}{ (\sqrt{2\pi} c_0)^{d}}$ and $\beta_2 =1$. In particular, if $ 0 < \alpha < \frac{(c^*)^2}{2} $, then \eqref{eq:jumpout_vs_jumpin} holds.
\end{lemma}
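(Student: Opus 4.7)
The first equality \eqref{eq:out_in} is immediate from unpacking definitions: since $\Omega_{n+1}\subseteq\mathfrak{X}$ by construction, $q_n(\Omega_{n+1})=|\Omega_{n+1}\cap\mathfrak{X}|/|\mathfrak{X}|=|\Omega_{n+1}|/|\mathfrak{X}|$, and Assumption~\ref{itm:B3} gives $|\Omega_{n+1}|=|\Omega_0|n^{-\alpha}$, yielding $\kappa n^{-\alpha}$. So the real content is the two bounds in \eqref{eq:in_out}, which are essentially dual: the lower bound revisits the ``escape'' estimate used in \Cref{lem:ln_estimate}, while the upper bound uses \Cref{lem:d_n_vs_Omega_n} to rule out escape whenever $c^*$ is bounded away from zero.

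For the lower bound, the plan is to exploit the inclusion $\Omega_n^c\subseteq\Omega_{n+1}^c$ (nesting of the sublevel sets) to reduce to a quantity already bounded in the proof of \Cref{lem:ln_estimate}. Concretely,
\[
p_n(\Omega_{n+1}^c)\geq p_n(\Omega_n^c)\geq\min_{x\in\Omega_n}\Prob(X_{n+1}\in\Omega_n^c\mid X_n=x)\geq\beta_1 s_n^d e^{-s_n^2},
\]
where the last inequality is exactly the annulus estimate \eqref{eq:pn_lower_bd}. By Assumption~\ref{itm:B2}, $s_n=c_0\sqrt[d]{|\Omega_n|}/\sigma_n=\sqrt{\log n}$, so the right-hand side equals $\beta_1(\log n)^{d/2}n^{-1}$, which is $\geq\beta_1 n^{-1}$ once $\log n\geq 1$.

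For the upper bound, I would apply \Cref{lem:d_n_vs_Omega_n}: for every realization $x\in\Omega_n$ with $n\geq N_3$, the definition of $d_n(x)$ gives $\Omega_{n+1}^c\subseteq\{y:|y-x^+|\geq d_n(x)\}$, and $d_n(x)\geq c^*c_0\sqrt[d]{|\Omega_n|}=c^*\sigma_n\sqrt{\log n}$. Therefore, writing $\psi_n\sim\mathcal{N}(0,I_d)$,
\[
\Prob(X_{n+1}\in\Omega_{n+1}^c\mid X_n=x)\leq\Prob\!\left(|\psi_n|\geq c^*\sqrt{\log n}\right).
\]
A standard spherical-coordinate tail estimate for the $d$-dimensional Gaussian (by integration by parts on $\int_r^\infty t^{d-1}e^{-t^2/2}\,dt$) bounds the right-hand side by $C_d(\log n)^{(d-2)_+/2}n^{-(c^*)^2/2}$. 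Because $\alpha<(c^*)^2/2$, we have $\alpha_2=(c^*)^2/4+\alpha/2<(c^*)^2/2$, so the logarithmic prefactor is absorbed into $n^{-\alpha_2}$ for $n$ large, giving a bound $\leq\beta_2 n^{-\alpha_2}$ with $\beta_2=1$; uniformity in $x\in\Omega_n$ lets me average against $\mu_n$ without loss. The limit \eqref{eq:jumpout_vs_jumpin} then follows by taking the ratio: $p_n(\Omega_{n+1}^c)/q_n(\Omega_{n+1})\leq\kappa^{-1}n^{\alpha-\alpha_2}$ with $\alpha-\alpha_2=\alpha/2-(c^*)^2/4<0$.

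The main delicacy will be the Gaussian tail step: I need the leading exponential $n^{-(c^*)^2/2}$ to eat the polynomial-in-$\log n$ factor from the $d$-dimensional spherical measure and still fit under $n^{-\alpha_2}$. This works precisely because the user-allowed range of $\alpha$ in~\ref{itm:B3} is strictly below $(c^*)^2/2$, which creates a strictly positive gap $(c^*)^2/2-\alpha_2=(c^*)^2/4-\alpha/2>0$ to absorb sub-polynomial corrections. Everything else is bookkeeping: picking $N$ as the maximum of the thresholds from \Cref{lem:xn+}, \Cref{lem:d_n_vs_Omega_n}, and the tail estimate, and noting that the choice $\beta_2=1$ is then attainable for $n\geq N$.
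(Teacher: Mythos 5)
Your proposal is correct and follows essentially the same route as the paper: the lower bound via the nesting $\Omega_n^c\subseteq\Omega_{n+1}^c$ and the annulus estimate~\eqref{eq:pn_lower_bd} with $s_n=\sqrt{\log n}$, and the upper bound via \Cref{lem:d_n_vs_Omega_n}, the inclusion $B(x^+;d_n(x))\subseteq\Omega_{n+1}$, a Gaussian (equivalently $\chi^2_d$) tail bound, and absorption of the $(\log n)^{d/2-1}$ prefactor using the gap $(c^*)^2/2-\alpha_2>0$. The only cosmetic difference is that the paper handles the tail estimate by citing $\chi^2_d$ bounds separately for $d=1$, $d=2$, and $d\geq 3$, whereas you invoke a single spherical-coordinate tail estimate, which yields the same $(\log n)^{(d-2)_+/2}n^{-(c^*)^2/2}$ bound.
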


\begin{proof}
Based on~\eqref{eq:q_n}, we have $\frac{q_n(\Omega_{n+1})}{|\Omega_{n+1}|}  = \frac{1}{|\mathfrak{X}|}$ so~\eqref{eq:out_in} follows directly from \ref{itm:B3}. To prove that~\eqref{eq:jumpout_vs_jumpin} holds when $ 0 < \alpha < \frac{(c^*)^2}{2} $, we show that  $\lim\limits_{n\rightarrow\infty}\frac{p_n(\Omega_{n+1}^c)}{|\Omega_{n+1}|} = 0$.  Based on~\eqref{eq:kernel}-\eqref{eq:p_n}, we have
\begin{eqnarray*}
  p_n(\Omega_{n+1}^c) =\frac{\int_{\Omega_n} \left( \int_{\Omega_{n+1}^c} k_n(y,x)  dy \right) d\mu_n(x) }{  \int_{\Omega_n} d\mu_n(x) }   &\leq &  \max_{x\in \myin}  \int_{\Omega_{n+1}^c} k_n(y,x)  dy .  
\end{eqnarray*}

For $n>N_3$ where $N_3$ is the positive integer in the statement and the proof for~\Cref{lem:d_n_vs_Omega_n}, we have $x^+ \in\Omega_{n+1}$ provided that $x\in\Omega_n$. In this case, we also have $B(x^+;d_n(x)) \subseteq \Omega_{n+1}$ where $d_n(x) = \inf_{y\in \Omega_{n+1}^c} |y- x^+|$ (see~\eqref{eq:d_n def}). We then have
\begin{eqnarray*}
  \max_{x\in \myin}  \int_{\Omega_{n+1}^c} k_n(y,x)  dy & \leq  & \max_{x\in \myin}  \frac{1}{  \left(\sqrt{2\pi}\sigma_n\right)^d } \int_{|y-x^+|>d_n(x)}    \exp \left(-\frac{|y-x^+|^2}{2{\sigma}^2_n}\right) dy 
  =  \max_{x\in \myin}  \Prob\left(Q >\frac{d_n(x)^2}{\sigma_n^2} \right), 
\end{eqnarray*}
where the random variable $Q\sim \chi_d^2$, the chi-squared distribution (a sum of the squares of $d$ independent standard normal random variables).  Based on \ref{itm:B2} and~\Cref{lem:d_n_vs_Omega_n}, for any $x\in\myin$, 
\begin{equation}  \label{eq:two-side-bounds}
c^*\sqrt{\log n}  \leq  \frac{d_n(x)}{\sigma_n} = \frac{d_n(x) \sqrt{\log n}}{c_0 \sqrt[d]{|\Omega_n|}}  \leq \sqrt{\log n},\qquad \forall n\geq N_3.
\end{equation}
Therefore, we have
\begin{equation}\label{eq:pn_upper_bd}
 p_n(\Omega_{n+1}^c) \leq \max_{x\in \myin}  \int_{\Omega_{n+1}^c} k_n(y,x)  dy \leq \max_{x\in \myin}  \Prob\left(Q >\frac{d_n(x)^2}{\sigma_n^2} \right) \leq  \Prob\left(Q > (c^*)^2 \log n \right),
\end{equation}
where we dropped ``$\max_{x\in \myin}$'' in the last term of~\eqref{eq:pn_upper_bd} since the final upper bound is $x$-independent.

Based on the properties of the chi-squared distribution, we have the following bounds for different $d$.
\begin{itemize}
    \item When $d=1$, and thus $Q = Z^2$ where $Z\sim \mathcal{N}(0,1)$, we have the Chernoff bound~\cite[Eqn.(5)]{chiani2003new}
    \begin{eqnarray*} 
        \Prob\left(Q >(c^*)^2 \log n \right) &= & 2 \Prob\left(Z >  c^*\sqrt{\log n}\right) \nonumber \\
        & = &
        \text{erfc}\left(\frac{c^* \sqrt{\log n} }{\sqrt{2}}\right) \leq e^{-(c^*)^2 (\log n)/2} = n^{-\frac{(c^*)^2 }{2}},
    \end{eqnarray*}
    where $\text{erfc}(\cdot)$ is the complementary error function.
    \item The cumulative distribution function of the chi-squared distribution is $F(x) = \frac{\gamma(\frac{d}{2}, \frac{x}{2})}{\Gamma(\frac{d}{2})}$ where $\gamma(a,z)$ is the lower incomplete gamma function. When $d=2$, it is well known that $F(x)$ has an analytic form $1-e^{-x/2}$, and thus
    \begin{equation} %
        \Prob\left(Q >(c^*)^2 \log n \right) = e^{-(c^*)^2 (\log n)/2} = n^{-\frac{(c^*)^2 }{2}}.
    \end{equation}
    
    \item When $a>1$ and $z>2(a-1)$ where $z\in\R$, there exists an upper bound for the upper incomplete function $\Gamma(a,z) = \Gamma(a) - \gamma(a,z)$ as $\Gamma(a,z) < 2z^{a-1}e^{-z}$~\cite[Eqn.(3.2)]{natalini2000inequalities}. Based on~\eqref{eq:two-side-bounds}, there exists $N_4\in \N$ such that $\forall n\geq \max(N_3, N_4)$, $(c^*)^2 (\log n) /2 > d-2$.  When $d\geq 3$, by plugging in $a = \frac{d}{2}$ and $z = (c^*)^2 (\log n)/2$, we obtain
\begin{eqnarray}
        \Prob\left(Q >(c^*)^2 (\log n) \right) &<& \frac{2 \left( (c^*)^2 (\log n) /2 \right)^{\frac{d}{2}-1}}{\Gamma(\frac{d}{2})} e^{-\frac{(c^*)^2  \log n}{2}} \nonumber \\
        &=& \frac{2^{2-\frac{d}{2}}}{\Gamma(\frac{d}{2})}\left( (c^*)^2 \log n \right)^{\frac{d}{2}-1} n^{-\frac{(c^*)^2 }{2}} .  \label{eq:3d+_bd}
\end{eqnarray}
Thus, \eqref{eq:3d+_bd} holds if the dimension $d\geq 3$ and $n\geq \max(N_3, N_4)$. Here, $N_4$ is $d$-dependent.
\end{itemize}

These three scenarios cover all possible $d$, the number of dimensions. Combining all the cases above with~\ref{itm:B2}-\ref{itm:B3} and~\eqref{eq:two-side-bounds}, we have
\begin{equation} %
       \frac{p_n(\Omega_{n+1}^c) }{  |\Omega_{n+1}|} \leq 
       \begin{cases}
       \frac{1}{|\Omega_0|}n^{\alpha - (c^*)^2/2}, & d = 1,2,\\
       \ \\
       \frac{ 2^{2-d/2}}{{|\Omega_0|\Gamma(\frac{d}{2})}} \   (\log n)^{\frac{d}{2}-1}\  n^{\alpha - (c^*)^2/2}, &d\geq 3.
       \end{cases}
\end{equation}
Therefore, if we choose $0 <\alpha < \frac{(c^*)^2}{2} $, \eqref{eq:jumpout_vs_jumpin} holds. Also, there exists $N_5\in\N$ such that $\forall n\geq N :=\max\{ N_3, N_4,N_5 \}$, we have $2 (\log n)^{\frac{d}{2}-1} < n^{(c^*)^2/4-\alpha/2}$. Since $\frac{(c^*)^2}{4} + \frac{\alpha}{2} < \frac{(c^*)^2}{2}$, and $2^{1-d/2} <  \Gamma(\frac{d}{2})$ for $d \geq 3$, we can now integrate our results for different values of $d$ and obtain an upper bound
\begin{equation}
    p_n(\Omega_{n+1}^c) \leq  n^{-(c^*)^2/4-\alpha/2}.
\end{equation}
The lower bound in~\eqref{eq:in_out} is based on~\eqref{eq:pn_lower_bd} and~\ref{itm:B3}.
\end{proof}

\subsection{Proof of the Main Theorem}\label{sec:main_proof}
Once we have both~\Cref{thm:prop1} and~\Cref{thm:prop2}, in particular, the upper and lower bounds for $p_n(\Omega_{n+1}^c) = \Prob (X_{n+1}\in\Omega_{n+1}^c|X_n \in \myin)$, we are ready to prove our main result. \Cref{thm:main} demonstrates that the iterates $\{X_n\}$ converge to $x^*$ in probability. That is,  
\begin{equation}\label{eq:convergence}
  \lim_{n\rightarrow\infty}  \Prob \left(|X_n - x^*|  >\epsilon \right) = 0,\qquad \forall\epsilon>0.
\end{equation}

\begin{proof}[Proof of~\Cref{thm:main}]
Since \ref{itm:A1}-\ref{itm:A2} and \ref{itm:B1}-\ref{itm:B3} hold, results from all previous lemmas apply. 

Recall that $A_n$ denotes the event $\{X_n\in\myin\}$. We consider $n\geq N$ where $N$ is the integer in~\Cref{thm:prop2} such that~\eqref{eq:in_out} holds. 
By the law of total probability,
\begin{eqnarray}
 \Prob \left(A_{n+1}^c\right) &= &\Prob(A_{n+1}^c|A_n) \ \Prob(A_n ) +\Prob(A_{n+1}^c|A_n^c) \ \Prob(A_n^c ) \nonumber \\
 &\leq & \beta_2 n^{-\alpha_2} (1 - \Prob(A_n^c ))   + (1- \kappa n^{-\alpha})    \Prob(A_n^c ) \nonumber \\
 & = & g(n)  \Prob(A_n^c ) + h(n),\label{eq:recur1}
\end{eqnarray}
where $0 < \alpha < \alpha_2 < \frac{(c^*)^2}{2} < 1$,
$g(n) = 1- \kappa n^{-\alpha} -  \beta_2 n^{-\alpha_2}$ and $h(n) =\beta_2 n^{-\alpha_2}$.

Consider the following first-order non-homogeneous recurrence with variable coefficients
\[
r_{n+1} = g(n)r_{n} + h(n) 
\]
starting with $n = n_0$ and $r_{n_0} =   \Prob \left(A_{n_0}^c\right)$, where $n_0 = \min \{n\in\N| n\geq N, g(n) > 0\}$.
Based on~\eqref{eq:recur1}, $\Prob \left(A_n^c\right) \leq r_n, \forall n\geq n_0$, because $g(n)$ is monotonically increasing and $g(n_0) > 0$.
Thus, it is sufficient to show that $\lim\limits_{n\rightarrow\infty} r_n = 0$, which directly implies $\lim\limits_{n\rightarrow\infty}  \Prob \left(A_n^c\right)  = 0$.

The recurrence $\{r_n\}$ has an analytic solution~\cite[Section 3.3]{rosen2017handbook}. That is,
\begin{eqnarray}
    r_n &=& \left(\prod_{k=n_0}^{n-1} g(k) \right) \left(r_{n_0} + \sum_{m=n_0}^{n-1}\frac{h(m)}{\prod_{k=n_0}^m g(k)}\right) \nonumber \\
    &= & \left(\prod_{k=n_0}^{n-1}  \left(1- \kappa k^{-\alpha} -  \beta_2 k^{-\alpha_2} \right) \right) \left(r_{n_0} +  \sum_{m=n_0}^{n-1}\frac{ \beta_2 m^{-\alpha_2}}{\prod_{k=n_0}^m  \left(1- \kappa k^{-\alpha} -  \beta_2 k^{-\alpha_2} \right)}\right)  \nonumber \\
    & =  &  \underbracket{r_{n_0} \left(\prod_{k=n_0}^{n-1}  \left(1- \kappa k^{-\alpha} -  \beta_2 k^{-\alpha_2} \right) \right)}_{I_1(n)} + \underbracket{\beta_2 \sum_{m=n_0}^{n-1}  m^{-\alpha_2}\prod_{k=m+1}^{n-1} \left(1- \kappa k^{-\alpha} -  \beta_2 k^{-\alpha_2} \right)}_{I_2(n)}.\nonumber
\end{eqnarray}
Note that for any $k_0\in\N$,
\begin{multline}
 \sum_{k=k_0}^{n-1}  \log \left(1- \kappa k^{-\alpha} -  \beta_2 k^{-\alpha_2} \right)  \leq - \sum_{k=k_0}^{n-1} \left(\kappa k^{-\alpha} + \beta_2 k^{-\alpha_2} \right) \leq  - \int_{k_0}^{n} \left( \kappa x^{-\alpha } +\beta_2 x^{-\alpha_2} \right)dx  \label{eq:logexp} \\
  =  \frac{\kappa \left(k_0^{1-\alpha} - n^{1-\alpha}\right)}{1-\alpha} + \frac{\beta_2 \left(k_0^{1-\alpha_2} - n^{1-\alpha_2}\right)}{1-\alpha_2}. 
\end{multline}
With  $z_1 := \frac{\kappa}{1-\alpha}$ and  $z_2 := \frac{\beta_2}{1-\alpha_2}$, we have the following by plugging $k_0 = n_0$ into~\eqref{eq:logexp}.
\begin{equation}
\lim_{n\rightarrow\infty}I_1(n) \leq r_{n_0} \exp(z_1 n_0^{1-\alpha})\exp(z_2 n_0^{1-\alpha_2})\lim_{n\rightarrow\infty} \exp \left( -z_1 n^{1-\alpha}- z_2  n^{1-\alpha_2}\right)= 0. \nonumber
\end{equation}
We remark that $I_1(n)$ decays to zero exponentially.

Next, we will provide an asymptotic decay rate for $I_2(n)$. Plugging $k_0 = m+1$ into~\eqref{eq:logexp}, 
\begin{eqnarray}
I_2(n) &\leq& \beta_2 e^{ -z_1 n^{1-\alpha}}e^{- z_2  n^{1-\alpha_2}} \sum_{m=n_0}^{n-1}  m^{-\alpha_2} 
e^{z_1 (m+1)^{1-\alpha}} e^{z_2  (m+1)^{1-\alpha_2}}\nonumber\\
&\leq & \underbracket{\beta_2 e^{ -z_1 n^{1-\alpha}}  e^{- z_2  n^{1-\alpha_2}} \int_{n_0}^{n}  s^{-\alpha_2}  e^{z_1 (s+1)^{1-\alpha}} e^{z_2  (s+1)^{1-\alpha_2}}  ds}_{K(n)}. \nonumber
\end{eqnarray} 
We denote the right-hand side of the last inequality as $K(n)$. Consider $Z(n) = \beta_2 \kappa^{-1}n^{\alpha-\alpha_2}$. We may generalize $Z(n)$ and $K(n)$ to functions $Z(x)$ and $K(x)$. Using L'Hospital's rule,
\begin{eqnarray*}
&& \lim_{n\rightarrow\infty} \frac{K(n)}{Z(n)}=  \lim_{x\rightarrow\infty} \frac{K(x)}{Z(x)} \\
 &=&  \lim_{x\rightarrow\infty} \frac{ \beta_2\int_{n_0}^{x}  s^{-\alpha_2}  e^{z_1 (s+1)^{1-\alpha}} e^{z_2  (s+1)^{1-\alpha_2}}  ds }{ \left( \beta_2 \kappa^{-1}x^{\alpha-\alpha_2} \right) e^{ z_1 x^{1-\alpha}}  e^{z_2  x^{1-\alpha_2}} } \\
 &=&  \lim_{x\rightarrow\infty} \frac{ \kappa x^{-\alpha_2}   e^{z_1 (x+1)^{1-\alpha}} e^{z_2  (x+1)^{1-\alpha_2}}  }{x^{\alpha-\alpha_2}e^{ z_1 x^{1-\alpha}}  e^{z_2  x^{1-\alpha_2}}  \left(  (\alpha-\alpha_2) x^{-1} + z_1(1-\alpha)x^{-\alpha} + z_2(1-\alpha_2)x^{-\alpha_2}  \right)}\\
 &=& \lim_{x\rightarrow\infty}  \frac{\kappa x^{-\alpha} }{    (\alpha-\alpha_2) x^{-1} + \kappa x^{-\alpha} + \beta_2x^{-\alpha_2} } \lim_{x\rightarrow\infty} \frac{e^{ z_1  (x+1)^{1-\alpha} }   }{e^{ z_1 x^{1-\alpha} }       } \lim_{x\rightarrow\infty}\frac{e^{ z_2  (x+1)^{1-\alpha_2} }   }{e^{ z_2 x^{1-\alpha_2} }       }
 = 1.
\end{eqnarray*}
We have just proved that $K(n) \sim \beta_2 \kappa^{-1}n^{\alpha-\alpha_2}$. Thus, $r_n = \mathcal{O}(n^{\alpha-\alpha_2})$ and $\lim\limits_{n\rightarrow \infty} r_n = 0$. 

Based on~\eqref{eq:Omega_n_area_3}, the sublevel set $\Omega_n$ is  contained in a slightly bigger ellipsoid $\overline{E}_n$, which is further contained in a ball centered at $x^*$ with a radius $\sqrt{2( {f_n}-\fkf^*)/b_1}$~\cite[P.~462]{boyd2004convex}. Meanwhile,~\eqref{eq:cutoff_bound} provides an upper bound $\sqrt{{{f_n}-\fkf^*}} \leq \mathfrak{c} n^{-\alpha/d}$ where $\mathfrak{c}^d = b_\gamma^{-1/2}  c_3^{-1}|\mathfrak{X}|$. Thus, with $\mathfrak{c}_1:= \sqrt{2/b_1}\mathfrak{c}$, we have
\begin{equation} \label{eq:diam}
  |x-x^*| \leq \sqrt{2 ( {f_n}-\fkf^*)/b_1}\leq \mathfrak{c}_1 n^{-\alpha/d},\quad \forall x\in\Omega_n.
\end{equation} 
For $n\geq n_1$ where $n_1 = \min \{n\in \N| n\geq n_0, I_1(n) < I_2(n), K(n) < 2 Z(n)\}$, we have
\begin{equation} \label{eq:bound_prob}
    \Prob(X_n \not \in\Omega_n) =  \Prob(A_n^c)  \leq r_n < 2I_2(n)   \leq 4\beta_2 \kappa^{-1}n^{\alpha-\alpha_2} = \mathfrak{c}_2 n^{\alpha-\alpha_2}, \quad \mathfrak{c}_2:=4\beta_2 \kappa^{-1}.
\end{equation}

Next, we combine~\eqref{eq:diam} and~\eqref{eq:bound_prob}. Given $\epsilon>0$, $\forall n\geq \mathcal{N}_1 = \max \left(n_1,\lceil(\epsilon\mathfrak{c}_1^{-1} )^{-d/\alpha} \rceil\right)$,
\[
  \Prob \left( |X_n-x^*|  > \epsilon \right)   \leq \Prob \left( |X_n-x^*|  > \mathfrak{c}_1 n^{-\alpha/d} \right) \leq     \Prob \left( X_n \not\in\Omega_n \right)  \leq \mathfrak{c}_2 n^{\alpha-\alpha_2},
\]
which indicates convergence of the iterates $\{X_n\}$ in probability~\eqref{eq:convergence}. 

Similarly, given any $\delta>0$, for all $n\geq \mathcal{N}_2 = \max \left(n_1,\lceil(\delta\mathfrak{c}_2^{-1} )^{\alpha_2 - \alpha} \rceil\right)$, we have
\[
 \Prob \left( |X_n-x^*|  > \mathfrak{c}_1 n^{-\alpha/d} \right) \leq \mathfrak{c}_2 n^{\alpha-\alpha_2} \leq \delta,
\]
which illustrates the rate of algebraic convergence.
\end{proof}

Let us conclude this section with the following remark. Our focus here is on the challenge of proving convergence of the sequence $\{X_n\}_{n\ge 0}$ generated by the algorithm shown in Equation~\eqref{eq:SGD101}. In the literature, very often, the easier problem of convergence for the sequence 
\begin{equation}\label{EQ:min seq}
    \widetilde X_n := \argmin_{x \in \{X_0, X_1,\ldots,X_n\} } f(x)
\end{equation}
is studied. For such a sequence $\{\widetilde X_n \}$, we can derive the following stronger result for the algorithm under the conditions of~\Cref{thm:main}.
\begin{corollary}
Under the same assumptions as in \Cref{thm:main}, the sequence $\{\widetilde X_n\}_{n\ge 0}$ defined in~\eqref{EQ:min seq} converges almost surely to $x^*$, i.e.,
\[
\bbP\left( \lim_{n\rightarrow \infty} \widetilde X_n = x^*\right) = 1.
\]
\end{corollary}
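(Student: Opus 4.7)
The plan is to exploit the monotone structure of the running minimum $\widetilde X_n$ (understood so that $f(\widetilde X_n) = \min_{0\le k\le n} f(X_k)$) together with Lemma~\ref{thm:prop1}, thereby reducing almost sure convergence to a sequence of ``visit at least once'' events. First I would observe that for each integer $k\ge 2$, the sublevel set $\Omega_k = \{x : f(x)\le f_k\}$ is closed (as the preimage of $(-\infty,f_k]$ under the continuous $f$), is contained in $\mathfrak{X}$ by construction, and has positive Lebesgue measure $|\Omega_0|(k-1)^{-\alpha}$ by Assumption~\ref{itm:B3}. Hence Lemma~\ref{thm:prop1} applies, and by continuity of probability the event
\[
E_k := \{\exists\, n\in\mathbb{N} : X_n\in\Omega_k\}
\]
has $\mathbb{P}(E_k)=1$. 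Because a countable intersection of probability-one events has probability one, $E_\infty := \bigcap_{k\ge 2} E_k$ also satisfies $\mathbb{P}(E_\infty)=1$.

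Next I would use the monotonicity of $\widetilde X_n$ to trap the running minimum inside the shrinking sublevel sets. On the event $E_\infty$, the (random) first hitting times $\tau_k := \inf\{n : X_n\in\Omega_k\}$ are finite for every $k$. By definition of the running minimum, for every $m\ge \tau_k$,
\[
f(\widetilde X_m) \;=\; \min_{0\le j\le m} f(X_j) \;\le\; f(X_{\tau_k}) \;\le\; f_k,
\]
so $\widetilde X_m \in \Omega_k$ for all $m\ge \tau_k$. In particular, on $E_\infty$ the sequence $\{\widetilde X_m\}$ is eventually contained in every $\Omega_k$.

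Finally I would combine this with the shrinking of $\Omega_k$ around $x^*$ established in Lemma~\ref{lem:xn+}, together with the quantitative diameter bound~\eqref{eq:diam}. For all $k$ large enough and all $m\ge \tau_k$, we then have $|\widetilde X_m-x^*|\le \mathfrak{c}_1 k^{-\alpha/d}$, and since $k$ may be taken arbitrarily large along the trajectory, $\widetilde X_m\to x^*$ pointwise on $E_\infty$. Coupled with $\mathbb{P}(E_\infty)=1$, this is exactly the claimed almost sure convergence. I do not expect a serious obstacle here: the deep work has already been done inside Lemma~\ref{thm:prop1}, and the monotonicity of the running minimum lets us bypass any Borel--Cantelli style recurrence argument, since a single visit to each $\Omega_k$ is already enough to imprison all subsequent values of $\widetilde X_m$.
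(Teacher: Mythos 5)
Your proposal is correct and follows essentially the same route as the paper: the paper's (very terse) proof likewise combines the monotonicity of the running minimum with Property One (Lemma~\ref{thm:prop1}) to identify the limit with $x^*$ almost surely. Your version simply spells out the details the paper leaves implicit — applying Lemma~\ref{thm:prop1} to each fixed sublevel set $\Omega_k$, intersecting the resulting probability-one events, and invoking the shrinkage of $\Omega_k$ around $x^*$ — which is a sound and welcome elaboration rather than a different argument.
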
 
\begin{proof}
This is a direct consequence of the fact that $\{f(\widetilde X_n)\}_{n\ge 0}$ is monotone decreasing and bounded from below by $f(x^*) = \fkf^* $. It thus has a limit, say, $\hat{f}$. By ``Property One'', i.e., the statement in~\Cref{thm:prop1}, we know $\lim_{n\rightarrow \infty} f(\widetilde X_n) = \hat{f} = \fkf^*$ with probability $1$. By Assumption~\ref{itm:A1}-\ref{itm:A2}, $f$ is locally a bijection. Therefore, $\lim_{n\rightarrow \infty}\widetilde X_n= x^*$ with probability $1$.
\end{proof}

\section{Parameter Estimations and Algorithmic Details} \label{sec:extension}
The main contribution of this paper is to show that it is possible to achieve an algebraic rate of convergence in probability and location by using an SGD algorithm with a state-dependent variance when finding the global minimizer of a nonconvex optimization problem. We have presented the proposed algorithm~\eqref{eq:diffdisc1} and rigorously proved the convergence of the algorithm in~\Cref{sec:proof}. The proof is valid under quite general conditions on the objective function, which is further supported by the numerical results later in~\Cref{sec:example1}. 

For an explicit implementation of the proposed SGD algorithm, we also need to obtain the two essential components that directly define the proposed SGD algorithm  in~\eqref{eq:diffdisc_new}: the state-dependent variance function $\sigma_n(f(x))$ and the step size $\eta_n$. In particular, the state-dependent variance in~\eqref{eq:diffdisc1}  is directly determined by two scalar variables that change with the iteration number $n$: $f_n$ (the cutoff value that determines the sublevel-set $\Omega_n$), ${\sigma}_n$ (the decaying variance inside $\Omega_n$).

There are many types of heuristic strategies to set $\eta_n$, $f_n$, and $\sigma_n$, such as parameter tuning, which is common in implementing practical algorithms for complex optimization problems. Some of the strategies are presented in the numerical examples (see~\Cref{sec:example1}). In this section, we discuss a few particular approaches to set these scalars.

\subsection{Given Estimates of \texorpdfstring{$\Omega^*(\fkf)$}{}, \texorpdfstring{$b_1$}{} and \texorpdfstring{$b_2$}{}}\label{sec:extension1}
First, assume that we have already obtained good estimates for the function $\Omega^*: \R \mapsto \R^+$, 
\begin{equation}\label{eq:Omega*}
\Omega^*(\fkf) = |\{x\in \mathfrak{X}|f(x)\leq \fkf \}|,
\end{equation}
which maps the cutoff objective function value $\fkf$ to the volume of the $\fkf$-sublevel set of the objective function $f(x)$. Note that $\Omega^*(\fkf)$ is a monotonically increasing function of $\fkf$. Mathematically, we are given two monotonically increasing functions $\underline{\Omega}(\fkf)$ and $ \overline{\Omega}(\fkf)$ such that $\forall \fkf$,
\begin{equation*}
 \underline{\Omega}(\fkf) \leq \underline{\gamma} \Omega^*(\fkf) \leq \Omega^*(\fkf)  \leq \overline{\gamma}  \Omega^*(\fkf)  \leq  \overline{\Omega}(\fkf),
\end{equation*}
where $0<\underline{\gamma} \leq 1 \leq \overline{\gamma} <\infty$. Moreover, we assume to have the upper and lower bounds for $b_1$ and $b_2$; see \ref{itm:A1} for their definitions. That is, we are given two estimated scalars $\underline{b_1}$ and $\overline{b_2}$ such that
\begin{equation*}
     0 < \underline{b_1} \leq b_1 \leq b_2 \leq \overline{b_2} < \infty.
\end{equation*}

Recall that Assumption~\ref{itm:B3} requires a pre-determined decay rate for the volume of the $f_n$-sublevel set, i.e., $|\Omega_n|$, in terms of the iteration number $n$. Our main result \Cref{thm:main} and its proof in~\Cref{sec:proof} are based upon this assumption. Utilizing the estimations of function $\Omega^*(\fkf)$, we could set the sequence $\{f_n\}$ for each $n$ by choosing the inverse of function $\overline{\Omega}(\fkf)$ evaluated at the sequence $\{|\Omega_n|\}$ based on the pre-determined decay rate in~\ref{itm:B3} using estimations $\underline{b_1}$ and $\overline{b_2}$. Since $\overline{\Omega}(\fkf)$ is monotonically increasing, its general inverse function is well-defined, and the estimated $\{f_n\}$ will be monotonically decreasing with respect to $n$. The value can be practically computed through numerical interpolation. Once we have the pre-determined values for $\{|\Omega_n|\}$, the variable ${\sigma}_n$ can be set accordingly in order to satisfy Assumption~\ref{itm:B2}. Based on Assumption~\ref{itm:B1}, we may choose the step size as a constant $\eta_n = \eta < 2/\overline{b_2}$. As a result, all three essential assumptions \ref{itm:B1}-\ref{itm:B3} are satisfied, for which~\Cref{thm:main} will follow.

\subsection{Estimations of \texorpdfstring{$\Omega^*(\fkf)$}{}, \texorpdfstring{$\underline{b_1}$}{} and \texorpdfstring{$\overline{b_2}$}{}}\label{sec:extension3}
In~\Cref{sec:extension1}, we rely on good estimations of $\Omega^*(\fkf)$, $\underline{b_1}$  and $\overline{b_2}$. Here, we discuss several ways to obtain estimations of the required quantities that were assumed to be known earlier. %

\subsubsection{Monte Carlo Estimation of \texorpdfstring{$\Omega^*(\fkf)$}{}} \label{sec:omega_n_est}
Recall that the sublevel set $\Omega_n$ is associated with the small variance $\sigma_n$. Since the noise power of the proposed SGD algorithm~\eqref{eq:diffdisc1} inside the low-variance subset, characterized by ${\sigma}_n$, depends on the Lebesgue measure of the sublevel set $\Omega_n$, which is implicitly controlled by the cutoff value $f_n$, it is thus important to estimate the relation between $f_n$ and $|\Omega_n|$. That is, the function $\Omega^*(\fkf)$ defined in~\eqref{eq:Omega*}, if not known a priori.

Let $\fkf_{max} = \max\limits_{x\in \mathfrak{X}} f(x)$. Given any scalar $\fkf\in [\fkf^*,\fkf_{max}]$, we define
\begin{equation} \label{eq:G_cdf}
     \mathcal{G}(\fkf) := \frac{\Omega^*(\fkf)}{|\mathfrak{X}|} = \int_\mathfrak{X} \mathbbm{1}_{f(x) \leq \fkf} \ p(x) dx =\int_\mathfrak{X} \mathfrak{I}_\fkf(x)p(x) dx,\quad \mathfrak{I}_\fkf(x) :=\mathbbm{1}_{f(x) \leq \fkf},
\end{equation}
where the probability density function $p(x) = 1/|\mathfrak{X}|$, denoting the uniform distribution over the domain $\mathfrak{X}$. We may estimate $\mathcal{G}(\fkf)$ by a Monte Carlo quadrature denoted as $\mathcal{F}_N(\fkf)$ where
\begin{equation} \label{eq:F_cdf}
\mathcal{F}_N(\fkf) = \frac{1}{N} \sum_{i=1}^N  \mathbbm{1}_{f(Y_i) \leq \fkf} = \frac{1}{N}\sum_{i=1}^N \mathfrak{I}_\fkf(Y_i).
\end{equation}
Here, $\{Y_i\}$ are $N$ i.i.d.\ samples following $p(x)$, i.e., uniform samples from the domain $\mathfrak{X}$. So far, we have two cumulative distribution functions $\mathcal{F}_N$ and $\mathcal{G}$ where $\mathcal{F}_N$ is regarded as a Monte Carlo estimation of $\mathcal{G}$. Based on classic theories of the Monte Carlo methods~\cite{caflisch1998monte}, %
\begin{equation}\label{eq:rv1}
    \mathcal{F}_N( \fkf)\approx \mathcal{G}(\fkf) + \frac{\sigma(\mathfrak{I}_\fkf)}{\sqrt{N}} \nu_1, \quad \forall \fkf\in [\fkf^*, \fkf_{max}],
\end{equation}
where $\nu_1\sim \mathcal{N}(0,1)$ and the the standard deviation $\sigma(\mathfrak{I}_\fkf)$ satisfies
\begin{equation}\label{eq:std_f}
    \sigma(\mathfrak{I}_\fkf)^2 =  \int_\fX \mathfrak{I}_\fkf^2\ p(x) dx -  \mathcal{G}(\fkf)^2  =\mathcal{G}(\fkf) - \mathcal{G}(\fkf)^2  \leq \frac{1}{4}.
\end{equation}

For our proposed SGD algorithm~\eqref{eq:diffdisc1}, we need a sequence of cutoff function values $\{f_n\}$ such that $\mathcal{G}(f_n) = n^{-\alpha}$ for a given $\alpha>0$, as required by Assumption~\ref{itm:B3}. Based on the estimated function $\mathcal{F}_N$, we obtain a sequence of estimation $\{\widehat{f}_{n,N}\}$ such that
\begin{equation}~\label{eq:f_n_est}
    \mathcal{F}_N(\widehat{f}_{n,N} ) = \mathcal{G}(f_n) = n^{-\alpha}.
\end{equation}
Next, we will prove that the estimated cutoff value $\widehat{f}_{n,N}$ will converge in probability to the true $f_n$ as the number of samples $N\rightarrow\infty$.

We start with the following assumptions. 
\begin{enumerate}[label = \textbf{D\arabic*}]
\item \label{itm:D1} There exists $\tilde \fkf$ such that $\{x: f(x) \leq \tilde \fkf\} \subseteq \Omega_{sc}$ and the objective function $f(x)\in C^3(\Omega_{sc})$ is locally quadratic.
\item \label{itm:D2} For any $\fkf \in (\tilde \fkf , \fkf_{max})$, there exists a  constant $\underline{L}$ such that $\inf\limits_{\delta>0} \frac{\mathcal{G}(\fkf+\delta) - \mathcal{G}(\fkf)}{\delta} > \underline{L} >0$.
\end{enumerate}
\begin{theorem}\label{thm:MC_justificaiton}
Let \ref{itm:A1}, \ref{itm:A2}, \ref{itm:B3}, \ref{itm:D1} and~\ref{itm:D2} hold. Consider $\widehat{f}_{n,N}$ defined in~\eqref{eq:f_n_est} with $N\geq n$, and a given $\alpha \in (0,1)$. Then for all $\epsilon >0$, 
\begin{equation}\label{eq:omega_n_prob}
\lim_{N\rightarrow\infty} \Prob\left(|\widehat{f}_{n,N} - f_n| < \epsilon \right)  = 1.
\end{equation} 
\end{theorem}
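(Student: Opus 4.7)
My plan is to view the theorem as a standard quantile-consistency result for the empirical CDF $\mathcal{F}_N$, driven by two ingredients: (i) a pointwise concentration inequality controlling $|\mathcal{F}_N(\fkf) - \mathcal{G}(\fkf)|$ for each fixed $\fkf$, and (ii) the strict monotonicity of $\mathcal{G}$ at $f_n$ provided by \ref{itm:D2}, which allows us to invert the concentration bound into a bound on $|\widehat{f}_{n,N} - f_n|$. Since $\mathfrak{I}_{\fkf}(Y_i) \in \{0,1\}$ are i.i.d.\ Bernoulli random variables, Hoeffding's inequality gives, for any fixed $\fkf$ and $t>0$,
\begin{equation*}
    \Prob\bigl(|\mathcal{F}_N(\fkf) - \mathcal{G}(\fkf)| > t\bigr) \leq 2\exp(-2Nt^2).
\end{equation*}
(One could also use~\eqref{eq:std_f} together with Chebyshev's inequality, which gives a weaker but still sufficient bound.)

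Next, I would apply this bound at the two deterministic points $f_n - \epsilon$ and $f_n + \epsilon$ with $t = \underline{L}\epsilon/2$. Since $\mathcal{G}(f_n)=n^{-\alpha}$ and \ref{itm:D2} yields $\mathcal{G}(f_n+\epsilon) - \mathcal{G}(f_n) \geq \underline{L}\epsilon$ and $\mathcal{G}(f_n) - \mathcal{G}(f_n-\epsilon) \geq \underline{L}\epsilon$, on the event that both concentration bounds hold we have
\begin{equation*}
    \mathcal{F}_N(f_n + \epsilon) \geq \mathcal{G}(f_n+\epsilon) - \tfrac{\underline{L}\epsilon}{2} \geq n^{-\alpha} + \tfrac{\underline{L}\epsilon}{2} > n^{-\alpha},
\end{equation*}
and analogously $\mathcal{F}_N(f_n-\epsilon) < n^{-\alpha}$. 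Because $\mathcal{F}_N$ is monotone nondecreasing and $\widehat{f}_{n,N}$ is defined as a solution to $\mathcal{F}_N(\widehat{f}_{n,N}) = n^{-\alpha}$ (interpreted via the generalized inverse if the step function does not hit $n^{-\alpha}$ exactly), these two inequalities force $f_n - \epsilon \leq \widehat{f}_{n,N} \leq f_n + \epsilon$. Hence
\begin{equation*}
    \Prob\bigl(|\widehat{f}_{n,N} - f_n| \geq \epsilon\bigr) \leq 4\exp\bigl(-\tfrac{N\underline{L}^2\epsilon^2}{2}\bigr),
\end{equation*}
which tends to $0$ as $N\to\infty$ and establishes~\eqref{eq:omega_n_prob}.

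The main subtlety I expect is the well-posedness of $\widehat{f}_{n,N}$: since $\mathcal{F}_N$ is piecewise constant with jumps of size $1/N$ while $n^{-\alpha}$ is a fixed real number, exact equality in~\eqref{eq:f_n_est} may fail, and one must define $\widehat{f}_{n,N}$ as $\inf\{\fkf : \mathcal{F}_N(\fkf) \geq n^{-\alpha}\}$ or use rounding to the nearest grid value. The assumption $N \geq n$ together with \ref{itm:D2} and \ref{itm:D1} (ensuring that $f_n$ lies in the regime where $\mathcal{G}$ is strictly increasing at the required rate) handles this: any rounding error in $\mathcal{F}_N(\widehat{f}_{n,N})$ is at most $1/N$, which can be absorbed into the tolerance $\underline{L}\epsilon/2$ once $N$ is sufficiently large. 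Assumptions~\ref{itm:A1}, \ref{itm:A2}, \ref{itm:B3} and \ref{itm:D1} are used only to ensure that, for $n$ large enough, $f_n$ lies in the interior of the interval on which \ref{itm:D2} is valid, so that both perturbations $f_n \pm \epsilon$ also lie in that regime for small $\epsilon$.
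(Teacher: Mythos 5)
There is a genuine gap: your argument applies the lower bound $\underline{L}$ from \ref{itm:D2} at $f_n$ and at $f_n\pm\epsilon$, but \ref{itm:D2} is only assumed for $\fkf\in(\tilde\fkf,\fkf_{max})$. By \ref{itm:B3} the cutoff values satisfy $f_n\downarrow\fkf^*$, so beyond some threshold $\mathfrak n$ one has $f_n\le\tilde\fkf$, and in that range no uniform lower Lipschitz constant for $\mathcal G$ can exist: by the local quadratic structure of \ref{itm:D1}, $\mathcal G(\fkf)=\mathcal{O}\big((\fkf-\fkf^*)^{d/2}\big)$, hence $\mathcal G'(\fkf)\to 0$ as $\fkf\to\fkf^*$ whenever $d\ge 3$. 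Your closing remark that \ref{itm:A1}, \ref{itm:A2}, \ref{itm:B3} and \ref{itm:D1} merely ensure that $f_n$ lies \emph{inside} the interval where \ref{itm:D2} holds reads the assumptions backwards: \ref{itm:D1} describes precisely the complementary regime $f_n\le\tilde\fkf$, which your proof does not cover. Since the theorem is stated with $N\ge n$ (so $n$ may grow with $N$), this regime cannot be dismissed, and even for a fixed large $n$ your constant $\underline{L}$ must be replaced by an $n$-dependent local slope.

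This is exactly where the paper's proof diverges from yours. It splits into two cases: for $n\le\mathfrak n$ (i.e., $f_n>\tilde\fkf$) it uses \ref{itm:D2} together with Chebyshev's inequality and the variance bound~\eqref{eq:std_f}, much as you do (your Hoeffding bound is in fact sharper there, and your quantile-consistency framing via $\mathcal F_N(f_n\pm\epsilon)$ versus $n^{-\alpha}$ is a clean way to handle the well-posedness of the generalized inverse); for $n>\mathfrak n$ it instead invokes \ref{itm:D1} to show $\mathcal G$ is $C^1$ near $f_n$ with $\mathcal G'(f_n)=\mathcal O\big(n^{-\alpha(1-2/d)}\big)$, and then uses a root-finding sensitivity estimate $|\widehat f_{n,N}-f_n|\approx\big|\mathcal G(f_n)-\mathcal F_N(f_n)\big|/\mathcal G'(f_n)$ plus Chebyshev to get a tail bound of order $n^{2\alpha(1-2/d)}/(CN\epsilon^2)$; the hypotheses $N\ge n$ and $\alpha<\tfrac12$ (from \ref{itm:B3} when $d>2$) are what make this bound vanish as $N\to\infty$. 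To repair your proof you would need to reproduce this second case, e.g.\ by replacing $\underline L$ with a local slope $L_n\sim\mathcal G'(f_n)$ in your two-sided comparison at $f_n\pm\epsilon$ and checking that $N L_n^2\epsilon^2\to\infty$ under the stated constraints on $n$, $N$, and $\alpha$.
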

\begin{proof}
Based on Assumption~\ref{itm:D2}, for any two $\fkf_1,\fkf_2 \in (\tilde \fkf, \fkf_{max})$, we have
\begin{eqnarray}
| \mathcal{F}_N( \fkf_1) -   \mathcal{F}_N( \fkf_2 )|  &=& 
  |\mathcal{F}_N( \fkf_1) -   \mathcal{G}( \fkf_1) +   \mathcal{G}( \fkf_1) -   \mathcal{G}( \fkf_2) +   \mathcal{G}( \fkf_2) -  \mathcal{F}_N( \fkf_2 ) | \nonumber \\
  & >& \underline{L} |\fkf_1-\fkf_2|  - |\mathcal{F}_N( \fkf_1) -   \mathcal{G}( \fkf_1)| - |\mathcal{F}_N( \fkf_2) -   \mathcal{G}( \fkf_2)|. \label{eq:bounding F_n}
\end{eqnarray}
Furthermore, there exists $\mathfrak{n}\in \mathbb N$ such that $f_{\mathfrak{n}+1} \leq \tilde \fkf < f_\mathfrak{n}$ where $\{f_n\}$ are defined by~\eqref{eq:f_n_est}. Clearly, $\tilde \fkf < f_n$ for any $n \leq \mathfrak n$. With $\fkf_1$ replaced by $\widehat{f}_{n,N}$, and $\fkf_2$ replaced by $f_n$, \eqref{eq:bounding F_n} yields
\begin{eqnarray}
 |\widehat{f}_{n,N}-f_n| &\leq& \frac{1}{\underline{L}} \left( | \mathcal{F}_N( \widehat{f}_{n,N}) -   \mathcal{F}_N( f_n )| +  | \mathcal G( f_n ) -   \mathcal{F}_N( f_n )|  +  | \mathcal G( \widehat{f}_{n,N}) -   \mathcal{F}_N( \widehat{f}_{n,N} )|  \right) \nonumber \\
 &=& \frac{1}{\underline{L}} \left( 2| \mathcal{G}(f_n) -   \mathcal{F}_N( f_n )| +  | \mathcal G( \widehat{f}_{n,N}) -   \mathcal{F}_N( \widehat{f}_{n,N} ) \right). \nonumber
\end{eqnarray}
As a result, for any $\epsilon>0$, we have the following by Chebyshev’s inequality and~\eqref{eq:std_f}.
\begin{eqnarray}
&& \Prob( |\widehat{f}_{n,N}-f_n| <  \epsilon ) \label{eq:fn_large} \\
&\geq & 
\Prob( \{ | \mathcal{G}(f_n) -   \mathcal{F}_N( f_n )| <   \epsilon/(3\underline{L})| \} 
\cap  
\{\mathcal{G}(\widehat{f}_{n,N}) -   \mathcal{F}_N( \widehat{f}_{n,N} )| < \epsilon/(3\underline{L})\} ) 
\nonumber \\
&\geq& 1 - \Prob ( | \mathcal{G}(f_n) -   \mathcal{F}_N( f_n )| \geq   \epsilon/(3\underline{L})|  -  \Prob (  \mathcal{G}(\widehat{f}_{n,N}) -   \mathcal{F}_N( \widehat{f}_{n,N} )| \geq \epsilon/(3\underline{L}) )
 \geq 1-  \frac{9\underline{L}^2}{2N\epsilon^2}. \nonumber 
\end{eqnarray}

When $n> \mathfrak{n}$, we have $f_n \leq \tilde \fkf$ and thus $\Omega_n \subseteq \{x:f(x)\leq \tilde \fkf\}\subseteq \Omega_{sc}$ by~\ref{itm:D1}. We also have that $f(x)\in C^3(\Omega_{sc})$ according to~\ref{itm:D1}, and
\begin{equation}\label{eeq:f_in_Omega_sc}
f(x) = f^* + (x-x^*)^\top H (x-x^*) + o(|x-x^*|^2), \quad \forall x\in \Omega_{sc},
\end{equation}
where $H = \nabla^2 f(x^*)$ is positive definite. Thus, $\mathcal{G} (\fkf )$ is continuously differentiable for $\fkf \leq \tilde \fkf$. According to~\eqref{eq:omega_n_volume} and~\eqref{eq:f_n_est}, $n^{-\alpha} = \mathcal{G} (f_n) = \mathcal{O}\left((f_n - \fkf^*)^{d/2} \right)$, which leads to $ f_n - \fkf^* =\mathcal{O} \left( n^{-2\alpha/d}\right)$. Consequently, 
$\mathcal{G}'(f_n)  = \mathcal{O}\left((f_n - \fkf^*)^{d/2-1} \right) =\mathcal{O}\left(n^{-\alpha(1-2/d)} \right)$.
We remark here that as $n\rightarrow \infty$, $\mathcal{G}'(f_n)\rightarrow\infty$ for $d=1$, remains constant for $d=2$, and goes to zero for $d\geq 3$. Here, analyzing the error $|\widehat f_{n,N} - f_n|$ links to the sensitivity analysis for the root-finding problem~\cite[Chapter 3]{corless2013graduate}. We apply the classical result and obtain
\begin{equation}
    |\widehat f_{n,N} - f_n| \approx \bigg| \frac{  \mathcal{G} (f_n) -   \mathcal{F}_N (f_n)}{ \mathcal{G}'(f_n)} \bigg|. %
\end{equation}
Finally, for any $\epsilon>0$, by Chebyshev’s inequality and \eqref{eq:std_f}, we have
\begin{equation}\label{eq:fn_small}
 \Prob( |\widehat{f}_{n,N}-f_n| \geq  \epsilon ) \approx \Prob \left( | \mathcal{G}(f_n) -   \mathcal{F}_N( f_n )| \geq  \mathcal{G}'(f_n)\epsilon \right)  
 \leq  \frac{ n^{2\alpha(1-\frac{2}{d})}}{C N \epsilon^2},
\end{equation}
where $C>0$ is a constant. When $d\leq 2$, the numerator of the last term monotonically decreases, so $\Prob( |\widehat{f}_{n,N}-f_n| \geq  \epsilon )$ decreases at least with the rate $\mathcal{O}(N^{-1})$. When $d>2$, we have $\alpha < \frac{1}{2}$ by Assumption~\ref{itm:B3}, and thus $2\alpha(1-\frac{2}{d}) < 1$. We also have $\Prob( |\widehat{f}_{n,N}-f_n| \geq  \epsilon )$ decreases as $N\rightarrow \infty$ since $n\leq N$ by assumption.

The proof is complete by combining~\eqref{eq:fn_large} and \eqref{eq:fn_small}.
\end{proof}

\begin{remark}
\Cref{eq:std_f} shows that the absolute error in the Monte Carlo estimation is small when $\Omega^*(\fkf)\rightarrow 0$ (or equivalently when $\fkf\rightarrow \fkf^* $) as well as when $\Omega^*(\fkf)\rightarrow |\fX|$ (or equivalently when $\fkf\rightarrow \fkf_{max}$). On the other hand, the error is the largest when $\Omega^*(\fkf) = |\fX|/2$. This observation matches the numerical results shown in~\Cref{fig:cdf-compare}.
\end{remark}

Although the proof in~\Cref{sec:main} is based on the setup~\eqref{eq:diffdisc1}, it is sometimes more convenient to draw a sample with high-variance variance $\sigma_n^+$ rather than a uniform sample from $\fX$ when $X_n\in\Omega_n^c$. In that case, the Monte Carlo estimation described above requires many additional objective function evaluations, which could be computationally prohibitive for large-scale problems. An alternative approach is to utilize the iterates $\{X_n\}$ that are already generated as we execute the proposed SGD algorithm~\eqref{eq:diffdisc_new}. This is what we refer to as the online estimation. However, based on~\eqref{eq:diffdisc_new}, the iterates $\{X_n\}$ are neither independent nor following the uniform distribution. Nevertheless, one could still \textit{estimate} the relation between $\fkf$ and $\Omega^*(\fkf)$ using the high-variance iterates. Here, we say $X_{n+1}$ is a high-variance iterate if $X_{n}\in\Omega_n^c$ and thus $\Prob(X_{n+1}|X_{n})$ is subject to the large noise coefficient $\sigma_n^+$ in~\eqref{eq:two-stage-sigma}. 
The larger the noise coefficient $\sigma_n^+$ is, the less the iterate $X_{n+1}$ is correlated with $X_{n}$, which makes $X_{n+1}$ closer to being a uniform sample drawn from the domain $\fX$.
Later in~\Cref{sec:estimate_fn}, we will present a numerical example to demonstrate the effectiveness of such online estimation of $\Omega^*(\fkf)$ by using the high-variance iterates compared with the estimation using the i.i.d.\ uniform samples.

\subsubsection{Estimating \texorpdfstring{$b_1$}{} and  \texorpdfstring{$b_2$}{}}
\label{sec:b1b2_est}

Note that $b_1$ and $b_2$ are lower and upper bounds for the eigenvalues of $\nabla^2 f(x), x\in\Omega_{sc}$. Since $f(x)$ is locally quadratic in $\Omega_{sc}$ based on~\eqref{eeq:f_in_Omega_sc}, we may regard  the smallest and the largest eigenvalues of $H = \nabla^2 f(x^*)$ as $b_1$ and $b_2$, respectively. 

Given any set of scalar values $\{g_n\}$ where $g_n\xrightarrow{n\rightarrow\infty} \fkf^*$, for $n$ large enough, we have
\[
\Pi_n : = \{x:f(x) \leq g_n\} \subseteq B_{\Pi_n}  := \{x: |x-x^*| \leq \sqrt{2(g_n - \fkf^*) / \lambda_1} \},
\]
where $0<\lambda_1\leq \ldots \leq \lambda_d <\infty$ are eigenvalues of  $H$. The diameter of $B_{\Pi_n}$ is  $2\sqrt{2(g_n - \fkf^*) / \lambda_1}$, which could be approximated by $\operatorname{diam}(\Pi_n)$. Recall that $\operatorname{diam}(A) = \sup\limits_{y_1,y_2\in A} |y_1-y_2|$. Thus,
\begin{equation} \label{eq:b1_def_motive}
    b_1 \approx \lambda_1 = \lim_{n\rightarrow \infty} \frac{8(g_n-f^*)}{|\operatorname{diam}(B_{\Pi_n})|^2} \approx \lim_{n\rightarrow \infty} \frac{8(g_n-f^*)}{\sup\limits_{f(y_1),f(y_2) \leq g_n}|y_1-y_2|^2 }.
\end{equation}
On the other hand, we may approximate the largest eigenvalue $\lambda_d$ by the maximum slope of the gradient function $G(x)$ at $x=x^*$. That is,
\begin{equation} \label{eq:b2_def_motive}
 b_2 \approx \lambda_d =  \sup\limits_{y\neq 0} \frac{y^\top H y }{y^\top y} =\lim_{n\rightarrow \infty} \sup\limits_{f(y_1),f(y_2) \leq g_n} \frac{|G(y_1)-G(y_2)|}{|y_1-y_2|}.
\end{equation}

Note that estimations based on~\eqref{eq:b1_def_motive} and~\eqref{eq:b2_def_motive} are only valid once we have access to samples from $\Omega_{sc}$. On the other hand, the accuracy of $b_1$ and $b_2$ estimations only become relevant to the convergence rate of the SGD algorithm once $\Omega_n\subseteq \Omega_{sc}$. Taking both aspects into account, we propose the following iterative algorithm to approximate $b_1$ and $b_2$.

Consider i.i.d.~samples $\{Y_n\}$ drawn uniformly from the domain $\mathfrak{X}$, and the set of scalar values $\{g_n\}$ where $g_n\rightarrow \fkf^*$. For a given $m\in\mathbb{N}$, we first select $m$ samples where
$$
\{y_{n_1},y_{n_2}, \ldots, y_{n_m}\} \subseteq \{y_{1},y_{2}\ldots, y_{n_m}\}
$$
such that $f(Y_{n_i})\leq g_1$ for $i=1,\ldots,m$. Using those $m$ samples, we estimate $b_1$ and $b_2$ based on~\eqref{eq:b1_def_motive} and~\eqref{eq:b2_def_motive}. For $k=1,\ldots,m-1$,
\begin{eqnarray*}
 \widehat{b_2}^{(1)}   &\leftarrow&  \max \Big( \widehat{b_2}^{(1)}, \frac{|G(Y_{n_{k+1}}) - G(Y_{n_k})|}{|Y_{n_{k+1}} - Y_{n_k}|}  \Big), \\
\widehat{\fkf^*} &\leftarrow&  \min \big( \widehat{\fkf^*},  f(Y_{n_k})\big), \\
 \widehat{D}^{(1)}   &\leftarrow&  \max \big( \widehat{D}^{(1)}, |Y_{n_{k+1}} - Y_{n_k}| \big),
\end{eqnarray*}
and finally, with the best $\widehat{\fkf^*}$ and $\widehat{D}^{(1)}$ estimated from the $m$ samples, we set
\[
  \widehat{b_1}^{(1)}  = 8|g_1 - \widehat{\fkf^*}|/ \widehat{D}^{(1)}.
\]
We may then use $\widehat{b_1}^{(1)}$ and $\widehat{b_2}^{(1)}$ to obtain $\alpha^{(1)}$ as an estimator for the true $\alpha$ based on~\ref{itm:B3}, followed  by estimations of $f_n$ as discussed in~\Cref{sec:omega_n_est}.

Since we do not know where the set $\Omega_{sc}$ is a priori, the estimation process has to be repeated to obtain accurate approximations of $b_1$ and $b_2$. We summarize the algorithm in~\Cref{alg:b1b2_2}.
\begin{algorithm}[ht!]
\caption{Estimation of $b_1$ and $b_2$~\label{alg:b1b2_2}}
\begin{algorithmic}[1]
\STATE Given i.i.d.~uniform samples $\{Y_n\}$, $m\in\mathbb{N}$, and scalar values $\{g_n\}$ where $g_n\rightarrow \fkf^*$. Set $\widehat{\fkf^*}  = f(Y_1)$.
\FOR{$l= 1,2,\ldots $}
\STATE Set $k_l = ml(l-1)/2$ and $\widehat{b_2}^{(l)} = \widehat{D}^{(l)} = i=0$.
\WHILE{$i < ml$}
\IF{$f(Y_n) < g_{1+k_l}$}
\STATE Update $i \leftarrow i+1$, and set $ Y_{n_{s}} = Y_n$ where $s= i+k_l$.
\STATE Update $\widehat{\fkf^*} \leftarrow  \min \big ( \widehat{\fkf^*},  f(Y_{n_{s}})\big )$.
\IF{$i\geq 2$}
\STATE Update $ \widehat{D}^{(l)}\leftarrow \max \big( \widehat{D}^{(l)}, |Y_{n_{s}} - Y_{n_{s-1}}| \big)$,  and $\widehat{b_2}^{(l)}   \leftarrow  \max \Big( \widehat{b_2}^{(1)}, \frac{|G(Y_{n_{s}}) - G(Y_{n_{s-1}})|}{|Y_{n_{s}} - Y_{n_{s-1}}|}  \Big)$.
\ENDIF
\ENDIF
\ENDWHILE
\STATE Set $\widehat{b_1}^{(l)}  = 8|g_{1+k_l} - \widehat{\fkf^*}|/ \widehat{D}^{(l)}$. Obtain $\alpha^{(l)}$ based on $\widehat{b_1}^{(l)}$ and $\widehat{b_2}^{(l)}$.
\STATE Estimate $\{f_n\}$ for $n = 1 + k_{l}, 2 + k_{l},\ldots, k_{l+1}$.
\ENDFOR
\end{algorithmic}
\end{algorithm}

\subsubsection{Estimating \texorpdfstring{$\underline{b_1}$}{} and  \texorpdfstring{$\overline{b_2}$}{}}\label{sec:bound_est}
While~\Cref{sec:b1b2_est} discusses how to approximate $b_1$ and $b_2$ accurately by repeating the process with selected uniform samples that are gradually more likely to be drawn from $\Omega_{sc}$, we present here an algorithm that provides crude lower and upper bounds for $b_1$ and $b_2$, respectively. We remark that although this algorithm is more straightforward, it will result in a slower algebraic convergence rate due to the possible severe under- and over-estimations.

Based on~\eqref{eq:b1_def_motive}, an upper bound for $b_2$, denoted as $\overline{b_2}$, can be obtained based on the divided difference. Given two consecutive iterates $X_n$ and $X_{n+1}$, we compute
\begin{equation}\label{eq:Lip_constant}
    E_n = \frac{|G(X_{n+1}) - G(X_n)|}{|X_{n+1}-X_n|},
\end{equation}
which gives us a sequence of estimates $\{E_n\}$. Within the sequence, we treat the largest value as an upper bound for $b_2$, denoted as $\overline{b_2}$.

Using similar arguments in~\Cref{lem:xn+},  for sufficiently large $n$, we have $\Pi_n\subseteq \Omega_{sc}$ and $\Pi_n$ is nested between two ellipsoids centered at $x^*$ whose quadratic forms are defined by $\nabla^2 f(x^*)$. We also have
\begin{equation}\label{eq:ineq}
\lim_{n\rightarrow\infty}  |\Pi_n|^{-2}  \left( 2 (g_n - \fkf^*) / c_0^2 \right)^{d} =  \det(\nabla^2 f(x^*)) = \lambda_1\ldots \lambda_d \leq \lambda_1 \lambda_d^{d-1}.
\end{equation}
Since $\lambda_d \leq b_2 \leq \overline{b_2}$, %
we may use $\mathcal{E}_n$ as a lower bound estimation for $\lambda_1$ where
\begin{equation}\label{eq:b1_est}
\lambda_1 \geq  \mathcal{E}_n = (\overline{b_2})^{1-d} |\Pi_n|^{-2} \left( 2(g_n - \fkf^*) / c_0^2 \right)^{d} .
\end{equation}
Among the sequence $\{ \mathcal{E}_n \}$, we select the smallest value as a lower-bound estimate for $b_1$. We remark that in practice, if $\fkf^*$ is known, then we always have that $g_n > \fkf^*$ by construction. If $\fkf^*$ is unknown and needs to be estimated, one should use the minimum value in any finite sequence $\{g_n\}$. We also have $g_n > \fkf^*$ until the last element of the finite sequence, when the estimation~\eqref{eq:b1_est} should stop. We comment that if the function $f(x)$ satisfies our assumption, the ratio $(g_n - \fkf^*)^d/|\Pi_n|^2$ should be non-vanishing although both sides of the fraction go to zero as $n$ increases. For any given $g_n$, estimating the volume of $\Pi_n$ can be achieved by the Monte Carlo method; see~\Cref{sec:omega_n_est} for details.

We summarize the scheme of obtaining a lower bound $\underline{b_1}$ and an upper bound $\overline{b_2}$ into~\Cref{alg:b1b2}.

\begin{algorithm}[ht!]
\caption{Estimation of $\underline{b_1}$ and $\overline{b_2}$~\label{alg:b1b2}}
\begin{algorithmic}[1]
\STATE Given the initial iterate $X_0$, compute $X_1$ based on~\eqref{eq:diffdisc1}. Set $\underline{b_1} = \overline{b_2} = \frac{|G(X_0) - G(X_1)|}{|X_0 - X_1|}$.
\FOR{$n=2$ to $n_{\text{tot}}$}
\STATE Obtain $g_n$, (estimations of) $|\Pi_n|$ and $\fkf^*$. Given $X_{n-1}$, compute $X_n$  based on~\eqref{eq:diffdisc1}. 
\STATE Update $\overline{b_2} \leftarrow  \max\left(\overline{b_2}, \frac{|G(X_n) - G(X_{n-1})|}{|X_{n} - X_{n-1}|}\right)$ and $\underline{b_1} \leftarrow  \min\left(\underline{b_1},  \mathcal{E}_n \right)$ where $ \mathcal{E}_n $ follows~\eqref{eq:b1_est}.
\ENDFOR
\end{algorithmic}
\end{algorithm}

\section{Numerical Examples}\label{sec:numerics}

We have given analysis in the previous sections regarding the convergence of the proposed AdaVar algorithm~\eqref{eq:diffdisc1}. In~\Cref{sec:example1} below, we present a few simple numerical examples to quantitatively demonstrate the effectiveness of the proposed algorithm. The use of an adaptive state-dependent variance is very powerful, and we give two examples in~\Cref{sec:example2} where this technique is applied to settings that are not theoretically discussed in this paper. We will also discuss such settings in~\Cref{sec:conclusion} under future directions. In~\Cref{sec:estimate_fn}, we provide an example corresponding to the estimations discussed in~\Cref{sec:extension}.

\subsection{\texorpdfstring{The AdaVar Algorithm~\eqref{eq:diffdisc1}}{} Applied to Global Optimization Problems}\label{sec:example1}

\begin{figure}
\centering
\subfloat[$c = 0.05$]{
\includegraphics[width=0.95\textwidth]{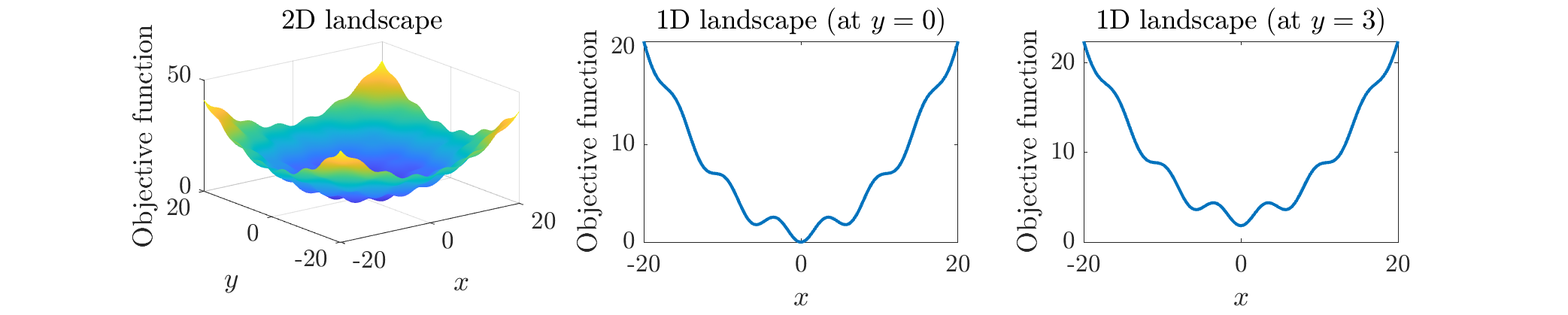} \label{fig:c005}}\\
\subfloat[$c = 0.01$]{
\includegraphics[width=0.95\textwidth]{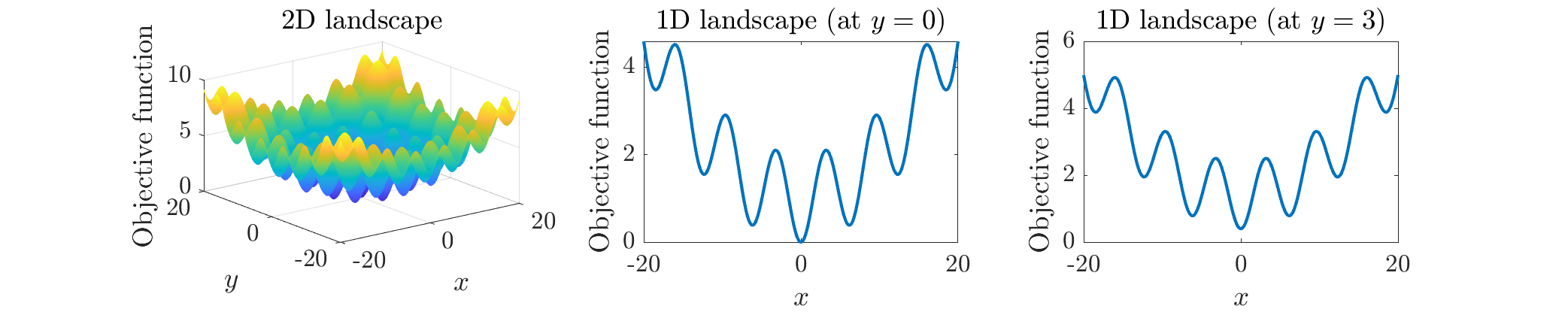}  \label{fig:c001}}
\caption{Optimization landscape of $J_1$ for $a=b=1$, $c = 0.05, 0.01$.}\label{fig:J1_c}
\end{figure}

\begin{figure}
    \centering
    \includegraphics[width=0.8\textwidth]{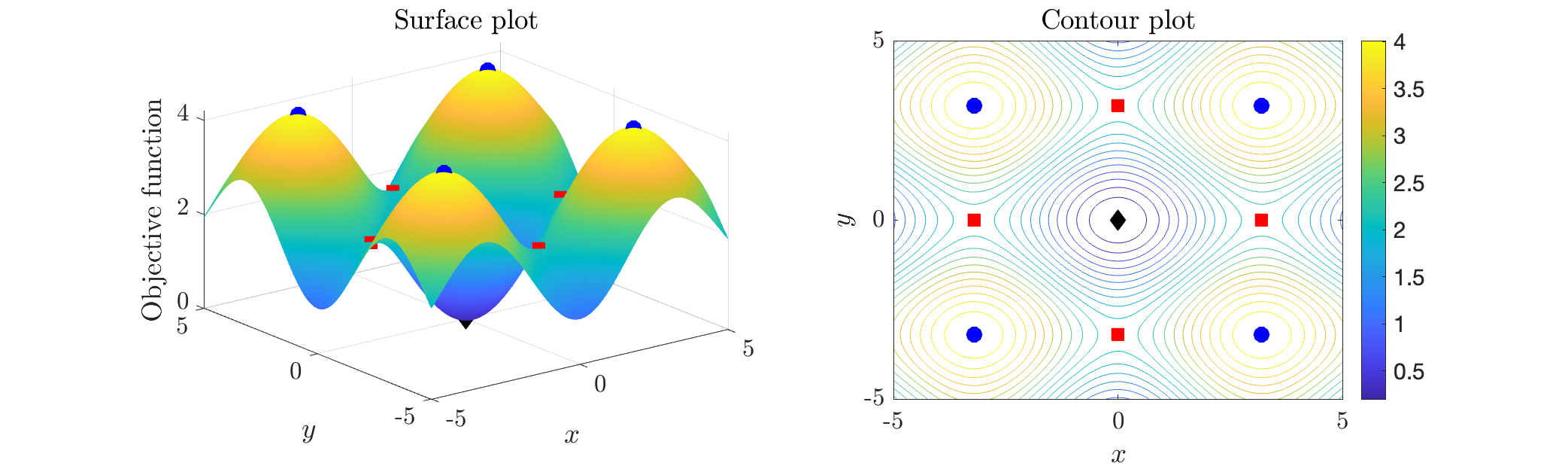}
    \caption{The zoom-in illustration of $J_1$ for $a=b=1, c = 0.01$ on the domain $[-5,5]^2$. The red squares represent the saddle points, the blue circles highlight the local minima and the black diamond marks the global minimizer.}
    \label{fig:saddle}
\end{figure}

In this subsection, we present a numerical optimization example to show the quantitative convergence of our proposed algorithm for finding the global minimizer of a highly nonconvex function. In the following set of tests, we consider a variation of the so-called Rastrigin function~\cite{rastrigin1974systems} as our objective function $J_1(\mathbf x)$ where $\mathbf{x} = (x_1,x_2,\ldots,x_d)^\top \in \R^d$.
\begin{equation} \label{eq:obj}
    J_1(\mathbf x) = a \left(d- \sum_{i=1}^d\cos(b x_1) \right)  + c\sum_{i=1}^d  x_i^2.
\end{equation}
When $c=0$, all local minima of $J_1$ are global minima. When $c>0$, $J_1$ has a unique global minimizer $x^* = (0,0)$ and infinitely many local minima and saddle points in $\R^d$. %
In terms of numerical implementation, we truncate our search domain $\Omega$ to be $[-20,20]^d$.

\begin{figure}
    \centering
    \includegraphics[width=0.95\textwidth]{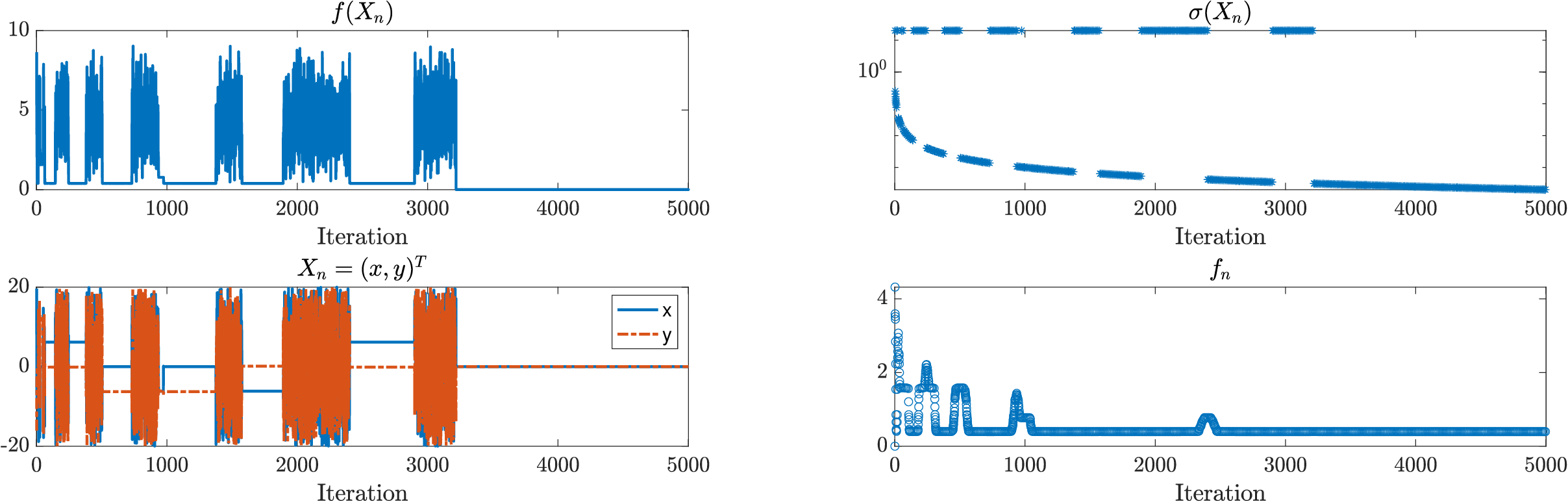}
    \caption{Convergence history of minimizing $J_1$ for $c= 0.01$ with the state-dependent noise following~\eqref{eq:test1_sigma}: the objective function value (top left), the standard deviation of the iterate (top right), the location of the iterates (bottom left), and the cutoff value that decides the sublevel set, i.e., the low-variance region (bottom right).}
    \label{fig:obj1D2-converge-single}
\end{figure}

\subsubsection{Two-Dimensional Objective Function} \label{sec:example_2_1}

First, we consider $d=2$. Fixing $a= b = 1$, we focus on two cases: $c = 0.05$ and $c = 0.01$. The search domain $\Omega = [-20,20]^2$; see~\Cref{fig:J1_c} for illustrations of the  optimization landscapes. In~\Cref{fig:saddle}, we highlight the global minimizer together with local minima and saddle points in the subset $[-5,5]^2$ for $c = 0.01$.

We use the proposed SGD algorithm to minimize $J_1$, which mainly follows~\eqref{eq:diffdisc1}. In terms of practical implementation, hyperparameters of the proposed algorithm could be treated as tunable parameters, like in many other optimization algorithms. Here, we set $\eta = 1$, ${\sigma}_0 = 1$ and $\widetilde{\sigma}_0 = 20$. 
We remark that the most important elements of the proposed algorithm are the state-dependent standard deviation $\sigma_n(f(x))$ of the noise term and the cutoff value $f_n$. We set
\begin{equation}\label{eq:test1_sigma}
    \sigma_n(f(x)) = \begin{cases} 
         {\sigma}_0 n^{-\alpha}, & f(x) <f_n,\\
         \widetilde{\sigma}_0,     & f(x)\geq f_n,
    \end{cases}
\end{equation}
where the cutoff value $f_n$ is the median ($0.5$ quantile) of the objective function values from the first $n$ iterations, $\{J_1(X_0), J_1(X_1), \dots, J_1(X_n)\}$. In this 2D example, we choose $\alpha = 1$. Without further information about $J(x)$, one could treat $\alpha$ as a tunable parameter. Due to the state-dependent bias imposed by~\eqref{eq:test1_sigma}, $\{X_n\}$ are more likely to stay at regions with small objective function values.

We present the convergence history of a single run of the algorithm in~\Cref{fig:obj1D2-converge-single} for the case $c= 0.01$. In \Cref{fig:obj1D2-converge}, we summarize the estimated probability of convergence with respect to the number of iterations and compare between the classical method~\eqref{eq:diffdisc} where $\sigma_n = 1/\sqrt{\log n}$ and our proposed method~\eqref{eq:diffdisc_new} where $\sigma_n$ is defined through~\eqref{eq:test1_sigma}. One can observe that the iterate $X_n$ converges to the global minimum $x^*$ in probability for both $c=0.01$ and $c=0.05$ using our proposed method, while with the classical method, there is no visible convergence. 
Quantitatively, using our proposed method, $\Prob(|X_n-x^*| < 0.01) \approx 1.000$ for $c=0.05$ and $\Prob(|X_n-x^*| < 0.01) \approx 0.997$ for $c=0.01$, while for the classical method, $\Prob(|X_n-x^*| < 0.01) \leq 1\%$ for both cases after $5\times 10^3$ iterations. The statistics are estimated by $10^3$ independent runs with the starting point $X_0$ uniformly sampled from the domain $\Omega$.

It takes more iterations for the case $c= 0.01$ than the case $c=0.05$ as there are more local minima and saddle points within the domain  $\Omega$ as $c$ decreases. Besides, the difference in the objective function value between the global minimum and the closest local minimum becomes smaller. Also, the difference in objective function value between the neighboring local minimum and maximum is much larger. The latter difference is often referred to as the energy barrier, commonly used in the context of phase transition and chemical reactions~\cite{ensing2005recipe}. Using our algorithm to induce phase transition in chemistry, thermodynamics, and other related fields could be one potential application of interest~\cite{weinan2006towards}.

\subsubsection{Ten-Dimensional Objective Function}\label{sec:example1_10D}
Next, we consider minimizing $J_1$ defined in~\eqref{eq:obj} for the case $d=10$. The number of local minima and saddle points in the search domain $\Omega = [-20,20]^d$ has increased exponentially in terms of $d$, demonstrating the curse of dimensionality in large-scale nonconvex optimization problems. We address that the basin of attraction is extremely small as it is approximately $10^{-10}$ of the entire search domain. In this 10D example, we consider $c = 0.03$ and $c= 0.05$ while fixing $a=b=1$ as before. Similar to the 2D case, we set $\eta = 1$, ${\sigma}_0 = 1$, and $\widetilde{\sigma}_0 = 20$. There is a slight difference in the setting of ${\sigma}_n$ due to a more complex optimization landscape for $d=10$: we choose $\alpha = 0.5$ in~\eqref{eq:test1_sigma} instead of $\alpha=1$ in the 2D case. The slower decay in ${\sigma}_n$ is to ensure that the iterates can visit the global basin of attraction before the variance becomes extremely small such that it becomes harder for the iterates to escape a local minimum. It also illustrates the flexibility by considering $\alpha$ as a tunable parameter in practical implementations. The cutoff value $f_n$ is still the median ($0.5$-quantile) of the set $\{ J_1(X_0), J_1(X_1), \dots, J_1(X_n)\}$.

The convergence histories of different setups using both the proposed method and the classical method are shown in~\Cref{fig:obj10D-converge}. One can observe the noticeably faster convergence of the proposed method as a result of the state-dependent variance. Compared to the 2D case, the 10D case requires more iterations to satisfy $\Prob(|X_n-x^*| < 0.01)$ due to the more complicated optimization landscapes of $J_1$ in higher dimensions. One can still observe the almost exponential convergence. Quantitatively, using our proposed method, $\Prob(|X_n-x^*| < 0.01) \approx 1.000$ for $c=0.05$ and $\Prob(|X_n-x^*| < 0.01) \approx 0.992$ for $c=0.03$, while for the classical method, $\Prob(|X_n-x^*| < 0.01) \approx 0$ in both cases within $10^5$ iterations. Again, the statistics are estimated by $10^3$ independent runs with uniformly sampled starting points. If we use uniform sampling to find an initial guess in the global basin of attraction, we may need approximately $\left(\frac{7}{40} \right)^{-10} \approx 3.7\times 10^7 $ different samples for the case $c=0.05$ in which the global basin of attraction is a subset of the hyper-cube $[-3.5, 3.5]^{10}$, while our domain is $[-20,20]^{10}$.

\begin{figure}
    \centering
    \includegraphics[width=0.8\textwidth]{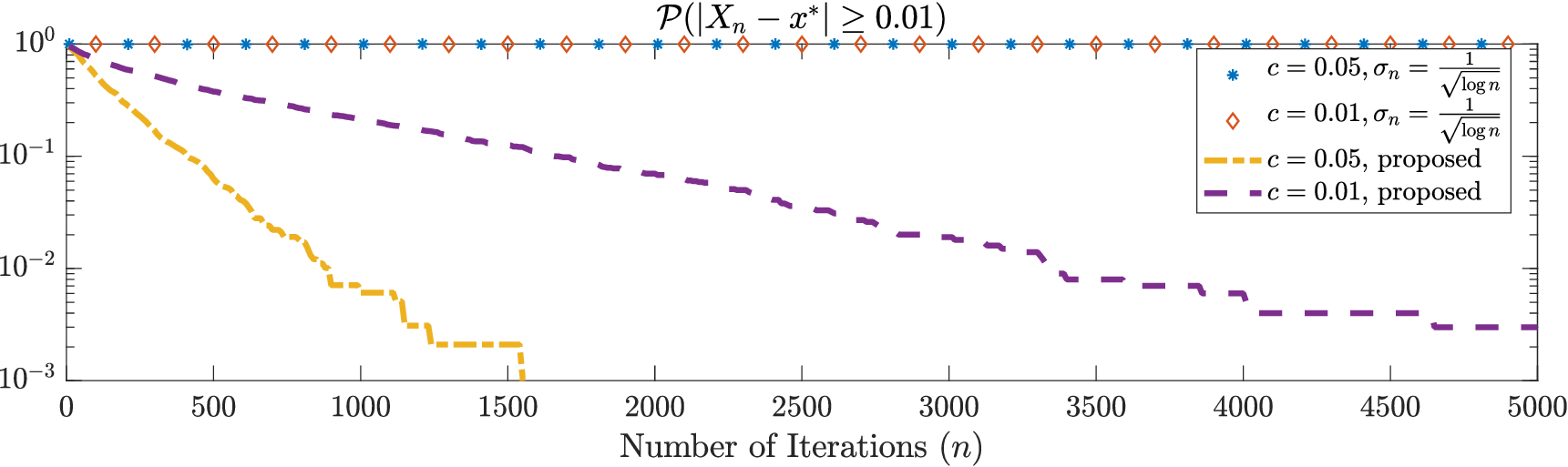}
    \caption{2D global optimization: estimation of $\Prob(|X_n-x^*|\geq 0.01)$ using $10^3$ independent runs where $X_n$ follows the classical method~\eqref{eq:diffdisc} and the proposed SGD algorithm~\eqref{eq:diffdisc_new} with variance defined in~\eqref{eq:test1_sigma}, respectively. The objective function is $J_1$ defined in~\eqref{eq:obj} with $d=2$ and $c=0.05$ or $c=0.01$.}
    \label{fig:obj1D2-converge}
\end{figure}

\begin{figure}
    \centering
    \includegraphics[width=0.8\textwidth]{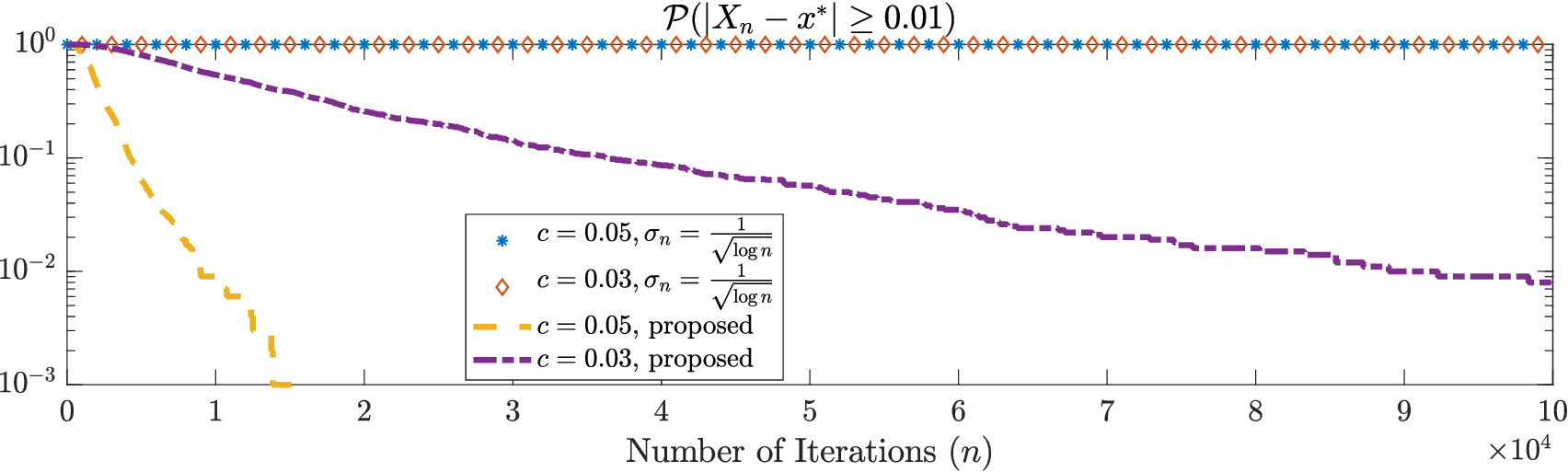}
    \caption{10D global optimization: estimation of $\Prob(|X_n-x^*|\geq 0.01)$ using $10^3$ independent runs where $X_n$ follows the classical method~\eqref{eq:diffdisc} and the proposed SGD algorithm~\eqref{eq:diffdisc_new} with variance defined in~\eqref{eq:test1_sigma}, respectively. The objective function is $J_1$ defined in~\eqref{eq:obj} with $d=10$ and $c=0.05$ or $c=0.03$.}
    \label{fig:obj10D-converge}
\end{figure}

\subsection{State-Dependent Variance Techniques Applied to Other Stochastic Descent Problems}\label{sec:example2}
This subsection presents two numerical examples based on more general settings than the one discussed earlier in this paper. One deals with stochastic objective functions as mentioned on the connection to formula~\eqref{eq:MLSGD101}—the other concerns gradient-free optimization. The promising numerical results demonstrate the power of adaptive state-dependent variance and indicate the potential for generalizing this work.

\subsubsection{Sampling of the Objective Function}\label{sec:ML_setup}
One very important application of stochastic gradient descent is machine learning. The random component is then not decoupled from the objective or loss function. Although this was not the setting proved in~\Cref{sec:proof}, we present a simple numerical example below showing the favorable outcomes in the case of sampling. 

As we have introduced in~\Cref{sec:intro}, the stochastic gradient descent method used in machine learning applications often has the form of~\eqref{eq:MLSGD101} where the variance is encoded in the estimated gradient $G(X_n;\xi)$ and $\xi$ is a random variable. For example, we are interested in finding the global minimum of the following optimization problem
\begin{equation}\label{eq:batch}
    \min_{x} \mathbb{E}_{\zeta}[J(x;\zeta)] \approx \frac{1}{M}\sum_{i=1}^M J(x;\zeta_i) = \widetilde{J}(x),
\end{equation}
where $M$ is the size of the training set (or sampled data) and $\{\zeta_i\}$ are i.i.d.~samples following the distribution of $\zeta$. For each $i$, the gradient $G(x;\zeta_i) = \nabla_x J(x;\zeta_i)$. In this case, we only have access to an approximated loss function $\widetilde J(x)$ and an estimated gradient
\begin{equation*}
    G(x;\xi_M)  = \frac{1}{M}\sum_{i=1}^M G(x;\zeta_i)  = \mathbb{E}_{\zeta} [ G(x;\zeta)] + \xi_M = G(x) + \xi_M,
\end{equation*}
where the random variable $\xi_M$ characterizes the random error in the gradient estimation. The relationship between $\zeta$ and $\xi_M$ depends on the concrete form of $J(x;\zeta)$. Thus,~\eqref{eq:MLSGD101} becomes
\begin{equation*}
    X_{n+1} = X_n - \eta_n \mathbb{E}_{\zeta} [ G(X_n;\zeta)]  + \eta_n \xi_M =  X_n - \eta_n G(X_n)  + \eta_n \xi_M,
\end{equation*}
where the noise term $\eta_n \xi_M$ is analogous to the $\sigma_n \psi_n$ term in~\eqref{eq:diffdisc_new}, which we wish to control in our proposed algorithm. As discussed in~\cite{hu2019diffusion}, changing the batch size $M$ is an effective way to control the power of the noise term given a fixed step size (see~\ref{itm:B1}).

\begin{figure}
    \centering
    \includegraphics[width=0.8\textwidth]{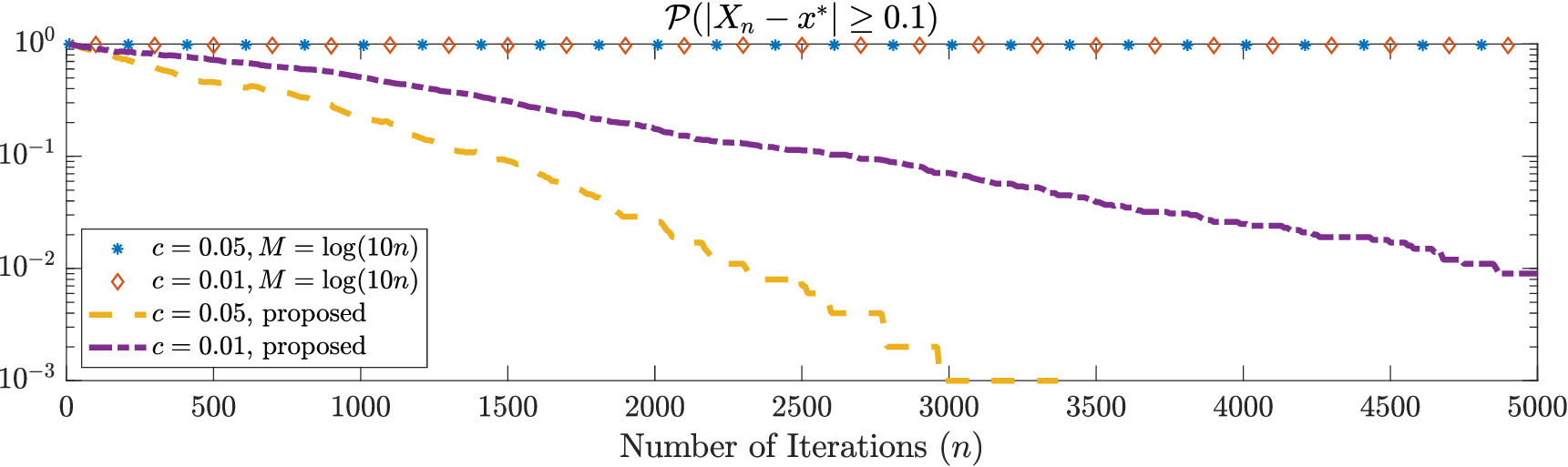}
    \caption{2D global optimization where the loss function is estimated by sampling (see~\Cref{sec:ML_setup}): the semi-log plots of $\Prob(|X_n-x^*|\geq 0.1)$ using the classical method~\eqref{eq:diffdisc} (batch size version) and the proposed algorithm~\eqref{eq:diffdisc2} with a state-dependent batch size to optimize $J_2$ defined in~\eqref{eq:J2} where $a=b=1$, $d=2$, $c=0.01$ and $c=0.05$.}
    \label{fig:obj2D-Batch-converge}
\end{figure}

Next, we examine our proposed algorithm~\eqref{eq:diffdisc1} with the control over the variance implicitly done through the adaptive batch size $M_n$. Consider 
\begin{equation}\label{eq:J2}
    J_2(\mathbf x;\zeta) = J_1(\mathbf x) + \zeta^1 \sum_{i=1}^d \sin(10 x_i) + \zeta^2 \sum_{i=1}^d \cos(10 x_i),\quad \zeta = [\zeta^1, \zeta^2]^\top,
\end{equation}
where $J_1$ is defined in~\eqref{eq:obj}, $d=2$ and $\zeta^1,\zeta^2\sim \mathcal{N}(0,0.5)$. Again, we fix $a=b=1$ in $J_1$ and then test cases where $c=0.01$ and $0.05$. Note that $J_1(\mathbf x) = \mathbb{E}_\zeta [J_2(\mathbf x;\zeta)]$. We modify our proposed algorithm in the following way in terms of the batch size $M_n$:
\begin{equation}\label{eq:diffdisc2}
    X_{n+1} = \begin{cases}  X_n -  \frac{\eta_n}{M_n}\sum_{i=1}^{M_n} G(X_n;\zeta_i)  , & \frac{1}{M_n}\sum_{i=1}^{M_n}J_2(X_n;\zeta_i) <f_n,\\
    \phi_{n}, &\frac{1}{M_n}\sum_{i=1}^{M_n}J_2(X_n;\zeta_i)  \geq f_n,\end{cases}
\end{equation}
where $M_n\in\N$ changes adaptively with $n$.  Other notations follow earlier definitions in~\eqref{eq:diffdisc1}. To mimic the variance decay in the classical method~\eqref{eq:diffdisc}, the batch size should follow $M_n = \mathcal{O}(\log n)$~\cite{hu2019diffusion}. For our proposed method~\eqref{eq:diffdisc1}, we may choose $M_n = \mathcal{O}(n^{2\alpha})$ where $\alpha$ is the constant in~\ref{itm:B3}, but could be regarded as a tunable parameter in practical implementations. For the numerical test shown in~\Cref{fig:obj2D-Batch-converge}, we choose $M_n = \log (10 n)$ for the classical method and $M_n = n$ for our proposed algorithm, with other parameters following the same setup as in Section~\ref{sec:example_2_1}.

In~\Cref{fig:obj2D-Batch-converge}, the proposed algorithm~\eqref{eq:diffdisc2} yields an exponential convergence. As expected, the case $c=0.05$ still converges faster than the case $c=0.01$. On the other hand, despite the theoretical guarantee of global convergence, the classical $M_n=\mathcal{O} (\log n)$ decreases the variance very slowly and is thus unable to give any visible convergence within $5\times 10^3$ iterations. Comparing with~\Cref{fig:obj1D2-converge}, the batch size-based setup yields slower convergence because not only the noise is no longer isotropic Gaussian as a result of its coupling with the gradient formulation, but also the loss function value now becomes an estimation $\widetilde{J}(X_n) = \frac{1}{M_n}\sum_{i=1}^{M_n}J_2(X_n;\zeta_i)$. An important aspect of our proposed algorithm is to use the loss function value as an indicator to decide if $X_n\in\Omega_n$.  In this example, the iterates are thus affected by the random error between $J(X_n)$ and $\widetilde{J}(X_n)$, particularly so when close to the ground truth $x^*$. A slower decay rate of $f_n$ is preferable for such situations.

\subsubsection{Gradient-Free Optimization}\label{sec:grad_free}

In another study~\cite{engquist2023adaptive}, we delved deeply into the scenario where the gradient term in~\eqref{eq:diffdisc2} is omitted and analyzed its convergence characteristics when the state-dependent diffusion coefficient is appropriately controlled. In this section, we present a straightforward example to numerically showcase how the state-dependent diffusion by itself can effectively concentrate over the global minimum with the goal of achieving global optimization.

We consider a modified algorithm compared to~\eqref{eq:diffdisc_new} where the gradient component is removed,
\begin{equation}\label{eq:SDE_nograd}
    X_{n+1} = X_n + \sigma_n(f(X_n)) \psi_n,
\end{equation}
and enforce the periodic boundary condition for iterates that exit the domain $\Omega = [0,4]$. Let $\sigma_n(f(x))$ be a  piecewise-constant function where
\begin{equation}\label{eq:diff_piece}
    \sigma_n(f(x)) = \begin{cases}
 s, & x\in \widetilde \Omega,\\
1/s, &\text{otherwise.}
    \end{cases}
\end{equation}
for some $s>0$. Here, $\widetilde \Omega$ mimics the low-variance sublevel set $\Omega_n$ in earlier parts of the paper.

\begin{figure}
    \centering
    \includegraphics[width=\textwidth]{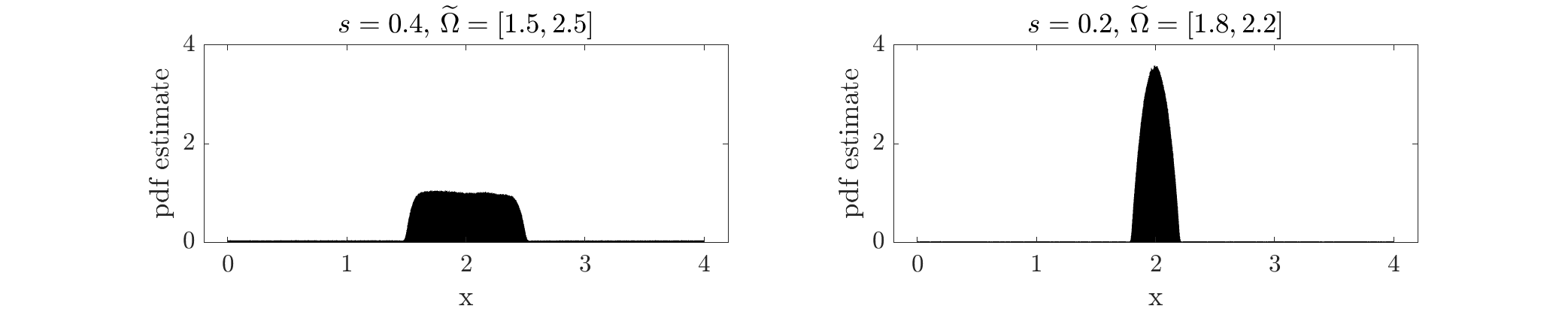}
    \caption{Approximated density function of the invariant measure for~\eqref{eq:SDE_nograd} where the noise term follows~\eqref{eq:diff_piece}. Left: $s=0.4$, $\widetilde \Omega = [1.5,2.5]$; Right: $s=0.2$, $\widetilde \Omega = [1.8,2.2]$.}
    \label{fig:all-s}
\end{figure}

Starting with $X_0=1$, we compute $N=2\times 10^7$ iterations.
We approximate the occupation measure 
\[
    \mu_{X_0,T}(B)
    =\frac{1}{N} \sum_{n=1}^N \mathbbm{1}_{X_n\in B},
\]
where $B\subset \R^d$ is any Borel measurable set and $\mathbbm{1}$ is the indicator function. Under mild conditions, $\mu_{X_0,T}$ converges to the physical invariant measure $\mu^*$~\cite{young2002srb}. We estimate $\mu^*$ using $N$ iterates; see~\Cref{fig:all-s} for the estimated probability density functions based on two setups for the variance.

When $s=0.4$ and $\widetilde \Omega = [1.5, 2.5]$, the stationary distribution concentrates on the small-variance region $\widetilde \Omega$ while the density becomes more concentrated for the case of $s=0.2$ and $\widetilde \Omega = [1.8, 2.2]$, as seen in~\Cref{fig:all-s}. %
A comparison between the two plots shows the evident impact of the state-dependent stochasticity based on which the iterates are more likely to stay within the region of small variance. 

In our proposed SGD algorithm~\eqref{eq:diffdisc_new} with the variance~\eqref{eq:two-stage-sigma}, in particular, algorithm~\eqref{eq:diffdisc1}, we utilize such property to align the small-variance region with the sublevel set of the objective function, which we aim to find the global minimizer. On the one hand, the iterate is more likely to stay within the sublevel set due to the small variance. On the other hand, as the iteration continues, the small-variance region shrinks as we decrease the cutoff value that decides the sublevel set. Finally, the iterate converges to the global minimizer in probability due to both aspects under properly chosen decay rates.

\subsection{Estimating \texorpdfstring{$\Omega^*(\fkf)$}{}}
\label{sec:estimate_fn}

\begin{figure}
    \centering
\subfloat[Estimation of the relation between $f_n$ and $|\Omega_n|$ using i.i.d. samples]{\includegraphics[width=0.8\textwidth]{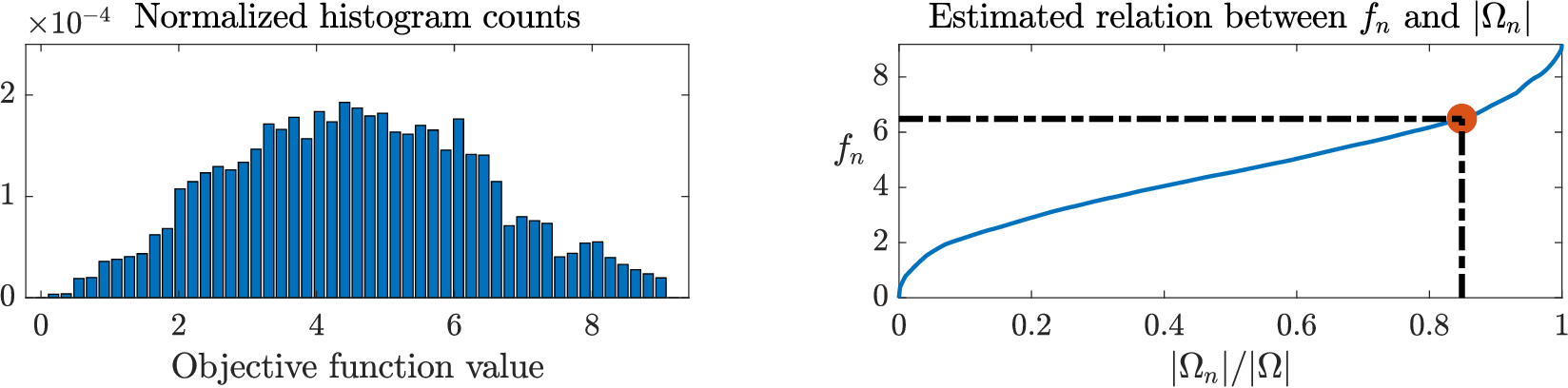}\label{fig:fniid}}\\
  \subfloat[Estimation of the relation between $f_n$ and $
 |\Omega_n|$ using high-variance iterates]{\includegraphics[width=0.8\textwidth]{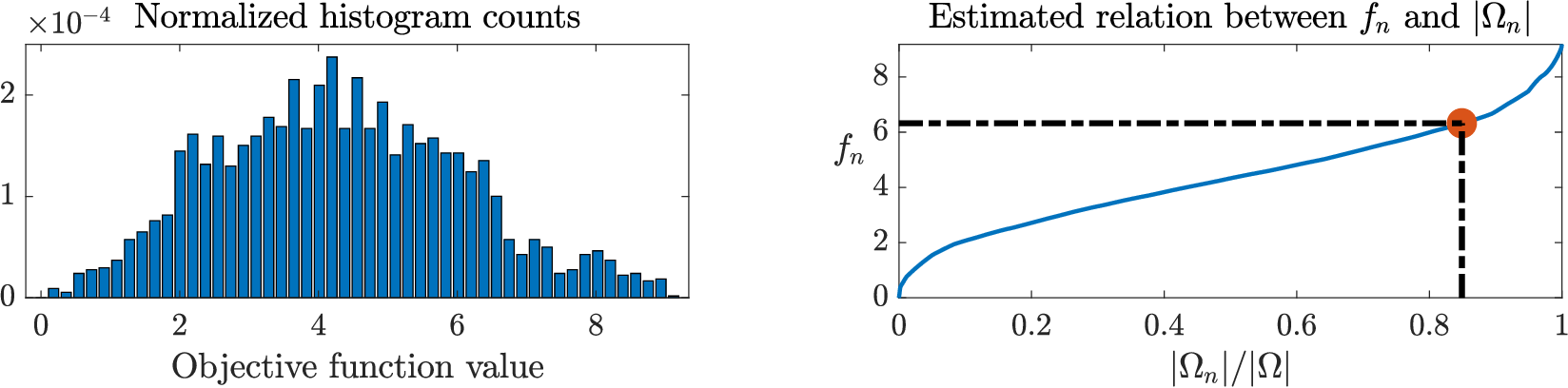}\label{fig:fnXn}}\\
  \subfloat[The comparison between the two estimations]{\includegraphics[width=0.8\textwidth]{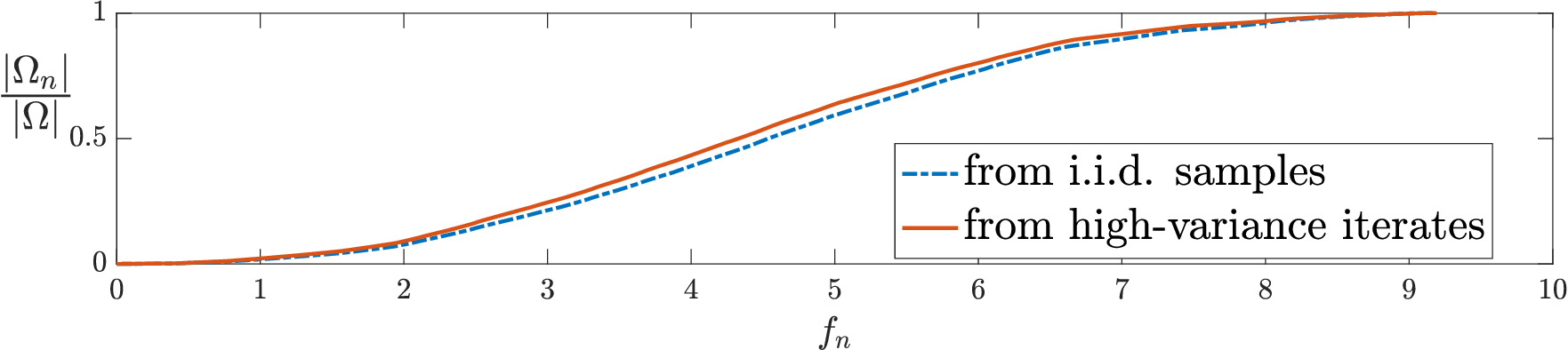}\label{fig:cdf-compare}}
    \caption{(a)~Estimation of the relation between $f_n$ and $
 |\Omega_n|$ using i.i.d. samples; (b)~Estimation of the relation between $f_n$ and $
 |\Omega_n|$ using high-variance iterates; (c)~A pointwise comparison between the two estimations of $|\Omega_n|$ as a monotonically increasing function of $f_n$.}
    \label{fig:fn_Omega_n}
\end{figure}

We illustrate the effectiveness of using high-variance samples to estimate $\Omega^*(\fkf)$ through numerical examples. This corresponds to the earlier discussion in Section~\ref{sec:omega_n_est}. Recall that $\Omega^*: \R \mapsto \R^+$ and $\Omega^*(\fkf) = |\{x\in \Omega|f(x)\leq \fkf \}|$. It is a function that maps the cutoff objective function value $f$ to the volume of the $\fkf$-sublevel set of the objective function $f(x)$. Again, we use $J_1$ in~\eqref{eq:obj} as our objective function by setting $a=b=1, c=0.01, d=2$. In~\Cref{fig:fniid}, we draw $N = 1\times 10^9$ i.i.d.\ samples $\{Y_i\}$ to produce a baseline result. The left-hand side plot is a histogram of $\{J_1(Y_i)\}$, which we could interpret as the empirical probability density function (pdf) for the random variable $J_1(x)$ where $x\sim\mathcal{U}([-20,20]^2)$. 

The right-hand side plot in~\Cref{fig:fniid} is the corresponding estimated inverse cumulative distribution function (ICDF). In our case, the ICDF illustrates how the cutoff value $f_n$ depends on $|\Omega_n|/|\Omega|$. Given the approximated ICDF, one can use numerical interpolation to obtain a sequence of cutoff values $\{f_n\}$ such that the volume of the sublevel set $|\Omega_n|$ decreases following a desired rate.

Next, we use about $5.3\times 10^5$ high-variance iterates $\{X_{n_k+1}\}$ ($\sigma_{n_k}(f(X_{n_k})) = \widetilde{\sigma}_{0} = 20$), generated as part of the SGD algorithm, to estimate the relation between $f_n$ and $|\Omega_n|$. Note that we set $\widetilde{\sigma}_{0} = 20$ in~\Cref{eq:test1_sigma} for numerical illustration. We also remark that we do not need to store the iterates, but only their objective function values (real scalars). The similar two plots are shown in~\Cref{fig:fnXn}. For $|\Omega_n| = 0.85|\Omega|$, the estimated $f_n$ is $6.4855$ using the i.i.d.\ samples, and $6.3233$ using the high-variance iterates, as highlighted in both figures. A pointwise comparison of the estimated cumulative distribution function ($|\Omega_n|/|\Omega|$ as a function of $f_n$) based on the two approaches is presented in~\Cref{fig:cdf-compare}.

\section{Conclusion and Future Directions}\label{sec:conclusion}

In this paper, we have proposed AdaVar, a stochastic gradient algorithm~\eqref{eq:diffdisc_new} where the variance of the noise term is adaptive and state-dependent, with the goal of finding the global minimum of a nonconvex optimization problem. The algorithm is proven to have an algebraic rate of convergence in~\Cref{sec:proof} under quite general assumptions, and this is a substantial improvement over the current standard logarithmic rate. The main component of the proposed algorithm is to allow adaptivity of the variance so that it changes not only in time but also in the location of the iterates so that a stronger noise is imposed when the iterate yields a larger objective function value; see~\eqref{eq:two-stage-sigma} for an example. If we know enough information about the objective function, the proposed algorithm costs the same as the classical method of stochastic gradient descent.

The techniques for accurately tuning parameters in the algorithms add very little to the overall computation cost. Several numerical examples presented in~\Cref{sec:numerics} further demonstrate the efficiency of the algorithm. 

There are quite a few future directions along the line of algorithms with state-dependent variance, which we here call AdaVar. Let us remark on the two briefly introduced and tested examples in~\Cref{sec:example2}. First is the application to stochastic objective functions in~\Cref{sec:ML_setup}. As mentioned in~\Cref{sec:intro}, the focus of this paper is on minimizing a given deterministic function and then adding a controlled stochastic component to achieve global convergence. A natural extension is the case where the stochastic component is directly linked to the objective function. This can either be because the objective function is intrinsically stochastic or, as is common in machine learning, a random sampling step is added to reduce the computational cost. There are two new challenges in this regime. One is the reduced control of the variance. In the machine learning case, natural tools include adjusting the batch size and controlling the adaptive step size, or the so-called learning rate. The other challenge is that only sampled values of the objective function are known. The sampling values can still be used, or different forms of aggregations or averaging could potentially be more representative.

Second, as illustrated in~\Cref{sec:grad_free}, the steady-state distribution of a pure diffusion without the gradient term is also subject to the adaptivity of the noise variance as a function of the location. Therefore, it is possible to design a zeroth-order stochastic descent method for global optimization while embedding the information of the objective function into the noise term. In our latest research~\cite{engquist2023adaptive}, we focused on this area and demonstrated convergence in both spatial and probabilistic terms at an algebraic rate with simple derivative-free stochastic algorithms. This approach can be further probed in the context of machine learning, as highlighted in the previously mentioned future prospects.

Another direction is to replace the use of an estimated $\Omega_n$ in determining the cutoff value $f_n$ by using the escape rates from $\Omega_n$ to $\Omega_n^c$ in control-type algorithms. Finally, it is important to apply the algorithms in several realistic settings to adjust implementation details and to establish best practices.

\section*{Acknowledgment}
This work is partially supported by the National Science Foundation through grants DMS-1620396, DMS-1620473, DMS-1913129, DMS-1913309, DMS-2110895, DMS-1937254, DMS-2309802 and DMS-2409855, and by the Office of Naval Research through grant
N00014-24-1-2088.  Y.~Yang acknowledges support from Dr.~Max R\"ossler, the Walter Haefner Foundation, and the ETH Z\"urich Foundation.  This work was done in part while Y.~Yang was visiting the Simons Institute for the Theory of Computing in Fall 2021. The authors thank the referees for carefully reading our manuscript, providing comments and suggestions.

\bibliographystyle{plain} 
\bibliography{references}

\end{document}